\newcommand{\lemlab}[1]{\label{lemma:#1}}
\newcommand{\theolab}[1]{\label{theo:#1}}
\newcommand{\corlab}[1]{\label{cor:#1}}
\newcommand{\seclab}[1]{\label{section:#1}}
\newcommand{\proplab}[1]{\label{prop:#1}}
\newcommand{\conjlab}[1]{\label{conj:#1}}
\newcommand{\lemref}[1]{Lemma \ref{lemma:#1}}
\newcommand{\theoref}[1]{Theorem \ref{theo:#1}}
\newcommand{\cororef}[1]{Corollary \ref{cor:#1}}
\newcommand{\figref}[1]{Figure \ref{fig:#1}}
\renewcommand{\eqref}[1]{(\ref{eq:#1})}
\newcommand{\secref}[1]{Section \ref{section:#1}}
\newcommand{\propref}[1]{Proposition \ref{prop:#1}}
\newcommand{\conjref}[1]{Conjecture \ref{conj:#1}}
\newtheorem{cor}[lemma]{Corollary}
\newtheorem{conj}[lemma]{Conjecture}
\newtheorem{question}[lemma]{Question}
\def\lemmaD#1{
\begin{lemma}
\label{lem:#1}
}
\newcommand{\theoremD}[1]{
\begin{theorem}
\label{theorem:#1}
}
\newcommand{\factD}[1]{
\begin{fact}
\label{fact:#1}
}
\newcommand{\corD}[1]{
\begin{cor}
\label{cor:#1}
}
\newcommand{\card}[1]{\ensuremath{\left\vert #1 \right\vert}}
\newcommand{\iprod}[2]{\ensuremath{\left\langle {#1}, {#2}\right\rangle}}
\renewcommand{\vec}[1]{\mathbf{#1}}
\newcommand\R{\text{$\mathbb{R}$}}
\newcommand\Z{\text{$\mathbb{Z}$}}
\DeclareMathOperator{\HH}{H}
\DeclareMathOperator{\Aut}{Aut}
\newcommand{\Conf}{\mathcal{C}}
\newcommand{\Real}{\mathcal{R}}
\newcommand{\bgamma}{\bm{\gamma}}
\journal{}
\begin{document}

\begin{frontmatter}

\title{Generic combinatorial rigidity of periodic frameworks}

\author[HUJI]{Justin Malestein}
\ead{justinmalestein@math.huji.ac.il}
\author[FUB]{Louis Theran}
\ead{theran@math.fu-berlin.de}
\address[HUJI]{Math Department, Hebrew University, Jerusalem}
\address[FUB]{Institut für Mathematik, Freie Universität Berlin}

\begin{abstract}
We give a combinatorial characterization of generic minimal rigidity for \emph{planar periodic frameworks}.
The characterization is a true analogue of the Maxwell-Laman Theorem from rigidity theory: it is stated
in terms of a finite combinatorial object and the conditions are checkable by polynomial time
combinatorial algorithms.

To prove our rigidity theorem we introduce and develop \emph{periodic direction networks}
and \emph{$\Z^2$-graded-sparse colored graphs}.
\end{abstract}

\begin{keyword}
Combinatorial rigidity \sep matroids \sep periodic graphs
\end{keyword}

\end{frontmatter}

\section{Introduction}
A \emph{periodic framework} is an infinite planar structure, periodic with
respect to a lattice representing $\Z^2$,  made of \emph{fixed-length bars}
connected by joints with full rotational degrees of freedom; the allowed continuous motions are
those that preserve the lengths and connectivity of the bars, and the framework's $\Z^2$-symmetry.
A periodic framework is \emph{rigid} if the only allowed motions are Euclidean isometries, and
flexible otherwise.

The forced periodicity is a key feature of this model:
there are structures that are rigid with respect to
periodicity-preserving motions that are flexible if a larger class of motions is allowed.
What is \emph{not} required to be preserved is also noteworthy: the lattice is allowed to change as
the framework moves.

Formally, a periodic framework is given by a triple $(\tilde G, \varphi,\tilde{\bm{\ell}})$ where:
$\tilde{G}$ is a simple infinite graph; $\varphi$ is a free $\mathbb{Z}^2$-action on $\tilde{G}$ by automorphisms
such that the quotient is finite; and $\tilde{\bm{\ell}}=(\tilde{\ell_{ij}})$ assigns a
length to each edge of $\tilde G$.

A \emph{realization} $\tilde G(\vec p,\vec L)$ of a periodic framework $(\tilde G, \varphi,\tilde{\bm{\ell}})$
is defined to be a mapping $\vec p$ of the vertex set $V(\tilde G)$ into $\mathbb{R}^2$ and a representation
$\Z^2 \to \R^2$ encoded by a matrix $\vec L \in \R^{2 \times 2}$ (with $\R^2$ here viewed as translations) such that:
\begin{itemize}
\item the representation is equivariant with respect to the $\Z^2$-actions on $\tilde{G}$ and the plane; i.e.,
$\vec p_{\gamma\cdot i} = \vec p_i + \vec L\cdot \gamma$ for all $i\in V(\tilde{G})$ and
$\gamma\in \Z^2$.
\item The specified edge lengths are preserved by $\vec p$; i.e., $||\vec p_i - \vec p_j||=\tilde{\ell}_{ij}$ for all
edges $ij\in E(\tilde{G})$.
\end{itemize}
The reader should note that together these definitions imply that, to be realizable, an abstract periodic
framework must give the same length to each $\Z^2$-orbit of edges.

A realization
$\tilde G(\vec p,\vec L)$ is \emph{rigid} if the only allowed continuous motions of $\vec p$ and $\vec L$
that preserve the action $\varphi$ and the edge lengths are rigid motions of the plane and
\emph{flexible} otherwise.
If $\tilde G(\vec p,\vec L)$ is rigid but ceases to be so if any
$\mathbb{Z}^2$-orbit of edges in $\tilde G$ is removed it is minimally rigid.
These definitions of periodic frameworks and rigidity are from \cite{BS10}.
(See \secref{continuous} for complete details.)

\subsection{Main theorem}
The topic of this paper is to determine rigidity and flexibility of periodic frameworks based
only on the \emph{combinatorics} of a framework---i.e., which bars are present and not their specific
lengths.  In general, this isn't possible, and even testing rigidity of a \emph{finite} framework seems to
be a hard problem, with the best known algorithms relying on exponential-time Gröbner basis computations.

However, for \emph{generic} periodic frameworks, we give the following
combinatorial characterization, which is analogous to the landmark Maxwell-Laman Theorem \cite{M64,L70}.
The \emph{colored-Laman graphs} appearing in  statements of theorems are defined in \secref{tdngraphs}; the
quotient graph is defined in \secref{graphs}; and
genericity is defined precisely in  \secref{infinitesimal} in terms of the
coordinates of the points in a realization avoiding a nowhere-dense algebraic set.  In particular,
this means that the set non-generic realizations has measure zero.
\begin{theorem}[\periodiclaman]%
\theolab{periodiclaman}
Let $(\tilde G, \varphi,\tilde{\bm{\ell}})$ be a generic periodic framework.  Then a generic
realization $\tilde{G}(\vec p,\vec L)$ of $(\tilde G, \varphi,\tilde{\bm{\ell}})$ is
minimally rigid if and only if its colored quotient graph $(G,{\bm{\gamma}})$ is colored-Laman.
\end{theorem}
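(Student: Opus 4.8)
The plan is to follow the classical two-step template for Maxwell–Laman-type theorems—first establish a combinatorial matroidal characterization of the underlying count, then transfer it to the rigidity setting via a genericity argument—but adapted to the periodic (colored-graph) category. Concretely, I would decompose the statement into: (i) a \emph{linear representation} step, identifying the rigidity matrix of a generic periodic framework with the matrix of a more tractable linear system (periodic direction networks), and (ii) a \emph{combinatorial} step, showing that the colored-Laman condition on $(G,\bgamma)$ is exactly the condition for the direction-network system to have the expected rank. The colored-Laman graphs, by their definition in \secref{tdngraphs}, are the bases of a $\Z^2$-graded-sparsity matroid on colored graphs; so step (ii) amounts to proving that this matroid is the one realized by the direction-network matrix.

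For step (i), I would set up the periodic rigidity matrix associated to a realization $\tilde G(\vec p, \vec L)$, observing that its rows are indexed by $\Z^2$-orbits of edges of $\tilde G$ (equivalently edges of the quotient graph $(G,\bgamma)$) and its columns by the vertex coordinates of a fundamental domain together with the entries of $\vec L$. A periodic framework is infinitesimally rigid precisely when this finite matrix has corank equal to the dimension of the space of periodic trivial motions (translations of the plane, accounting correctly for the fact that $\vec L$ may change), and by a standard argument infinitesimal rigidity at a generic point is equivalent to rigidity. I would then replace each bar direction $\vec p_i - \vec p_j + \vec L \cdot \gamma_{ij}$ by a \emph{generic} direction vector, arriving at the periodic direction network operator; a Roth/Whiteley-style pinning or averaging argument shows the generic rank of the rigidity matrix equals the generic rank of the direction-network matrix, so it suffices to analyze the latter.

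For step (ii), I would prove that the periodic direction network on $(G,\bgamma)$ with generic directions has full rank (equal to the number of edges) if and only if $(G,\bgamma)$ is \emph{$\Z^2$-graded-sparse}, i.e., independent in the colored-sparsity matroid; combined with the correct edge count this yields colored-Laman exactly at minimal rigidity. The proof of this equivalence has two directions. Necessity (if some orbit-subgraph violates the sparsity bound, the direction network is dependent) is a direct dimension count: the realization space of that sub-network, modulo its own trivial motions, has too small dimension, forcing a linear dependence among its rows. Sufficiency (graded-sparse implies generic independence) is the crux: I would exhibit a single placement of directions—or a recursive/inductive construction following a Henneberg-type build sequence for colored-Laman graphs—on which the direction-network matrix attains full rank, whence genericity of the directions gives full rank on a dense open set. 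This is the step I expect to be the main obstacle, because the colored setting introduces the subtlety that the lattice matrix $\vec L$ varies: the ``trivial motions'' are lower-dimensional than in the finite case, so both the sparsity counts and the base cases of any inductive argument must be tuned to the graded (rather than uniform) matroid, and establishing that the graded-sparse colored graphs carry the matroid structure needed for the inductive moves to be available is itself nontrivial. Once step (i) and step (ii) are in place, the theorem follows by noting that minimal rigidity is rigidity plus the tight edge count $2|V(G)| + 1$ (the $+1$ reflecting the single extra lattice degree of freedom in the plane), which is precisely the rank condition characterizing bases of the colored-sparsity matroid, i.e., colored-Laman.
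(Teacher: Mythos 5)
Your overall architecture (linearize to the rigidity matrix, pass to direction networks, characterize the rank combinatorially) matches the paper's program, but there is a genuine gap at the joint between your steps (i) and (ii): you assert that the generic rank of the rigidity matrix equals the generic rank of the direction-network matrix, and that the latter is characterized by colored-Laman sparsity. Neither claim is correct. The direction-network matrix $\vec M_{2,2,2}(G,\bgamma)$ with fully generic entries represents the \emph{$(2,2,2)$-matroid}, whose counts are $m'\le 2n'-2+2\rk(G',\bgamma)-2(c'-1)$ --- one more edge per subgraph than colored-Laman permits. A colored graph with $2n+1$ edges can be independent in the $(2,2,2)$-matroid (so its direction-network matrix has full generic rank $2n+1$) while containing a colored-Laman circuit such as the two dependent self-loops of \figref{dependent-framework}; for such a graph the rigidity matrix is nonetheless rank-deficient. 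The reason is that the rigidity matrix is \emph{not} a generic instance of $\vec M_{2,2,2}$: it is the specialization in which each direction is forced to equal the actual edge vector $\vec p_j+\vec L\cdot\gamma_{ij}-\vec p_i$ of a point configuration, and this specialization drops rank precisely when some edge is collapsed. Consequently, exhibiting full rank of the direction-network matrix (your sufficiency step) produces no conclusion about the rigidity matrix unless you also produce a \emph{faithful} realization of the direction network to substitute in; and whether one exists is governed by the strictly finer colored-Laman condition, not by the rank of $\vec M_{2,2,2}$. This is exactly the content of \theoref{periodicparallel}, whose proof is where the real work lies: one shows that on a colored-Laman circuit every realization of a generic direction network is entirely collapsed (\lemref{collapse}), and that a collapsed edge behaves like a doubled edge (\lemref{collapsecontract}), so that the Lov\'asz--Yemini-style \lemref{periodiclamancontract} (doubling any edge of a colored-Laman graph yields a $(2,2,2)$-graph) rules out collapse. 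None of this machinery appears in your proposal.

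The same conflation undermines your necessity direction as written: a colored-Laman circuit is $(2,2,2)$-tight, so its direction-network rows are generically \emph{independent} and no ``direct dimension count'' on the direction network detects it. (A dimension count can be made to work if carried out on the framework realization space itself, with the trivial motions of a disconnected sub-framework enumerated as $2c'$ component translations, $4-2\rk$ directions of lattice slack, and one rotation --- which is exactly what the colored-Laman count encodes --- but that is a different argument from the one you describe, and the paper instead derives the dependency from the collapse lemma.) Finally, your fallback of a Henneberg-type inductive construction for colored-Laman graphs is not something the paper establishes or relies on; the sufficiency of the sparsity counts for the rank of $\vec M_{2,2,2}$ is proved instead via the decomposition of $(2,2,k)$-graphs into two edge-disjoint $(1,1,k)$-graphs (\lemref{222decomp}) and explicit Laplace-expansion determinant formulas, so you would be taking on an additional unproved combinatorial result.
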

\theoref{periodiclaman} is a true combinatorial characterization of generic periodic
rigidity in the plane: $(G,{\bm{\gamma}})$ is a finite combinatorial object and the colored-Laman
condition is checkable in polynomial time. The specialization of \theoref{periodiclaman}
to the case where the quotient graph has only one vertex is implied by \cite[Theorem 3.12]{BS10}.

\subsection{Examples}
Because infinite periodic graphs are unwieldy to work with, we will model periodic frameworks by
\emph{colored graphs}, which are finite directed graphs with elements of $\Z^2$ on the edges.
These are defined in \secref{graphs}, but we show some examples here to give intuition and motivate
\theoref{periodiclaman}.
\begin{figure}[htbp]
\centering
\subfigure[]{\includegraphics[width=.45\textwidth]{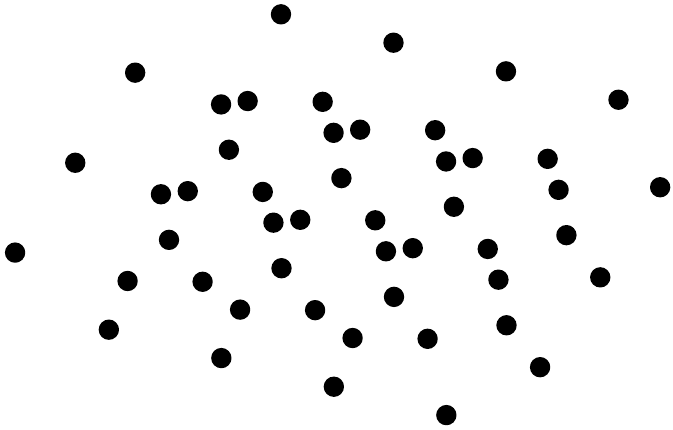}}
\subfigure[]{\includegraphics[width=.45\textwidth]{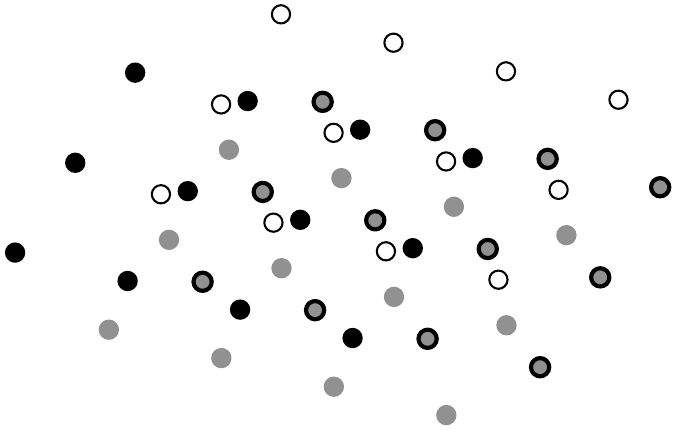}}
\subfigure[]{\includegraphics[width=.45\textwidth]{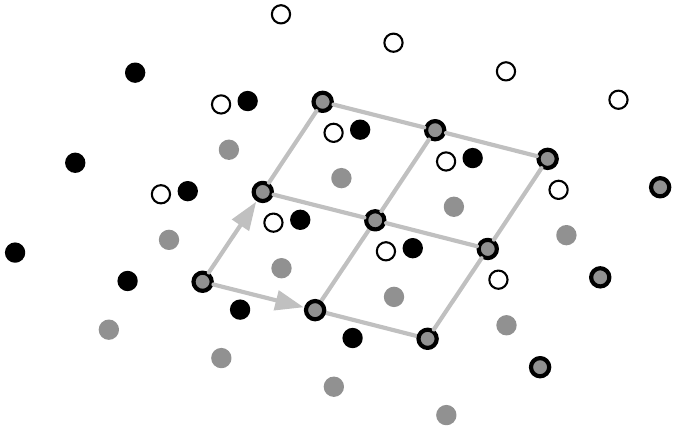}}
\caption{Periodic point sets: (a) part of a periodic point set;
(b) $\Z^2$-orbits of the points indicated by coloring; (c) the representation of $\Z^2$ is indicated by
gray vectors with arrows, copies of the ``unit cell'' shown in gray.}
\label{fig:points-example}
\end{figure}
\figref{points-example} (a) shows part of a periodic point set, (b) makes it more clear that it is indeed periodic by
indicating the $\Z^2$ orbits of the points; (c) indicates the vectors representing $\Z^2$ by translations and
shows several copies of the fundamental domain of the $\Z^2$-action on the plane (often called the unit cell).
\begin{figure}[htbp]
\centering
\subfigure[]{\includegraphics[width=.45\textwidth]{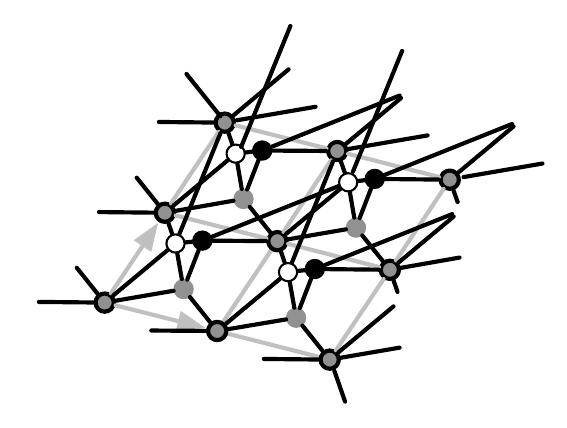}}
\subfigure[]{\includegraphics[width=.45\textwidth]{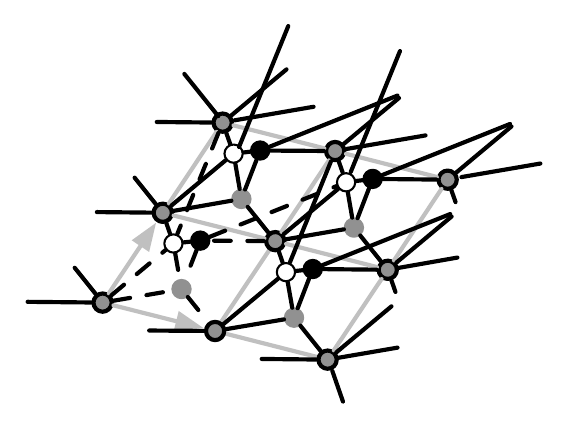}}
\subfigure[]{\includegraphics[width=.45\textwidth]{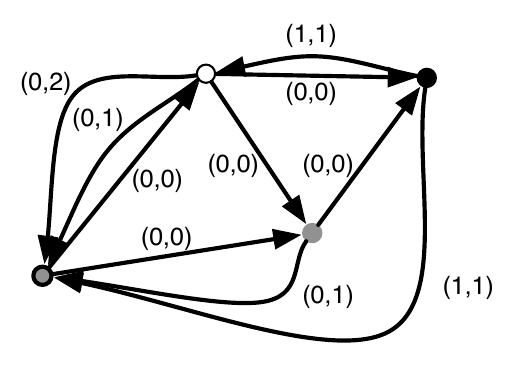}}
\subfigure[]{\includegraphics[width=.45\textwidth]{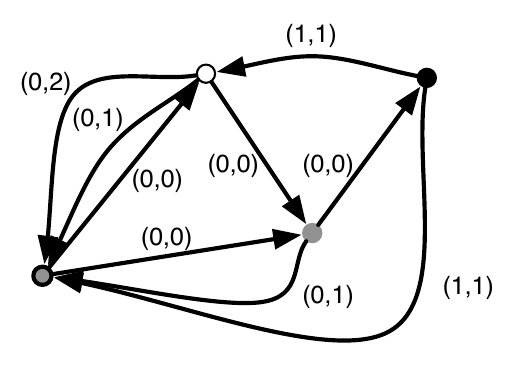}}
\caption{A periodic framework: (a) part of an infinite periodic framework;
(b) choosing representatives for the edges; (c) the associated colored graph; (d) a colored-Laman basis of the
colored graph in (c).}
\label{fig:framework-example}
\end{figure}
\figref{framework-example}~(a) shows an example of a periodic framework, and (b) and (c)
illustrate the construction from moving from a periodic framework to a colored graph, from which
the entire periodic graph formed by the bars of the framework can be reconstructed.   \secref{graphs}
describes the construction in detail.

The framework in \figref{framework-example} turns out to be generically rigid, and, in fact, over-constrained.
This is because the colored graph in \figref{framework-example} (c) has $n=4$ vertices and $m=10$
edges.  On the other hand, there are $12$ total variables defining the framework
($8$ from the coordinates of the points and $4$ from the representation of the lattice) and $3$ trivial degrees of
freedom (from Euclidean motions of the plane), so there can be only $9$ independent distance equations.
Since the graph in \figref{framework-example} (c) is somewhat complicated, seeing that the framework from
\figref{framework-example} (a) is rigid is most easily done via \theoref{periodiclaman}.
\figref{framework-example} (d) indicates a subgraph corresponding to a minimally rigid framework in gray;
this is what we call \emph{colored-Laman}.

\begin{figure}[htbp]
\centering
\subfigure[]{\includegraphics[width=.30\textwidth]{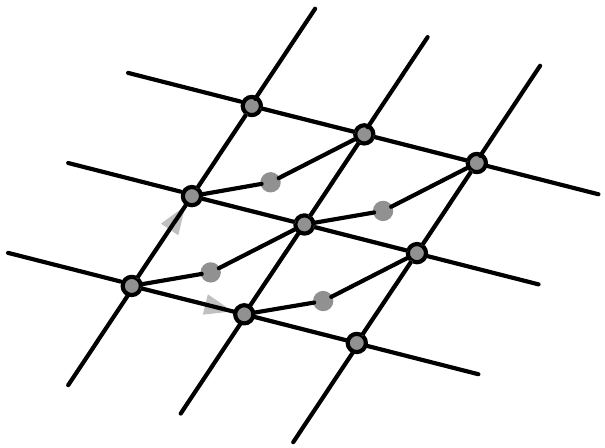}}
\subfigure[]{\includegraphics[width=.30\textwidth]{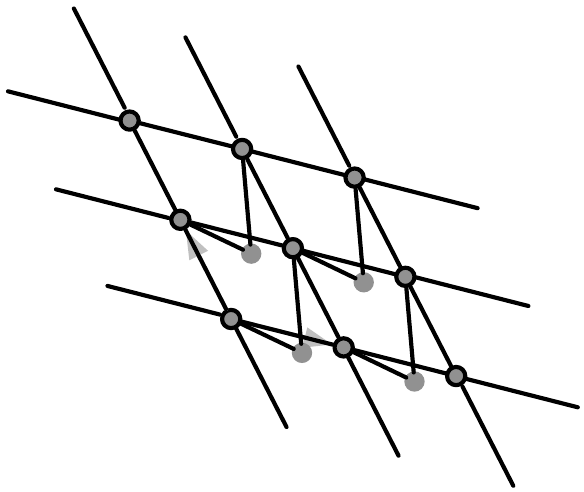}}
\subfigure[]{\includegraphics[width=.30\textwidth]{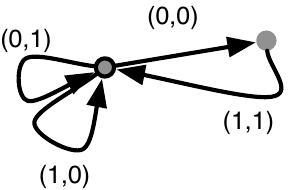}}
\caption{A flexible one degree of freedom periodic framework with a motion that changes the lattice
representation (a) and (b); its associated colored graph
(c).}
\label{fig:flex-framework}
\end{figure}
\figref{flex-framework} example shows an example of a very simple one degree of freedom framework (a) and its
associated colored graph (c).  Two things to note are: its flex (b) necessarily moves the representation of the
lattice; in this rigidity model \emph{all} $4$-regular colored graphs are associated with frameworks that,
generically, have at least one degree of freedom.

\begin{figure}[htbp]
\centering
\subfigure[]{\includegraphics[width=.45\textwidth]{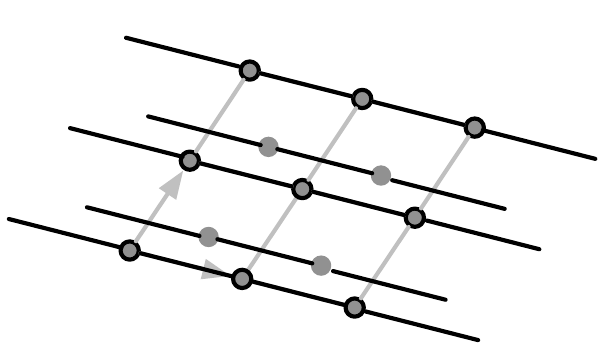}}
\subfigure[]{\includegraphics[width=.45\textwidth]{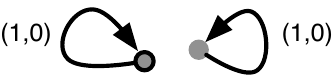}}
\caption{A dependent, but disconnected, periodic framework (a) and its associated colored graph
(b).}
\label{fig:dependent-framework}
\end{figure}
Our final example, in \figref{dependent-framework} is a simple illustration of a subtle
feature of the periodic rigidity model that the colored-Laman graphs we define capture: dependencies
between disconnected sub-frameworks and sub-graphs.  The framework in \figref{dependent-framework} (a)
consists of two disconnected orbits of bars, each of which is modeled by a self-loop in the associated
colored graph shown in \figref{dependent-framework} (b).  To be realizable at all, the
lengths of the bars all need to be exactly the same: in both cases, what is being restricted is just the length
of one of the lattice vectors.  It follows that, generically, this framework is over-constrained,
despite both it and its quotient being disconnected.
Accounting for the interactions between sub-frameworks that affect the
same parts of the lattice representation is a key aspect of the definition of colored-Laman graphs.

\subsection{Periodic direction networks}
A \emph{periodic direction network}
$(\tilde G, \varphi,\tilde{\vec d})$ is an infinite multigraph $\tilde G$ with a
free $\mathbb{Z}^2$-action $\varphi$ by authormorphisms and an assignment of directions
$\tilde{\vec d} = (\tilde{\vec d}_{ij})_{ij\in E(\tilde G)}$ to the edges of $\tilde G$.

A \emph{realization} $\tilde G(\vec p,\vec L)$
of a periodic direction network is a mapping $\vec p$ of the vertex set $V(\tilde G)$ into
$\mathbb{R}^2$ and a matrix $\vec L\in \R^{2\times 2}$ representing
$\mathbb{Z}^2$ by translations of $\mathbb{R}^2$ such that:
\begin{itemize}
\item The representation $\Z^2 \to \R^2$ from $\vec L$ is equivariant with respect to the actions on $\tilde{G}$ and the plane;
i.e., $\vec p_{\gamma\cdot i} = \vec p_i + \vec L\cdot \gamma$ for all $i\in V(\tilde{G})$ and
$\gamma\in \Z^2$.
\item The specified edge directions are preserved by $\vec p$; i.e.,
$\vec p_j-\vec p_i=\alpha_{ij}\tilde{\vec d}_{ij}$ for all edges $ij\in E(\tilde G)$ and some
$\alpha_{ij}\in \mathbb{R}$
\end{itemize}
An edge $ij$ is \emph{collapsed} in a realization
$\tilde G(\vec p)$ if $\vec p_i = \vec p_j$; a realization in which all edges are collapsed is defined to be
a \emph{collapsed realization}, and a realization in which no edges are collapsed is \emph{faithful}.  We prove an
analogue of Whiteley's Parallel Redrawing Theorem \cite{W96,W88}.
\begin{theorem}[\periodicparallel]
\theolab{periodicparallel}
Let $(\tilde G,\varphi,\tilde{\vec d})$ be a generic periodic direction network.
Then $(\tilde G,\varphi,\tilde{\vec d})$ has a unique, up to translation and scaling,
faithful realization if and only if its quotient graph $(G,{\bm{\gamma}})$ is colored-Laman.
\end{theorem}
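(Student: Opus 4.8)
The plan is to identify the realizations of $(\tilde G,\varphi,\tilde{\vec d})$ with the kernel of an explicit matrix and then match that matrix's generic rank to the colored-Laman combinatorics. Let $(G,\bm\gamma)$ be the quotient graph, with $n=\card{V(G)}$ and $m=\card{E(G)}$. Fixing orbit representatives, a realization is recorded by points $\vec p_1,\dots,\vec p_n\in\R^2$ and the $2\times2$ matrix $\vec L$, and the constraint contributed by an edge $ij$ of $G$ of color $\gamma_{ij}$ is that $\vec p_j-\vec p_i+\vec L\gamma_{ij}$ be parallel to $\tilde{\vec d}_{ij}$, i.e.\ $\langle\tilde{\vec d}_{ij}^{\perp},\vec p_j-\vec p_i+\vec L\gamma_{ij}\rangle=0$. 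This is one linear equation per edge, so the realizations form the kernel of an $m\times(2n+4)$ matrix $M=M(G,\bm\gamma,\tilde{\vec d})$. Independently of $\tilde{\vec d}$, $\ker M$ contains the $2$-dimensional space of translations ($\vec p_i\equiv\vec t$, $\vec L=0$; all collapsed) and the scaling line $(\vec p,\vec L)\mapsto(\lambda\vec p,\lambda\vec L)$, and whenever some realization is not collapsed these three directions are independent. Hence $\operatorname{corank}M\ge3$ always, and $(\tilde G,\varphi,\tilde{\vec d})$ has a realization unique up to translation and scaling exactly when $\operatorname{corank}M=3$; imposing in addition that this essentially unique realization be faithful, the theorem reduces to (i) deciding when the generic rank of $M$ equals $2n+1$ and (ii) a faithfulness argument.

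For the ``if'' direction, assume $(G,\bm\gamma)$ is colored-Laman, so $m=2n+1$; I must show that for generic $\tilde{\vec d}$ the matrix $M$ has full row rank $2n+1$ and that the resulting realization collapses no edge. Full row rank is the non-vanishing of a maximal minor of $M$, a polynomial in the entries of $\tilde{\vec d}$, so it suffices to exhibit one direction assignment with rank $2n+1$; equivalently, one shows that the row matroid of $M$ at generic $\tilde{\vec d}$ is exactly the $\Z^2$-graded sparsity matroid whose bases are the colored-Laman graphs. I would obtain this from the structure theory of colored-Laman graphs developed in the colored-graph sections, either by a recursive construction (building every colored-Laman graph from a minimal one by colored vertex additions and edge-splitting moves and checking that each move raises the generic rank by exactly the number of added edges, a local rank computation on the new rows for generic new directions) or by realizing the matroid directly with a special direction assignment---for instance, taking every direction horizontal or vertical block-diagonalizes $M$ into two copies of a periodic graphic matrix, reducing full rank to a matroid-union statement. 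Granting $\operatorname{rank}M=2n+1$, $\ker M$ is the span of the translations and one further realization $r_0$, and for each edge $e$ the subspace $B_e$ of kernel elements collapsing $e$ contains the translation subspace, so $B_e$ equals it or is all of $\ker M$; a generic kernel element is therefore faithful unless some $e$ is collapsed in every realization. Excluding this is the one delicate point: I would either carry faithfulness through the recursive construction above (each newly added vertex is placed at the intersection of two lines through its neighbors taken with generic new directions, hence generically at a point distinct from them, so no new edge collapses), or argue directly that on the realization space of $G\setminus e$ the edge vector $\vec p_j-\vec p_i+\vec L\gamma_e$ attains every direction, so that a generic direction on $e$ selects a realization in which $e$ is uncollapsed.

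For the ``only if'' direction, assume $(\tilde G,\varphi,\tilde{\vec d})$ has a faithful realization $r_0$, unique up to translation and scaling. Since $r_0$ is not collapsed, $r_0$ and the translations span a $3$-space in $\ker M$, so $\operatorname{corank}M\ge3$; and if $\operatorname{corank}M\ge4$ the faithful realizations, a nonempty Zariski-open subset of $\ker M$, would have more than one image modulo translations and scaling. Hence $\operatorname{corank}M=3$, $\operatorname{rank}M=2n+1$, and some set $H$ of $2n+1$ edges of $G$ has independent rows. The easy half of the matrix--matroid correspondence---a subgraph violating the colored-Laman sparsity counts has dependent rows, seen by a dimension count on the corresponding submatrix---shows $H$ is colored-Laman. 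It remains to rule out $m>2n+1$: if so, pick $e\in E(G)\setminus E(H)$. Since the $2n+1$ rows of $H$ are independent, $M$ and $M_H$ have the same rank and hence the same row space, so $\ker M=\ker M_H$; in particular $\ker M$ is independent of $\tilde{\vec d}_e$, so the edge vector across the endpoints of $e$ in the essentially unique realization is fixed (up to the global scaling) and independent of $\tilde{\vec d}_e$. That realization satisfies the constraint of $e$, so this fixed vector is parallel to $\tilde{\vec d}_e$; for generic $\tilde{\vec d}_e$ this forces it to vanish, i.e.\ $e$ is collapsed, contradicting faithfulness. Hence $m=2n+1$, $G=H$, and $(G,\bm\gamma)$ is colored-Laman.

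The main obstacle is the generic-full-rank claim for colored-Laman graphs in the ``if'' direction, together with the faithfulness step that rests on it: this is precisely where one needs the matroidal theory of $\Z^2$-graded-sparse colored graphs and a recursive (or matroid-union) description of colored-Laman graphs; the trivial-motion bookkeeping and the sparsity upper bound used in the ``only if'' direction are comparatively routine. Finally, I expect \theoref{periodiclaman} to follow from \theoref{periodicparallel} by the standard device relating the infinitesimal rigidity matrix of a generic periodic framework to the direction-network matrix of a generic periodic direction network on the same colored graph, so \theoref{periodicparallel} is the heart of the argument.
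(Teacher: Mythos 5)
Your overall architecture is right --- reduce to the finite linear system, observe that translations and scaling force corank at least $3$, and split the problem into a generic rank computation and a faithfulness argument --- and your second sketch for the rank step (a special direction assignment block-diagonalizing the matrix and reducing to matroid union) is essentially the paper's route, which runs a Laplace expansion of the maximal minors against the decomposition of a $(2,2,2)$-graph into two edge-disjoint $(1,1,2)$-graphs (\lemref{222decomp}). But the two points you flag as delicate are exactly where the proposal has genuine gaps. For faithfulness, neither of your proposed fixes is available as stated: no Henneberg-type inductive construction of colored-Laman graphs is established (and building one for this $\Z^2$-graded sparsity family would be a separate substantial project), and your alternative --- that the edge vector of $e$ sweeps out all directions on the realization space of $G\setminus e$ --- is equivalent to asserting that adding a second, independently directed copy of $e$ raises the rank to $2n+2$. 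Making that true is precisely the content of the combinatorial lemma you are missing: doubling \emph{any} edge of a colored-Laman graph yields a $(2,2,2)$-graph (\lemref{periodiclamancontract}, the colored analogue of Lov\'asz--Yemini/Recski), combined with the rank statement for $(2,2,2)$-graphs and the observation that a collapsed edge can be doubled without changing the solution space. Without identifying that lemma, the faithfulness step does not close.

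The ``only if'' direction contains an outright false step. You claim that a subgraph violating the colored-Laman counts has dependent rows ``by a dimension count,'' so that any $2n+1$ independent rows form a colored-Laman graph. This fails: a colored-Laman circuit satisfies $m'=2n'-2+2\operatorname{rk}_{\Z^2}-2(c'-1)$, i.e.\ it is $(2,2,2)$-tight, and its rows are generically \emph{independent} (this is \lemref{M2rank}); the example of two disconnected self-loops with the same color in \figref{dependent-framework} is exactly such a graph. The true obstruction is not rank deficiency but forced collapse: when the circuit's system has full rank, the space of its collapsed realizations (dimension $4-2k+2c$, \lemref{collapsed-dim}) already fills the entire solution space, so every realization collapses all of its edges (\lemref{collapse}). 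Your hypothesis of a faithful realization does rule this out, but only via that collapse lemma applied to a circuit extracted by \lemref{circuits} --- not via a dimension count on the submatrix, which gives nothing since $m'\le 2n'+1<2n'+4$. (Your argument ruling out $m>2n+1$ by varying $\vec d_e$ on an edge outside a spanning independent set is fine and is in the same spirit as the paper's collapse arguments.)
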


\subsection{Roadmap and guide to reading}
Let us first briefly sketch how
\theoref{periodicparallel} implies \theoref{periodiclaman}.
All known proofs of ``Maxwall-Laman-type'' theorems (such as \theoref{periodiclaman}) proceed via
a linearization of the problem called \emph{infinitesimal rigidity}, which is
concerned with the rank of the  \emph{rigidity matrix} $\vec M_{2,3,2}(G,{\bm{\gamma}})$.  The structure
of the rigidity matrix depends on the quotient graph $(G,{\bm{\gamma}})$ of the periodic framework.
The two main steps are to prove that for a quotient graph with $n$ vertices and $m$ edges:
\begin{itemize}
\item If $\vec M_{2,3,2}(G,{\bm{\gamma}})$ has rank $2n+1$ then the associated framework is rigid.
(This is done in  \cite{BS10}.)
\item For almost all frameworks (these are called \emph{generic}) with quotient graph $(G,{\bm{\gamma}})$
having $n$ vertices and $2n+1$ edges, the rank of $\vec M_{2,3,2}(G,{\bm{\gamma}})$ is $2n+1$ if and only
if $(G,{\bm{\gamma}})$ is colored-Laman.
\end{itemize}
The second step, where the rank of the rigidity matrix is proved from only a combinatorial assumption is
the (more difficult) ``Laman direction,'' and it occupies most of this paper.
The approach is as follows:
\begin{itemize}
\item We begin with a matrix $\vec M_{2,2,2}(G,{\bm{\gamma}})$, arising from a periodic direction network
that has non-zero entries in same positions as the rigidity matrix, but simpler entries
(vectors $(a_{ij},b_{ij})$  instead  of differences of points $\vec p_i-\vec p_j$).
The rank of $\vec M_{2,2,2}(G,{\bm{\gamma}})$	is much easier to analyze directly.
\item Then we apply \theoref{periodicparallel} to a colored-Laman graph $(G,{\bm{\gamma}})$.  For
generic directions $\vec d$ (defined in \secref{directions}), there is a point set $\vec p$ and a
lattice $\vec L$ such that $\vec p_i-\vec p_j$ is in the direction of $\vec d_{ij}$ and
the two endpoints of every edge are different.  Substituting in the $\vec p_i$ recovers
the rigidity matrix $\vec M_{2,3,2}(G,{\bm{\gamma}})$ from $\vec M_{2,2,2}(G,{\bm{\gamma}})$,
completing the proof.
\end{itemize}

\noindent The main task, then, is to prove \theoref{periodicparallel}.  This proceeds in three distinct steps as follows.

\noindent \textbf{The combinatorial step:} We begin by defining periodic and colored
graphs (\secref{graphs}). Sections \ref{section:colored-laman}--\ref{section:cloning} define and develop
our central combinatorial objects: \emph{colored-Laman} and \emph{$(2,2,2)$-colored-graphs}.
Our main combinatorial results are that:
\begin{itemize}
\item $(2,2,2)$-colored-graphs give the bases of a matroid (\lemref{222matroid}).
\item $(2,2,2)$-colored graphs have an alternate characterization via a	sparsity-certifying
decomposition into edge-disjoint subgraphs 	(\lemref{222decomp}).
\item Colored-Laman graphs are related to $(2,2,2)$-colored-graphs
by doubling an edge (\lemref{periodiclamancontract}).
\end{itemize}

\noindent \textbf{The natural representation step:}
In Sections \ref{section:natural}--\ref{section:22k-rep} we prove that the matroid
that has $(2,2,2)$-colored graphs as its bases admits a \emph{natural representation}:
i.e., for a periodic graph $(G,{\bm{\gamma}})$ with $n$ vertices and $2n+2$ edges,
there is a matrix $\vec M_{2,2,2}(G,{\bm{\gamma}})$ that has rank $2n+2$
if and only if $(G,{\bm{\gamma}})$ is $(2,2,2)$-colored.  The decomposition
\lemref{222decomp} lets us study $\vec M_{2,2,2}(G,{\bm{\gamma}})$ in terms
of highly structured minors for which it is easier to give exact determinant
formulas.

\vspace{.75 ex}
\noindent \textbf{The geometric step:} Sections \ref{section:directions}--\ref{section:parallel} develop
the geometric theory of
periodic direction networks and gives the proof of \theoref{periodicparallel}.
The equations giving the edge directions are closely related to the natural
representation matrix $\vec M_{2,2,2}(G,{\bm{\gamma}})$, which allows us to
combinatorially	predict the generic rank.  To understand the geometry of a generic
direction network's realization space, we make a connection between collapsed edges
in a realization and doubling of edges, bringing us back, in a natural way to
colored-Laman graphs.

\subsection{Related work: generic periodic rigidity}
The question of generic rigidity and flexibility of periodic frameworks has been studied
in the past.  The first result of which we are aware is by Walter Whiteley \cite{W88},
who showed (in our language) that there are colored graphs which lead to infinitesimally
rigid periodic frameworks if the representation of the lattice is fixed. Elissa
Ross \cite{R09,R11} proved, using methods quite different from ours, the analogue of
\theoref{periodiclaman} when the lattice representation is fixed.

\subsection{Related work: parallel redrawing}
Our proof of \theoref{periodicparallel} is essentially algebraic, but the geometric
correspondence between \emph{parallel redrawings} of finite frameworks (which are vertex
displacements preserving the edge directions) and \emph{infinitesimal motions} (which
preserve the lengths of the bars to first order, see \secref{infinitesimal})
has a long history as a folklore tool in engineering.  The reason for this is
that it is easier to see a non-trivial parallel redrawing than the associated infinitesimal motion.
Whiteley developed the subject \cite[Section 4]{W96} and its generalizations to higher-dimensional
\emph{scene analysis} \cite[Section 8]{W96}, and the idea of deducing the
Maxwell-Laman Theorem from a direct proof of the Parallel Redrawing Theorem appears
in \cite{W88}.

\subsection{Related work: $(k,\ell)$-sparse graphs}
The \emph{colored-Laman} and \emph{colored-$(2,2,2)$ graphs} we define in
\secref{tdngraphs} are an extension of the well-studied families of
\emph{$(k,\ell)$-sparse} \cite{LS08} graphs: these are defined by the
property that any subgraph spanning $n'$ vertices and $m'$ edges satisfies
$m'\le kn'-\ell$.  All known generic rigidity characterizations (e.g., \cite{L70,T84,ST10,W88}) are
in terms of $(k,\ell)$-sparse graphs with specialized parameters for $k$ and $\ell$.
The combinatorial theory we develop in \secref{tdngraphs} is a generalization of
$(k,\ell)$-sparsity, and runs in parallel to parts of it:
\begin{itemize}
\item \lemref{periodiclamancontract}, which relates colored-Laman graphs to colored-$(2,2,2)$
graphs by edge doubling is a colored graph version of theorems of Lovász-Yemini \cite{LY82} and
Recski \cite{R84}.
\item The decomposition \lemref{222decomp} is analogous to the Nash-Williams-Tutte Theorem
\cite{NW61,T61}, and we use it in a similar manner to the way \cite{W88} and \cite{ST10}
use Nash-Williams-Tutte.
\end{itemize}

\subsection{Related work: matroid constructions}
Another way to view our $\Z^2$-graded-sparse graphs is in terms of constructions on
matroids. To make the connection clear, we give an outline of our proof in these terms:
\begin{itemize}
\item We argue directly that $(1,1,2)$-graphs are the bases of a matroid with a specific
rank function (\lemref{fmat}).
\item That $(2,2,2)$-graphs give the bases of a matroid (\lemref{222matroid})  and the
associated decomposition follows from the Nash-Williams matroid construction.
\item The colored-Laman matroid can then be obtained via Dilworth Truncation \cite[Section 7.7]{B86},
which, in terms of submodular functions means going from $f(\cdot)$ to $f(\cdot) - 1$
as the function generating the matroid.
\end{itemize}
\noindent For linearly representable matroids, Matroid Union and Dilworth Truncation both have
counterparts in terms of the representation.  These are:
\begin{itemize}
\item Adding a new block to the representing matrix with the
same filling pattern but different generic variables, for Matroid Union
(\cite[Proposition 7.16.4]{B86}).
\item Confining the rows of the representing matrix to
lie in a generic hyperplane, for Dilworth Truncation (see \cite{L77}).
\end{itemize}
The subtlety, for us, is that we are after a \emph{specific}
representation for the colored-Laman graph matroid, namely the rigidity matrix $\vec M_{2,3,2}(G)$.
This means that we need an argument specific to our geometric setting, and not just
general results on Dilworth Truncation.

\subsection{Notations}
In what follows $(\tilde G,\varphi)$ denotes an (infinite) periodic graph with $\mathbb{Z}^2$-action $\varphi$.
Colored graphs are denoted $(G,{\bm{\gamma}})$.  $G$ always refers to a finite graph with vertex
set $V=V(G)$ and edge set $E(G)$.  For finite graphs, the parameters $n$, $m$ and $c$ refer to the
number of vertices, edges, and connected components respectively.
Graphs may have multiple edges (and so are multi-graphs) as well as self-loops;
edges and loops are oriented and denoted $ij$, where $i$ is the tail and $j$ is the head.
Since edges and self-loops play the same role in the combinatorial and linear theories appearing
here, for economy of language, we do not distinguish between them.  Self-loops are
treated as edges with the same vertex playing the role of head and tail; self-loops are
also treated as cycles with only one vertex. Multiple copies of the same edge are treated as distinguished;
where there is a source of confusion in the indexing, we explicitly note
it.  The coloring ${\bm{\gamma}}=(\gamma_{ij})_{ij\in E}$ is a vector mapping edges $ij$ to elements $\gamma_{ij}=(\gamma^1_{ij},\gamma^2_{ij})$
of $\mathbb{Z}^2$.

Subgraphs $(G',{\bm{\gamma}})$ of a colored graph $(G,{\bm{\gamma}})$ are taken to be edge-induced, with the induced
coloring from ${\bm{\gamma}}$, and have $n'$ vertices, $m'$ edges, and $c'$ connected components.

A matrix $\vec M$ is denoted in bold; if $\vec M$ is $m\times n$, then $\vec M[A,B]$ denotes
the submatrix induced by the row indices $A\subset [m]$ and column indices $B\subset [n]$.  Vectors
$\vec v$ are denoted in bold.  Point sets $\vec p=(\vec p_i)_1^n\subset \mathbb{R}^2$ of
$n$ points in the plane are taken both as indexed sets of points and flattened vectors $\vec p\in \mathbb{R}^{2n}$;
each point $\vec p_i=(x_i,y_i)$.

\subsection{Acknowledgements}
We would like to thank Igor Rivin for his encouragement and many productive discussions on this topic.
The anonymous referee's comments helped us greatly improve our exposition, and we thank them for their
careful reading. We became interested in this problem as part of a larger project to study the
rigidity and flexibility of zeolites \cite{SWTT06,R06,T04},
which is supported by  CDI-I grant DMR 0835586 to Rivin and M. M. J. Treacy.  LT's final
preparation of this paper was supported by the European Research Council under the European
Union's Seventh Framework Programme (FP7/2007-2013) / ERC grant agreement no 247029-SDModels.

\section{Colored and periodic graphs}\seclab{graphs}
In this section we introduce \emph{periodic graphs} and \emph{colored graphs}, which provide the
combinatorial setting for this paper.  What we call colored graphs are also known as
``gain graphs'' or ``voltage graphs'' \cite{Z98}; the terminology of colored graphs
originates with Igor Rivin's work on hypothetical zeolites \cite{R06}.

\subsection{Periodic graphs}
A {\em periodic graph} $(\tilde{G}, \varphi)$ is defined to be a simple infinite graph with an associated action
$\varphi: \Z^2 \to \Aut(\tilde{G})$ that is free on edges and vertices and such that the quotient graph
$\tilde{G}/\Z^2$ is finite.  We denote the action of $\gamma \in \Z^2$ on a vertex $i$ or edge
$ij$ by $\gamma \cdot i$ and $\gamma \cdot (ij)$ respectively.

\subsection{$\Z^2$-colored graphs}
We now define $\Z^2$-colored graphs (shortly, \emph{colored graphs}, since all the
graphs in this paper have $\Z^2$-colors), which are finite objects that capture the
essential information in a periodic graph.  A {\em $\Z^2$-colored graph}
$(G, {\bm{\gamma}})$ is a directed graph $G$ together with a vector
${\bm{\gamma}}_{ij}=(\gamma_{ij})$ assigning each edge of $G$ a group element in
$\mathbb{Z}^2$, which we call the \emph{color}.

Given a colored graph $(G,\bgamma)$, we define its \emph{development} $(\tilde{G},\varphi)$
as follows:
set
\[V(\tilde{G}) =  \{ (i, \gamma) \;\; | \;\; i \in V(G) \;\; \gamma \in \Z^2\}\]
Similarly, define  the edge set as $E(\tilde{G}) = E(G) \times \Z^2$.  Set the tail of
$(ij, \gamma)$ to be  $(i, \gamma)$ and the head of $(ij, \gamma)$ to be $(j, \gamma_{ij} +\gamma)$.
These definitions induce a $\Z^2$-action $\phi$ via
\begin{eqnarray*}
\gamma' \cdot (i,\gamma) = (i, \gamma' +\gamma) & \text{and} & \gamma' (ij, \gamma) = (ij, \gamma' +\gamma)
\end{eqnarray*}

We can also define a colored graph, called the \emph{colored quotient},
for each periodic graph  $(\tilde{G}, \varphi)$. Let the quotient graph
$G = \tilde{G}/\Z^2$ have $n$ vertices and $m$ edges.  Select one representative from each
$\Z^2$-orbit of vertices in $V(\tilde G)$ (there will be $n$ of these) to represent $V(G)$.
We define a coloring ${\bm{\gamma}}$ of $G$ as follows:
\begin{itemize}
\item Orient $G$ in some way.
\item Let $ij\in E(G)$ be a directed edge of $G$ and let $\tilde {ij}$ be the unique lift
of $ij$ with tail $\tilde i$ being the chosen representative for $i\in V(G)$.
\item The head of $\tilde {ij}$ is $\gamma\cdot\tilde j$ where $\tilde j$
is the chosen representative of $j$ in $V(\tilde G)$ and $\gamma$ is uniquely defined by
the choices of $\tilde i$ and $\tilde j$.  We define this $\gamma$ to be
the color $\gamma_{ij}$ on $ij\in E(G)$.
\end{itemize}
If $(\tilde{G}, \varphi)$ is already realized on a periodic point set, a convenient way to
pick the representatives is by the points in a fundamental domain of the $\Z^2$-action on the plane,
as we did in \figref{framework-example} (b).
The following lemma shows that the associated colored graph encodes all
the data of the periodic graph.
\begin{lemma}\lemlab{dictionary}
The development of every colored graph is a periodic graph, and, in particular, every
periodic graph is the development of its colored quotient, for any choice of representatives.
\end{lemma}
\begin{proof}
Let $(G,\bgamma)$ be a colored graph.  It is clear from the definition
that the $\Z^2$-action $\phi$ on the development $\tilde{G}$ acts
freely one the edges and vertices, so $(\tilde{G},\phi)$ is periodic.
Moreover, by construction, $G$, as an undirected graph, is the quotient $\tilde{G}/\Z^2$.
To see that $(G,\bgamma)$ is the colored quotient of $\tilde{G}$, let $G$ retain its
original orientation and select as representatives for $V(G)$ the vertices $(i,(0,0))\in V(\tilde{G})$,
and for edges $ij\in E(G)$ the unique lift of $ij$ with tail $(i,(0,0))$ this lift then has
head $(j,\gamma_{ij})$, as desired.
\end{proof}
In slightly different language, \lemref{dictionary} says that the colors in a colored
graph $(G,\bgamma)$ encode the covering map to $G$ from its development $\tilde{G}$
induced by $\varphi$.

\subsection{The $\mathbb{Z}^2$-rank  of a colored graph}
Different choices of vertex representatives of $V(G)$ will yield different colorings.
The ``right'' finite graph counterpart to $(\tilde{G}, \varphi)$ is the finite
graph $G = \tilde{G}/\Z^2$ along with the data of a homomorphism
$\rho: \HH_1(G, \Z) \to \Z^2$, where $\HH_1$ is the first homology group.
One can show that there is a natural bijective correspondence between pairs $(\tilde{G}, \varphi)$
and pairs $(G, \rho)$.  For our purposes, it will be technically simpler to use colored graphs.

A fundamental notion used in our paper is that of the $\Z^2$-rank of a
colored graph. Let $C$ be a simple cycle in a colored graph $(G,{\bm{\gamma}})$ and fix a
traversal order of $C$. We define the function $\rho(C)$ to be:
\[
\rho(C) = \left(\sum_{\substack{\text{$ij\in C$ }\\ \text{traversed from $i$ to $j$}}}\gamma_{ij}\right) -
\left(\sum_{\substack{\text{$ij\in C$} \\\text{traversed from $j$ to $i$}}}\gamma_{ij}\right)
\]
With a slight abuse of notation, we denote the induced homomorphism $\HH_1(G, \Z) \to \Z^2$ by $\rho$
as well.  The \emph{$\Z^2$-rank of $(G, \bm{\gamma})$} is then defined to be the rank of the
subgroup of $\Z^2$ generated by the image $\rho(G, \bm{\gamma})$ of $\rho$.
Equivalently, this is the number of linearly independent vectors in $\rho(G, \bm{\gamma})$.

Note that the $\mathbb{Z}^2$-rank is invariant under the choice of representatives
when taking the quotient of a periodic graph to obtain a colored graph.

\subsection{Facts about the $\Z^2$-rank}
The following basic lemmas about the $\Z^2$-rank will be useful later.
\begin{lemma}\lemlab{z2-basis}
Let $(G,\bm{\gamma})$ be a colored graph with $\mathbb{Z}^2$-rank $k$.  Then for any choice
of cycle basis $B$ for $\HH_1(G,\Z)$, there are $k$ cycles in $B$ with independent images
in $\mathbb{Z}^2$.
\end{lemma}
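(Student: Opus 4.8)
The plan is to view $\rho$ as a linear map on the vector space $\HH_1(G,\Q)$ (or just work with $\HH_1(G,\Z)$ directly) and exploit the fact that the $\Z^2$-rank is the rank of the image subgroup $\rho(\HH_1(G,\Z)) \le \Z^2$. The key observation is that $\rho \colon \HH_1(G,\Z) \to \Z^2$ is a group homomorphism, so the rank of its image equals the rank of $\HH_1(G,\Z)$ minus the rank of $\Ker\rho$; equivalently, tensoring with $\Q$, the dimension of $\rho(\HH_1(G,\Q))$ is $k$. Since a basis $B$ for $\HH_1(G,\Z)$ is also a basis for $\HH_1(G,\Q)$, and $\rho$ extends $\Q$-linearly, the images $\{\rho(b) : b \in B\}$ span the $\Q$-span of $\rho(\HH_1(G,\Q))$, which has dimension exactly $k$.

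From here the argument is linear algebra: first I would note that $\{\rho(b) : b\in B\}$ spans a space of dimension $k$, hence some subset of size $k$ of these vectors is $\Q$-linearly independent. Then I would observe that $\Q$-linear independence of elements of $\Z^2$ is the same as independence over $\Z$ in the sense used in the paper (they generate a rank-$k$ subgroup), because a $\Z$-linear dependence is in particular a $\Q$-linear dependence, and conversely clearing denominators turns a $\Q$-dependence into a $\Z$-dependence. This gives $k$ cycles in $B$ whose images in $\Z^2$ are independent, which is exactly the claim.

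The only slightly delicate point — and the step I would be most careful about — is making sure the $\Q$-linear extension of $\rho$ is legitimate and that it genuinely carries a $\Z$-basis of $\HH_1(G,\Z)$ to a $\Q$-spanning set of $\rho(\HH_1(G,\Q))$. This is standard since $\HH_1(G,\Z)$ is free abelian (the cycle space of a graph), so $\HH_1(G,\Z)\otimes\Q \cong \HH_1(G,\Q)$ and a $\Z$-basis becomes a $\Q$-basis; the image of a spanning set spans the image. One should also double-check the edge case $k=0$ (then the claim is vacuous) and that self-loops, treated as one-vertex cycles, cause no trouble — they are handled uniformly by the definition of $\rho$ already given. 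I expect no real obstacle here; the lemma is essentially the rank–nullity theorem applied to $\rho$, repackaged in the language of cycle bases.
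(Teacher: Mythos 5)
Your linear algebra is fine and is, if anything, more careful than the paper's, but you have skipped the one step that the paper's proof actually consists of. You write that ``the key observation is that $\rho\colon \HH_1(G,\Z)\to\Z^2$ is a group homomorphism'' and then use this without justification; the delicate point you flag (legitimacy of the $\Q$-linear extension of an honest homomorphism of free abelian groups) is indeed routine, but the prior question of whether $\rho$ is a homomorphism at all is not automatic from the definitions. In the paper $\rho$ is defined only on simple cycles, by summing colors along a chosen traversal with signs, so one must check that this assignment descends to a well-defined additive map on homology classes. The paper's proof does exactly this and nothing else: if $C_3=C_1\Delta C_2$, then with traversals chosen so that edges of $C_1\cap C_3$ are traversed forwards and edges of $C_2\cap C_3$ backwards, the contributions of the edges in $C_1\cap C_2$ cancel and $\rho(C_3)=\rho(C_1)+\rho(C_2)$; after that the paper simply declares the rank statement immediate, leaving implicit the computation you spell out.

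To repair your argument you need only insert this verification, or, more cleanly, define $\rho$ from the start as the restriction to $\HH_1(G,\Z)$, viewed inside the edge chain group, of the homomorphism sending the oriented edge $ij$ to $\gamma_{ij}$; this agrees with the traversal definition on simple cycles and is manifestly additive. One further small point worth a sentence: the $\Z^2$-rank is defined via the subgroup generated by the images of \emph{simple} cycles, so you should note that this subgroup coincides with the image of the homomorphism on all of $\HH_1(G,\Z)$ --- which again follows once additivity is established, since simple cycles generate $\HH_1(G,\Z)$. With those two remarks added, your proof is correct and is essentially the paper's argument with the two halves given opposite emphasis.
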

\begin{proof}
If the map $\rho$ extends to a well-defined linear map on $\HH_1(G,\Z)$, then we are done.
This follows from the fact that if the cycle $C_3=C_1\Delta C_2$, then if edges in $C_1\cap C_3$
are traversed forwards, edges in $C_2\cap C_3$ are traversed backwards. Thus
$\rho(C_3)=\rho(C_1)+\rho(C_2)$, since the contributions of edges in $C_1\cap C_2$ cancel
on the r.h.s.
\end{proof}

\begin{lemma}\lemlab{z2-rank-01}
Let $(G,\bgamma)$ be a connected colored graph, and let $ij$ be colored edge not in $E(G)$.
Then the $\Z^2$-rank of $(G+ij,\bgamma)$ is at most one more than
the $\Z^2$-rank of $(G,\bgamma)$.
\end{lemma}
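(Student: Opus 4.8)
The plan is to reduce the statement to the standard fact that attaching one edge to a connected graph raises the first Betti number by exactly one, and then to exploit that the $\Z^2$-rank depends only on the image of $\rho$ on a basis of $\HH_1$, which is the content already extracted in the proof of \lemref{z2-basis}.

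First I would fix a spanning tree $T$ of $G$. Since $G$ is connected and $i,j\in V(G)$, the tree $T$ is also a spanning tree of $G+ij$ (in the self-loop case $i=j$ this is immediate, $T$ being unchanged). The fundamental cycles of the non-tree edges of $G$ with respect to $T$ form a basis $B$ of $\HH_1(G,\Z)$, and adjoining the fundamental cycle $C_0$ of the new edge $ij$ with respect to $T$ yields a basis $B' = B\cup\{C_0\}$ of $\HH_1(G+ij,\Z)$; in particular $\dim\HH_1(G+ij,\Z) = \dim\HH_1(G,\Z)+1$. Note that, since $\rho(C)$ depends only on the edges traversed by $C$ and their colors, the value of $\rho$ on a cycle of $G$ is unchanged when that cycle is regarded inside $G+ij$, so the inclusion $\HH_1(G,\Z)\hookrightarrow\HH_1(G+ij,\Z)$ is compatible with $\rho$.

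Next, using that $\rho$ is a well-defined homomorphism $\HH_1(G+ij,\Z)\to\Z^2$ (established in the proof of \lemref{z2-basis}), the image $\rho(G+ij,\bgamma)$ is the subgroup of $\Z^2$ generated by $\{\rho(C) : C\in B'\} = \{\rho(C) : C\in B\}\cup\{\rho(C_0)\}$. The subgroup generated by $\{\rho(C):C\in B\}$ is exactly $\rho(G,\bgamma)$, again because $\rho$ is a homomorphism and $B$ spans $\HH_1(G,\Z)$, and by definition it has rank equal to the $\Z^2$-rank of $(G,\bgamma)$. Adjoining the single extra generator $\rho(C_0)$ raises the rank of the generated subgroup by at most one, so $\operatorname{rk}_{\Z^2}(G+ij,\bgamma)\le\operatorname{rk}_{\Z^2}(G,\bgamma)+1$, as claimed. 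There is no real obstacle here; the only point needing a word of care is the self-loop case, which is covered by the paper's convention that a self-loop is a one-vertex cycle, so its fundamental cycle is the loop itself and it contributes one new basis element to $\HH_1$ exactly as an ordinary non-tree edge would.
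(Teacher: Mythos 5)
Your proof is correct and follows essentially the same route as the paper's: extend a basis of $\HH_1(G,\Z)$ to one of $\HH_1(G+ij,\Z)$ by adding the one new cycle through $ij$, then observe that a single extra generator raises the rank of the image subgroup by at most one. Your version is just more explicit (spanning tree, fundamental cycles, the self-loop case), which is fine.
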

\begin{proof}
Pick a cycle basis for $\HH_1(G,\Z)$ and then extend it to a basis of $\HH_1(G+ij,\Z)$.  This
process adds at most one cycle $C$, and, thus, the dimension of $\rho(G+ij,\bgamma)$ is
at most one more than that of $\rho(G,\bgamma)$.
\end{proof}

\begin{lemma}\lemlab{z2-rank}
Let $(G,\bm{\gamma})$ be a colored graph and let $(G',\bm{\gamma})$ be a subgraph of $(G,\bm{\gamma})$.
Suppose that adding the (colored) edge $ij$ causes the $\Z^2$-rank of $(G,\bm{\gamma})$ to
increase, and that $ij$ is spanned by a connected component of $G'$.  Then adding
$ij$ to $G'$ causes its $\Z^2$-rank to increase.
\end{lemma}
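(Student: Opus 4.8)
The plan is to argue by contraposition: assume that adding the colored edge $ij$ to $G'$ leaves its $\Z^2$-rank unchanged, and deduce that adding $ij$ to $G$ leaves the $\Z^2$-rank of $(G,\bgamma)$ unchanged, which contradicts the hypothesis.

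The first step is to pin down a concrete cycle through $ij$ that lives in both graphs. Since $ij$ is spanned by a connected component $H'$ of $G'$, both endpoints $i$ and $j$ are vertices of $H'$, so I can pick a path $P$ in $H'$ from $i$ to $j$ and set $C = P + ij$, a simple cycle of $G' + ij$. Because $G' \subseteq G$, the path $P$ also lies in $G$, the vertices $i$ and $j$ lie in a single connected component of $G$, and $C$ is equally a cycle of $G + ij$ through $ij$. Using one fixed cycle for both comparisons is exactly what the ``spanned by a connected component'' hypothesis buys.

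The technical heart is a short claim, proved just as in \lemref{z2-rank-01}: for any colored graph $K$ and an edge $ij$ whose endpoints lie in one connected component of $K$, the $\Z^2$-rank of $K + ij$ exceeds that of $K$ if and only if $\rho(C_0)$ is not in the $\Q$-span of the image $\rho(K,\bgamma)$, where $C_0$ is any simple cycle of $K + ij$ through $ij$. To see this, extend a basis $B$ of $\HH_1(K,\Z)$ to a basis $B \cup \{C_0\}$ of $\HH_1(K+ij,\Z)$ (adding a single non-loop edge between two vertices of the same component raises the first Betti number by exactly one, and $C_0$ uses $ij$), and recall from the proof of \lemref{z2-basis} that $\rho$ is a homomorphism on $\HH_1$; hence $\rho(B)$ generates $\rho(K,\bgamma)$ and $\rho(B \cup \{C_0\})$ generates $\rho(K+ij,\bgamma)$, so the rank of the latter subgroup exceeds that of the former precisely when $\rho(C_0)$ lies outside the $\Q$-span of $\rho(K,\bgamma)$.

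Applying the claim with $K = G'$ and the cycle $C$, the assumption that the $\Z^2$-rank does not increase says that $\rho(C)$ lies in the $\Q$-span of $\rho(G',\bgamma)$. Since every cycle of $G'$ is also a cycle of $G$ carrying the same edge colors, $\rho(G',\bgamma) \subseteq \rho(G,\bgamma)$ as subgroups of $\Z^2$, so $\rho(C)$ lies in the $\Q$-span of $\rho(G,\bgamma)$. Applying the claim a second time with $K = G$ and the same cycle $C$ then shows that adding $ij$ to $G$ does not increase its $\Z^2$-rank, the desired contradiction. The only point needing care is the bookkeeping inside the claim, namely that $B \cup \{C_0\}$ is genuinely a basis of $\HH_1(K+ij,\Z)$ and that ``rank'' throughout refers to the rank of the generated subgroup, equivalently the dimension of its $\Q$-span; modulo that, the argument is routine.
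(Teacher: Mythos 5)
Your proof is correct and is essentially the paper's argument: both hinge on exhibiting a single cycle through $ij$ that lies in $G'+ij$ (hence also in $G+ij$) and on the equivalence ``the $\Z^2$-rank increases iff the image of that cycle lies outside the $\Q$-span of $\rho(\cdot,\bgamma)$,'' combined with $\rho(G',\bgamma)\subseteq\rho(G,\bgamma)$. The only differences are cosmetic: you argue by contraposition and build the cycle from an arbitrary path in the component of $G'$, whereas the paper argues directly using the fundamental cycle of $ij$ with respect to a spanning forest of $G$ extending one of $G'$.
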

\begin{proof}
Pick a spanning forest $F'$ in $G'$ and extend it to a spanning forest $F$ of $G$.  The collection of
cycles formed by each edge of $E(G)-F$ and $F$ give a basis of $\HH_1(G, \Z)$.  In particular,
since adding the edge $ij$ to $G$ causes an increase in the $\Z^2$-rank, \lemref{z2-basis}
implies that the $\rho$-image of the cycle formed by $ij$
and $F$ is not in the span of $\rho(G,\bm{\gamma})$.  By hypothesis, this cycle uses only $ij$ and
edges from $F'$, and since $\rho(G',\bm{\gamma})$ is contained in $\rho(G,\bm{\gamma})$,
the $\Z^2$-rank of $(G',\bm{\gamma})$ increases as well.
\end{proof}

\subsection{The $\Z^2$-image and the development}
Although none of our proofs explicitly depend on it, we will want to refer to the following
fact about how the $\Z^2$-image of a colored graph relates to the connectivity of its development.
In the interest of space, we skip the proof, which is straightforward, but requires a case analysis.
\begin{lemma}\lemlab{development-connectivity}
Let $(G,\bgamma)$ be a colored graph with $G$ connected and $\Z^2$-rank k.  Let $\Gamma(G) < \Z^2$ be the subgroup
of $\Z^2$ generated by $\rho(G,\bgamma)$, and let $\ell$ be the index of $\Gamma(G)$ in $\Z^2$.  Let $(\tilde{G},\varphi)$
be the development. Then:
\begin{itemize}
\item If $k=2$, then $\ell$ is finite, and the development has $\ell$ infinite connected components.
\item If $k=1$, then $\ell$ is infinite, and the development has infinitely many
infinite connected components.  These components can be indexed by $\Z\times [\ell']$, where
$\ell'$ is the index of a subgroup of $\Z$.
\item If $k=0$, then $\ell$ is infinite and the development has infinitely many finite connected components,
which can be indexed by $\Z\times \Z$.
\end{itemize}
\end{lemma}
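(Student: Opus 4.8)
The plan is to describe each connected component of the development $(\tilde{G},\varphi)$ explicitly, using the construction from the proof of \lemref{dictionary}, and then to read off the three cases from the structure of the abelian group $\Z^2/\Gamma(G)$.

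First I would fix a basepoint $i_0\in V(G)$ and, using that $G$ is connected, a spanning tree $T$ of $G$; for each vertex $i$ let $w(i)\in\Z^2$ be the value of $\rho$ on the path in $T$ from $i_0$ to $i$. The central claim is that the connected component of $\tilde{G}$ containing $(i_0,(0,0))$ is exactly $S_0 = \{\,(i,\,w(i)+\gamma)\ :\ i\in V(G),\ \gamma\in\Gamma(G)\,\}$. For the inclusion of $S_0$ into a single component: lifting $T$ from $(i_0,(0,0))$ connects $(i_0,(0,0))$ to $(i,w(i))$ for every $i$; lifting a closed walk at $i_0$ representing a cycle $C$ reaches $(i_0,\rho(C))$ (the sign convention in the definition of $\rho$ is exactly the one produced by forward and backward traversals of development edges); and since $G$ is connected, closed walks at $i_0$ realize every class of $\HH_1(G,\Z)$, so their $\rho$-images generate $\Gamma(G)$. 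Hence every $(i_0,\gamma)$ with $\gamma\in\Gamma(G)$, and then, re-lifting $T$, every $(i,w(i)+\gamma)$, lies in the component of $(i_0,(0,0))$. Conversely, $S_0$ is a union of components: the endpoints of an edge $(ij,\delta)$ are $(i,\delta)$ and $(j,\delta+\gamma_{ij})$, and from $w(i)+\gamma_{ij}-w(j)=\rho(\text{fundamental cycle of }ij)\in\Gamma(G)$ one checks that if one endpoint lies in $S_0$ then so does the other (and likewise for self-loops and parallel edges). Combining the two directions, $S_0$ is a single connected component.

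Next I would transport this along the $\Z^2$-action: $\varphi(\delta)$ is an automorphism of $\tilde{G}$ carrying $S_0$ to the component of $(i_0,\delta)$, which the same computation identifies as $S_\delta=\{\,(i,\,w(i)+\delta+\gamma)\ :\ i\in V(G),\ \gamma\in\Gamma(G)\,\}$. Since the first coordinate of a vertex determines $i$, the map $(i,\gamma)\mapsto(i,w(i)+\delta+\gamma)$ is injective, so $S_\delta$ has $|V(G)|\cdot|\Gamma(G)|$ vertices, and $S_\delta=S_{\delta'}$ if and only if $\delta-\delta'\in\Gamma(G)$. Therefore the connected components of $\tilde{G}$ are in bijection with $\Z^2/\Gamma(G)$; there are $\ell=[\Z^2:\Gamma(G)]$ of them, and each is finite if and only if $\Gamma(G)$ is trivial.

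Finally I would carry out the case analysis on $k=\operatorname{rk}_{\Z^2}(G,\bgamma)$, noting $\Gamma(G)\cong\Z^k$. If $k=2$ then $\ell$ is finite and each of the $\ell$ components is infinite since $\Gamma(G)$ is. If $k=0$ then $\Gamma(G)=\{0\}$, so $\Z^2/\Gamma(G)=\Z^2$ is indexed by $\Z\times\Z$ and each component has $|V(G)|$ vertices. If $k=1$ then $\ell$ is infinite; letting $\Gamma'=\{\,v\in\Z^2:nv\in\Gamma(G)\text{ for some }n\geq1\,\}$ be the saturation of $\Gamma(G)$ (a primitive rank-one subgroup with $\Z^2/\Gamma'\cong\Z$) and $\ell'=[\Gamma':\Gamma(G)]$ the index of $\Gamma(G)$ inside $\Gamma'\cong\Z$, Smith normal form gives $\Z^2/\Gamma(G)\cong\Z\times\Z/\ell'\Z$, so the components are indexed by $\Z\times[\ell']$ and each is infinite because $\Gamma(G)$ is. The whole argument is elementary; the only point needing care is the component description in the second paragraph — matching the sign conventions for $\rho$ against edge traversals in the development, and using connectedness of $G$ to see that cycles through the basepoint already generate $\Gamma(G)$ — which is precisely the bookkeeping the paper chooses to omit.
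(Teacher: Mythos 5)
Your proof is correct, and since the paper deliberately omits the argument for \lemref{development-connectivity} (``straightforward, but requires a case analysis''), what you have written is precisely the proof being alluded to: identify the component of $(i_0,(0,0))$ in the development of \lemref{dictionary} as the orbit $\{(i,w(i)+\gamma):\gamma\in\Gamma(G)\}$, transport by the deck action to see that components are indexed by $\Z^2/\Gamma(G)$ with each component of size $|V(G)|\cdot|\Gamma(G)|$, and then read off the three cases from the structure theorem for $\Z^2/\Gamma(G)$. The two points you flag as needing care --- the sign bookkeeping showing $w(i)+\gamma_{ij}-w(j)=\rho$ of the fundamental cycle of $ij$, and the use of connectedness so that closed walks at the basepoint realize all of $\Gamma(G)$ --- are indeed the only nontrivial verifications, and you handle both correctly.
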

\figref{dependent-framework} (a) shows an example of the $k=1$ case of \lemref{development-connectivity}.

\subsection{Doubling edges}
In the sequel, we make use of the operation of \emph{doubling an edge $ij$} in colored graphs.
Let $(G,{\bm{\gamma}})$ be a colored graph, and let $ij\in E(G)$ be an edge with color $\gamma_{ij}$.
Then, the {\em graph with $ij$ doubled, $(G+(ij)_c, \bm{\gamma})$} is the colored graph
$(G,{\bm{\gamma}})$ with a new edge $(ij)_c$ with the same color $\gamma_{ij}$ as $ij$.

\section{Colored-Laman graphs}\seclab{colored-laman}
Let $(G,{\bm{\gamma}})$ be a colored graph.  We define $(G,{\bm{\gamma}})$ to be
\emph{$(2,3,2)$-$\mathbb{Z}^2$-graded-sparse} (shortly, \emph{colored-Laman-sparse})
if for all edge-induced subgraphs $(G',{\bm{\gamma}}')$
of $(G,{\bm{\gamma}})$ on $n'$ vertices, $m'$ edges, $c'$ connected components,
and $\Z^2$-rank $k'$,
\[
m'\le 2n' - 3 + 2k' - 2(c'-1)
\]
If, in addition, $(G,{\bm{\gamma}})$ has $m = 2n+1$ edges, we define $(G,{\bm{\gamma}})$ to be a
\emph{colored-Laman graph}.

Because of the key role they play, the rest of this section gives examples of colored graphs
that are and are not colored-Laman, with some discussion of the aspects of periodic rigidity
each example captures.  \secref{cloning} proves the key properties of colored-Laman graphs that we need;
these are stated in terms of the related family of $(2,2,2)$-graphs, which are developed in
\secref{tdngraphs}.

\subsection{Connection between colored-Laman graphs and Laman graphs}
If the rank in $\mathbb{Z}^2$ is zero for all subgraphs, then
for a graph to be colored-Laman-sparse, we have
\[
m' \le 2n' - 3 - 2(c'-1)
\]
The Laman graphs that characterize minimal rigidity of finite planar frameworks are defined by the
counting condition ``$m'\le 2n' - 3$'', but it is not hard to show, using \cite[Lemma 4]{LS08},
that minimal violations of (uncolored) Laman-sparsity happen on connected subgraphs,
so the two conditions give the same family of graphs.  Thus Laman-sparse graphs, with any
coloring, are always colored-Laman-sparse.

\subsection{The smallest colored-Laman graph}
\figref{colored-laman-one-vertex} shows an example of the smallest colored-Laman graphs:
a single vertex with three self-loops with colors such that every two colors
are linearly independent.  Rigidity of this example, and its higher-dimensional generalizations was
shown in \cite{BS10}. Note that if any two of the colors are linearly dependent,
the subgraph containing just these loops is not colored-Laman sparse.
\begin{figure}[htbp]
\centering
\includegraphics[width=.3\textwidth]{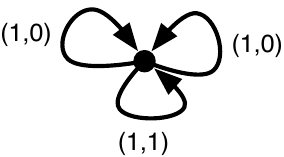}
\caption{A colored-Laman graph with one vertex.}
\label{fig:colored-laman-one-vertex}
\end{figure}

\subsection{A larger example}
\figref{colored-laman-larger} is a larger example of a colored-Laman graph.
\begin{figure}[htbp]
\centering
\includegraphics[width=.5\textwidth]{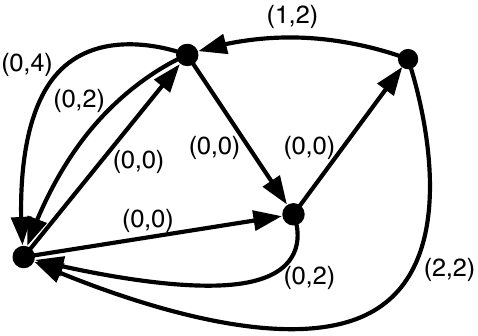}
\caption{An example of a colored-Laman graph.}
\label{fig:colored-laman-larger}
\end{figure}

\subsection{Colored-Laman graphs need not be Laman-spanning}
\figref{colored-laman-not-laman-spanning} (c) shows that colored-Laman graphs
need not have any spanning subgraph that, after forgetting the colors,
corresponds to a minimally rigid generic finite framework.
\begin{figure}[htbp]
\centering
\includegraphics[width=.55\textwidth]{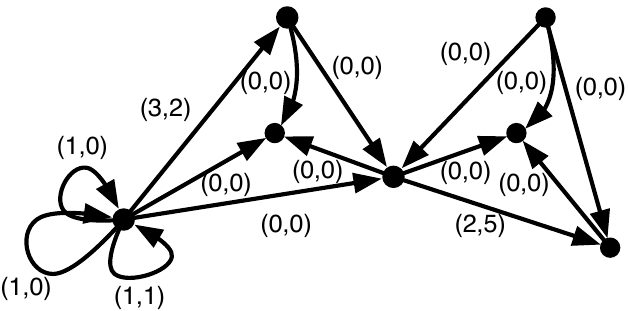}
\caption{A colored-Laman graph that does not have a Laman graph as a spanning subgraph.}
\label{fig:colored-laman-not-laman-spanning}
\end{figure}

\subsection{Example: checking edge-induced subgraphs is essential}
That the colored-Laman counts are defined on subsets of
\emph{edges} and not subsets of vertices is essential: the colored graph in \figref{connected-example} has $2n+1$
edges, and meets the colored-Laman counts on all vertex-induced subgraphs, but it is \emph{not} colored-Laman.
The subgraph in \figref{connected-example} that fails the colored-Laman condition is the
colored graph  \figref{dependent-framework} (b).
\begin{figure}[htbp]
\centering
\includegraphics[width=0.6\textwidth]{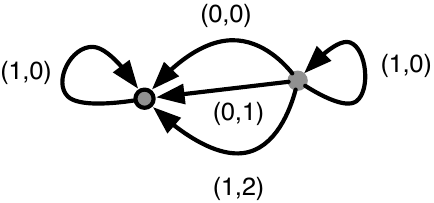}
\caption{A colored graph that is \emph{not} colored-Laman, but meets the sparsity
counts on all \emph{vertex induced}
subgraphs.  Edges are shown with their orientation and colors.
Sparsity is violated on the disconnected subgraph including the two self-loops
with the same color.}
\label{fig:connected-example}
\end{figure}

\subsection{Finite-index $\Z^2$-image}\seclab{finite-index}
\figref{colored-laman-finite-index} (a) shows a slight variation of the example in \figref{colored-laman-one-vertex}:
the underlying colored graph is still just three self-loops, but the second coordinates of all the colors
have been multiplied by two.
\begin{figure}[htbp]
\centering
\subfigure[]{\includegraphics[width=0.3\textwidth]{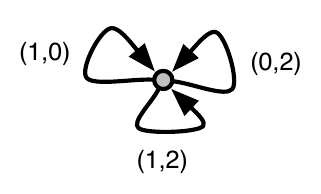}}
\subfigure[]{\includegraphics[width=0.5\textwidth]{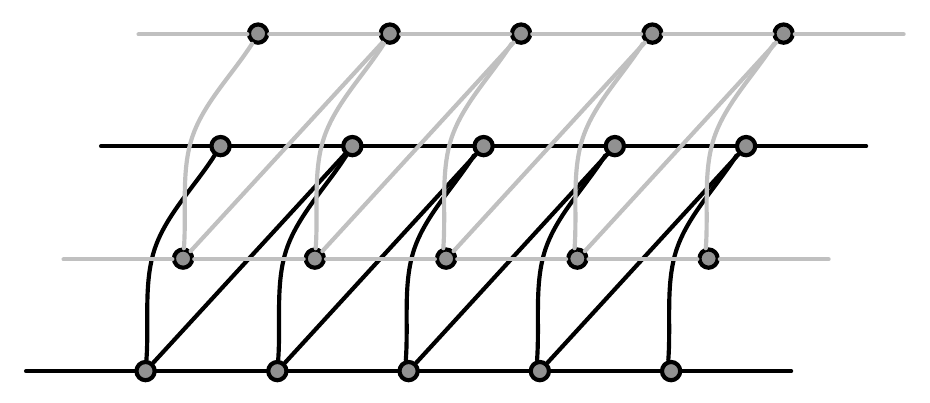}}
\caption{A colored-Laman graph with $\Z^2$-image generating a finite-index subgroup of $\Z^2$ (a)
and its development (b) with connected components indicated by color.}
\label{fig:colored-laman-finite-index}
\end{figure}\lemref{development-connectivity} implies that developing this colored graph to a
periodic graph, as in \figref{colored-laman-finite-index} (shown just as a periodic graph, since the bars of
an associated framework would, necessarily overlap), yields two connected components.
This example illustrates that forcing all the motions to preserve periodicity with
respect to the $\Z^2$-action means that there are rigid periodic frameworks that are not connected,
even though their quotient graph is.

\section{$(2,2,2)$-colored graphs}\seclab{tdngraphs}
Let $(G,{\bm{\gamma}})$ be a colored graph with $n$ vertices and $m$ edges.
We define $(G,{\bm{\gamma}})$ to be \emph{$(2,2,2)$-$\mathbb{Z}^2$-graded-sparse} (shortly, \emph{$(2,2,2)$-sparse})
if for all edge-induced subgraphs $(G',{\bm{\gamma}}')$ of $(G,{\bm{\gamma}})$ on $n'$ vertices, $m'$ edges,
$c'$ connected components, and $\Z^2$-rank $k'$,
\[
m'\le 2n' - 2 + 2k' - 2(c'-1)
\]
If, in addition, $(G,{\bm{\gamma}})$ has $m = 2n+2$ edges, we define $(G,{\bm{\gamma}})$ to be a
\emph{$(2,2,2)$-colored-graph} (shortly, a \emph{$(2,2,2)$-graph}).  More generally,
if $(G,\bgamma)$ is $(2,2,2)$-sparse, has $\Z^2$-rank $k$, and
has $m=2n-2 + 2k$ edges,
we define it to be a \emph{$(2,2,k)$-colored-graph}, or a \emph{$(2,2,k)$-graph}, for short.

\subsection{Relationship to $(k,\ell)$-sparsity}
Our definition of colored-Laman-sparsity and $(2,2,2)$-sparsity has two features that
distinguish it from the traditional $(k,\ell)$-sparsity counts:
\begin{itemize}
\item The number of edges allowed in a subgraph is controlled by the $\Z^2$-rank.
\item The number of edges in a colored-Laman graph on $n$ vertices is $2n+1$;
similarly, a $(2,2,2)$-graph has $2n+2$ edges.  This is outside of
the ``matroidal range'' \cite[Theorem 2]{LS08} for $(k,\ell)$-sparse graphs.
\end{itemize}

The next several sections develop the combinatorial and matroidal properties of $(2,2,k)$-graphs that
we need to prove \theoref{periodicparallel}.  We start with some preliminaries from matroid
theory.

\subsection{Matroid preliminaries}
A \emph{matroid} $\mathcal{M}$ on a ground set $E$ is a combinatorial structure
that captures properties of linear independence.  Matroids have many
equivalent definitions, which may be found in a monograph such as \cite{O06}.  For our
purposes, the most convenient formulation is in terms of \emph{bases}: a matroid $\mathcal{M}$
on a finite ground set $E$ is presented by its bases $\mathcal{B}\subset 2^E$, which
satisfy the following properties:
\begin{itemize}
\item The set of bases $\mathcal{B}$ is not empty.
\item All elements $B\in \mathcal{B}$ have the same cardinality, which is the \emph{rank}
of $\mathcal{M}$.
\item For any two distinct bases $B_1,B_2\in \mathcal{B}$, there are elements
$e_1\in B_1-B_2$ and $e_2\in B_2$ such that $B_2+ \{ e_1 \}-\{ e_2\}\in \mathcal{B}$.
\end{itemize}
In this paper, the ground set is the colored graph $(K_{n}^{6,4},{\bm{\gamma}})$, where $K_{n}^{6,4}$ is the complete
graph on $n$ vertices with $6$ distinguished copies of each edge and $4$ distinguished self-loops on
each vertex.  The coloring ${\bm{\gamma}}$ can vary, but will always be fixed in the statements of theorems; i.e.,
we are defining a family of matroids indexed by coloring.

The reason we define the ground set this way is because at most $6$ parallel edges or $4$ self-loops can appear in
any $(2,2,2)$-sparse colored graph.  Thus there is no loss of generality in making this restriction.

In addition, we need the following two fundamental results of matroid
theory.
\begin{prop}[\edmonds][{\cite{ER66}}]\proplab{edmonds}
Let $f$ be a non-negative, increasing, integer-valued, submodular function on the power set of a finite set $E$.
Then the collection of subsets
\[
\mathcal{B}_f = \{ E'\subset E :\text{$f(E')=f(E)$ and for all $E''\subset E'$, $f(E'')\le \card{E''}$} \}
\]
gives the bases of a matroid defined to be $\mathcal{M}_f$.
\end{prop}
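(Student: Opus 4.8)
The statement is Edmonds's classical construction of a matroid from a submodular function, so the plan is to reconstruct its standard proof: exhibit $\mathcal{M}_f$ through an explicit candidate rank function and then read off the independent sets and bases. Concretely, for $A\subseteq E$ I would set
\[
r(A)\;=\;\min_{S\subseteq A}\bigl(f(S)+\card{A\setminus S}\bigr),
\]
normalize $f(\emptyset)=0$ (without which $\mathcal{B}_f$ is empty and there is nothing to prove), and claim $r$ is the rank function of the desired matroid. The independent sets of $\mathcal{M}_f$ will then be $\{A:r(A)=\card A\}$, and unwinding the minimum one sees that $r(A)=\card A$ holds exactly when $\card{A'}\le f(A')$ for every $A'\subseteq A$ — i.e.\ the hereditarily $f$-sparse sets, which is the condition displayed in the statement.

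The bulk of the verification that $r$ is a matroid rank function is routine bookkeeping: $r(\emptyset)=f(\emptyset)=0$ and $r(A)\le\card A$ (take $S=\emptyset$); monotonicity $r(A)\le r(A')$ for $A\subseteq A'$ holds because $S\cap A$ is feasible for $r(A)$ and $f(S\cap A)+\card{A\setminus(S\cap A)}\le f(S)+\card{A'\setminus S}$ by monotonicity of $f$; and unit growth $r(A)\le r(A+e)\le r(A)+1$ follows from monotonicity on the left and from feeding the optimizer for $r(A)$ into the formula for $r(A+e)$ on the right. The one substantive step — and the place I expect to have to be careful — is submodularity of $r$, which I would prove by the usual \emph{uncrossing}: picking $S\subseteq A$ and $T\subseteq B$ attaining the two minima, feed $S\cap T\subseteq A\cap B$ and $S\cup T\subseteq A\cup B$ into the definitions of $r(A\cap B)$ and $r(A\cup B)$, then combine submodularity of $f$ applied to $S,T$ with the purely set-theoretic identity
\[
\card{A\setminus S}+\card{B\setminus T}=\card{(A\cap B)\setminus(S\cap T)}+\card{(A\cup B)\setminus(S\cup T)},
\]
valid because $S\subseteq A$ and $T\subseteq B$ (checked element by element), to conclude $r(A\cap B)+r(A\cup B)\le r(A)+r(B)$.

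Once $r$ is known to be a rank function, the bases of $\mathcal{M}_f$ are the inclusion-maximal hereditarily $f$-sparse sets, so it remains to match these with the displayed family. For the forward direction: if $E'$ is hereditarily $f$-sparse and some $e\in E\setminus E'$ cannot be appended, then there is a witnessing $S\subseteq E'$ with $f(S+e)\le\card S$, whence $\card S\le f(S)\le f(S+e)\le\card S$ forces $f(S)=\card S=f(S+e)$, and uncrossing $E'$ with $S+e$ gives $f(E'+e)+f(S)\le f(E')+f(S+e)$, i.e.\ $f(E'+e)=f(E')$; iterating over all $e\notin E'$ yields $f(E')=f(E)$. Conversely, such sets are maximal, so $\mathcal{B}_f$ is exactly as claimed. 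Finally I would record, for downstream use, that the same formula for $r$ gives closed forms for the rank and closure of $\mathcal{M}_f$ — this is precisely what makes the construction usable in \lemref{fmat} and \lemref{222matroid}. (One can instead bypass $r$ and verify the independent-set exchange axiom directly by uncrossing on witnessing subsets, but the rank-function route is cleaner and produces this extra structural information for free.)
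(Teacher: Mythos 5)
The paper gives no proof of \propref{edmonds} --- it is quoted from Edmonds--Rota --- so there is no internal argument to compare against; your reconstruction via the candidate rank function $r(A)=\min_{S\subseteq A}\bigl(f(S)+\card{A\setminus S}\bigr)$ is the standard route, and its technical core checks out. The normalization $f(\emptyset)=0$ is harmless (lowering $f(\emptyset)$ preserves monotonicity and submodularity), the uncrossing identity you invoke does hold with equality when $S\subseteq A$ and $T\subseteq B$, the unit-growth and monotonicity computations are right, and $r(A)=\card{A}$ is indeed equivalent to $\card{S}\le f(S)$ for all $S\subseteq A$. (You have also silently corrected the paper's displayed inequality, which reads $f(E'')\le\card{E''}$ but must mean $\card{E''}\le f(E'')$ to be consistent with how the proposition is applied in \lemref{fmat} and \lemref{222matroid}.)

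The gap is your final sentence identifying the bases: ``Conversely, such sets are maximal'' is asserted without proof and is false for a general $f$ satisfying the stated hypotheses. Take $E=\{a,b\}$ with $f(\emptyset)=0$ and $f(S)=2$ for $S\neq\emptyset$: this $f$ is non-negative, monotone, integer-valued and submodular, every subset is hereditarily $f$-sparse, and both $\{a\}$ and $\{a,b\}$ satisfy $f(E')=f(E)$, so the displayed family $\mathcal{B}_f$ contains sets of different cardinalities and cannot be the basis family of any matroid. Your forward direction correctly shows that every maximal independent set lies in $\mathcal{B}_f$; the reverse containment fails because an independent $E'$ with $f(E')=f(E)$ only satisfies $\card{E'}\le f(E')$, and when that inequality is strict you cannot conclude that $E'+e$ is dependent. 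The repair is to define the bases as the maximal hereditarily $f$-sparse sets (equivalently, those of cardinality $r(E)$), or to add the hypothesis $\card{E'}=f(E')$ to $\mathcal{B}_f$, after which $\card{E'+e}=f(E')+1>f(E'+e)$ closes the converse. This is really an imprecision in the statement you were handed rather than in your method --- in the paper's applications the missing cardinality condition is built into the definitions of $(1,1,k)$- and $(2,2,k)$-graphs ($m=n-1+k$, resp.\ $m=2n-2+2k$), so the downstream lemmas are unaffected --- but as written your argument does not establish the claim as displayed, and you should either prove the corrected statement or flag the discrepancy.
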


\begin{prop}[\edmondss][{\cite{ER66}}]\proplab{edmonds2}
Let $f$ be a non-negative, increasing, integer-valued, submodular function on the power set of a finite set $E$.
Then the matroid $\mathcal{M}_{2f}$ (in the sense of \propref{edmonds})
has as its bases the collection of subsets
\[
\mathcal{B}_{2f} = \{E'\subset E : \text{$E$ is the disjoint union of $2$ elements of $\mathcal{B}_{f}$}\}.
\]
\end{prop}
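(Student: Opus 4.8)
The plan is to deduce this directly from \propref{edmonds} by applying it to the function $2f$, and then to identify the resulting bases with disjoint unions of two bases of $\mathcal{M}_f$. First I would check that $2f$ satisfies the hypotheses of \propref{edmonds}: non-negativity, monotonicity, integer-valuedness, and submodularity are all preserved under multiplication by the positive constant $2$. Hence $\mathcal{M}_{2f}$ is a well-defined matroid whose bases are
\[
\mathcal{B}_{2f} = \{ E'\subset E : 2f(E')=2f(E) \text{ and } 2f(E'')\le \card{E''} \text{ for all } E''\subset E'\},
\]
which is exactly the collection of inclusion-maximal $E'$ with $2f(E'')\le\card{E''}$ on every subset. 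So the content of the proposition is the combinatorial identity: $E'$ is ``$2f$-sparse and $2f$-tight'' if and only if $E'$ decomposes as a disjoint union of two $f$-bases.

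For the ``if'' direction, suppose $E' = E_1 \sqcup E_2$ with $E_1,E_2\in\mathcal{B}_f$. For any $E''\subseteq E'$, write $E_i'' = E''\cap E_i$; then $\card{E''} = \card{E_1''}+\card{E_2''}\ge f(E_1'')+f(E_2'')$ (using $f$-sparsity of each $E_i$), and submodularity/monotonicity of $f$ gives $f(E_1'')+f(E_2'')\ge f(E'')+f(\emptyset)\ge 2f(E'')$ provided $f(\emptyset)\ge f(E'')$... which is false in general, so this naive bound needs care. The correct route is the standard one: $f(E_1'')\ge f(E'')$ is wrong too. Instead I would argue that $f(E_i'')\ge f(E'') - f(E_{3-i}'' )$ is not available either; the clean statement is $\card{E_1''}\ge f(E_1'')$ and $\card{E_2''}\ge f(E_2'')$, and then one uses that $E_i$ being a \emph{basis} (not just sparse) forces $f(E_1'')+f(E_2'')\ge 2f(E'')$ via the matroid union rank formula rather than submodularity alone. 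Concretely: by the matroid union theorem the rank of $E'$ in $\mathcal{M}_f\vee\mathcal{M}_f$ is $\min_{E''\subseteq E'}\big(\card{E'\setminus E''} + 2\,r_f(E'')\big)$ where $r_f$ is the rank function of $\mathcal{M}_f$, and $r_f(E'')\le f(E'')$ with equality governing tightness; combining this with $\card{E'}=2f(E)$ yields $2f(E'')\le\card{E''}$ for all $E''\subseteq E'$, and $E'$ spans, so $E'\in\mathcal{B}_{2f}$.

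For the ``only if'' direction, take $E'\in\mathcal{B}_{2f}$, so $\card{E'}=2f(E')=2f(E)$ and $2f(E'')\le\card{E''}$ throughout. I want to split $E'$ into two $f$-bases. Apply the matroid union theorem to $\mathcal{M}_f\vee\mathcal{M}_f$ restricted to $E'$: its rank is $\min_{E''\subseteq E'}(\card{E'\setminus E''}+2r_f(E''))$. Using $r_f(E'')\le f(E'')$ and the inequality $2f(E'')\le\card{E''}$, each term is $\ge\card{E'\setminus E''}+\card{E''} \ge$ ... again I must be careful, since $2r_f(E'')$ could be strictly less than $2f(E'')$. The key point is that inside $E'$ we have $r_f(E'') = \min_{E'''\subseteq E''}(\card{E''\setminus E'''}+f(E'''))$, and the $2f$-sparsity of $E'$ forces these minima to behave so that the union rank equals $\card{E'}$; hence $E'$ is independent in $\mathcal{M}_f\vee\mathcal{M}_f$, i.e. $E' = E_1\sqcup E_2$ with each $E_i$ independent in $\mathcal{M}_f$. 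Finally $\card{E_1}+\card{E_2}=\card{E'}=2f(E)$ and $\card{E_i}\le f(E)$ (independence bound) force $\card{E_i}=f(E)$, so each $E_i$ is a basis of $\mathcal{M}_f$, as required. The main obstacle, and the step I would write most carefully, is the bookkeeping in these two applications of the matroid union (Nash--Williams) rank formula: reconciling $r_f(E'')\le f(E'')$ with the tightness/sparsity hypotheses so that the $\min$ in the union formula is attained at $E''=E'$ (or $E''=\emptyset$) and equals $\card{E'}$. Everything else is routine verification that $2f$ inherits the submodular-function axioms.
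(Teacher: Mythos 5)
First, a point of context: the paper does not prove this proposition at all --- it is quoted from Edmonds--Rota \cite{ER66} and used as a black box, so there is no in-paper argument to compare against. Your instinct to reduce everything to the matroid union (Nash--Williams--Edmonds) rank formula is the standard and correct one, but as written your text is not a proof: it contains several explicitly acknowledged false starts, and the two steps that carry all of the content are asserted rather than carried out.

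In the ``if'' direction you reach for submodularity when all you need is monotonicity. If $E'=E_1\sqcup E_2$ with each $E_i$ independent in $\mathcal{M}_f$, then for $E''\subseteq E'$ and $E_i''=E''\cap E_i$ you have $\card{E_i''}\le f(E_i'')\le f(E'')$ because $f$ is increasing and $E_i''\subseteq E''$; summing gives $\card{E''}\le 2f(E'')$ at once, and $\card{E'}=2f(E)$ follows from each $E_i$ being a basis. Your attempted bound $f(E_1'')+f(E_2'')\ge f(E'')+f(\emptyset)$ points the wrong way, and the retreat to ``the matroid union rank formula rather than submodularity alone'' is unnecessary for this direction. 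In the ``only if'' direction the crux is exactly the inequality you flag and then skip: you must show that $2f$-sparsity of $E'$ (i.e.\ $\card{B}\le 2f(B)$ for all $B\subseteq E'$) forces $2r_f(A)\ge\card{A}$ for every $A\subseteq E'$, so that the union rank $\min_{A\subseteq E'}\bigl(\card{E'\setminus A}+2r_f(A)\bigr)$ equals $\card{E'}$. This does follow, but only after you expand $r_f(A)=\min_{B\subseteq A}\bigl(\card{A\setminus B}+f(B)\bigr)$ and compute $2r_f(A)=\min_{B}\bigl(2\card{A\setminus B}+2f(B)\bigr)\ge\min_{B}\bigl(\card{A\setminus B}+\card{B}\bigr)=\card{A}$; without that computation, ``the sparsity forces these minima to behave'' is precisely the gap. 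You also owe a final sentence on why each part $E_i$ of the resulting partition has $\card{E_i}=f(E)$ and hence is a basis rather than merely independent; this uses that the rank of $\mathcal{M}_f$ equals $f(E)$, which holds here because $\mathcal{B}_f$ as defined in \propref{edmonds} is assumed nonempty. With those repairs the argument goes through.
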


\subsection{Main combinatorial lemmas}
We have two key combinatorial results. The first one shows that the sparsity counts
give rise to a matroidal family of graphs.  We define the resulting matroids to be the
\emph{$(2,2,k)$-matroids}.

\begin{lemma}\lemlab{222matroid}
Let ${\bm{\gamma}}$ be a coloring of $K_{n}^{6,4}$ with $\mathbb{Z}^2$-rank $k$.  Then, the family of
$(2,2,k)$-colored-graphs, if non-empty, forms the bases of a matroid on $(K_{n}^{6,4},{\bm{\gamma}})$.
\end{lemma}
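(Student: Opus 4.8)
The plan is to exhibit the $(2,2,k)$-colored-graphs as the bases $\mathcal{B}_f$ of a matroid in the sense of \propref{edmonds}, for a suitable function $f$ on the edge set of $(K_n^{6,4},{\bm{\gamma}})$. The natural candidate is
\[
f(E') = 2n'(E') - 2 + 2\operatorname{rk}_{\Z^2}(E') - 2(c'(E')-1),
\]
where $n'(E')$, $c'(E')$, and $\operatorname{rk}_{\Z^2}(E')$ are, respectively, the number of vertices spanned, the number of connected components, and the $\Z^2$-rank of the subgraph edge-induced by $E'\subset E(K_n^{6,4})$, with the convention $f(\emptyset)=0$. By definition, $(2,2,k)$-sparsity says exactly $|E''|\le f(E'')$ for all $E''\subset E'$, and being a $(2,2,k)$-graph additionally forces $|E'| = 2n - 2 + 2k = f(E)$ (using that the full graph is connected and spans all $n$ vertices with $\Z^2$-rank $k$). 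So once we verify that $f$ is non-negative, monotone increasing, integer-valued, and submodular, \propref{edmonds} immediately gives that $\mathcal{B}_f$ is the set of bases of a matroid, and $\mathcal{B}_f$ is precisely the family of $(2,2,k)$-colored-graphs, which is non-empty by hypothesis.

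The real work is checking the hypotheses of \propref{edmonds}. Integer-valuedness is obvious, and monotonicity is routine: adding an edge can only increase $n'$, weakly decrease $c'$, and (by \lemref{z2-rank-01} / \lemref{z2-rank}) weakly increase $\operatorname{rk}_{\Z^2}$, and a short case check on which of these quantities moves shows $f$ never decreases. Non-negativity follows because $f(\emptyset)=0$ and $f$ is increasing. The crux is \textbf{submodularity}: $f(E_1\cup E_2) + f(E_1\cap E_2) \le f(E_1) + f(E_2)$. Here $f$ is built from three pieces: $2n'$, $-2(c'-1)$, and $2\operatorname{rk}_{\Z^2}$. The combination $2n' - 2(c'-1)$ is (twice) the rank function of the graphic matroid shifted by a constant, hence submodular by standard arguments (this is exactly the mechanism behind the classical $(k,\ell)$-sparsity matroids). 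The term $2\operatorname{rk}_{\Z^2}$ is where the genuinely new content lies: I would argue that $\operatorname{rk}_{\Z^2}$ is itself submodular on edge subsets. The cleanest route is via \lemref{z2-basis}: $\rho$ extends to a homomorphism $\HH_1(G,\Z)\to\Z^2$, so $\operatorname{rk}_{\Z^2}(E')$ equals the dimension of the image of $\rho$ restricted to the cycle space of the subgraph induced by $E'$, which is a submodular function of $E'$ because it is a matroid-rank-like quantity (the cycle spaces of $E_1\cup E_2$ and $E_1\cap E_2$ satisfy the usual modular inequality for subspaces, and pushing forward by the linear map $\rho$ only drops rank). One must be slightly careful that the cycle space of $E_1\cap E_2$ is contained in the intersection of the cycle spaces of $E_1$ and $E_2$ — this is true because a cycle using only edges of $E_1\cap E_2$ lies in both — so the subspace-dimension inequality transfers correctly.

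I expect the submodularity of $\operatorname{rk}_{\Z^2}$, and in particular the bookkeeping needed to combine it coherently with the graphic-matroid part (so that the sum $f$ is submodular, not just each summand modulo boundary cases where components merge and $\Z^2$-rank simultaneously jumps), to be the main obstacle; the interaction terms have to be handled by a case analysis on whether $E_1\cup E_2$ fuses components of $E_1$ and of $E_2$. A convenient fallback, if the direct estimate is delicate, is to invoke the matroid-union viewpoint sketched in the introduction: recognize $2n'-2(c'-1)$ as $\mathrm{rk}_{\text{graphic}} \oplus \mathrm{rk}_{\text{graphic}}$ restricted appropriately, and $\operatorname{rk}_{\Z^2}$ as the rank function of the matroid induced by $\rho$ on the cycle space, so that $f$ is a nonnegative integer combination of rank functions of matroids on the common ground set $E(K_n^{6,4})$, hence submodular automatically. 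Either way, once submodularity is in hand, the rest is a direct application of \propref{edmonds}, and the identification $\mathcal{B}_f = \{(2,2,k)\text{-graphs}\}$ is immediate from the definitions.
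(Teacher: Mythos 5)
Your overall strategy --- apply \propref{edmonds} to the count function and read off the bases as the $(2,2,k)$-graphs --- is the same as the paper's (the paper applies it to $f=\tfrac12 g$ rather than to $g$ itself, which is immaterial). The gap is in your submodularity argument. The central claim that $\operatorname{rk}_{\Z^2}$ is itself submodular on edge subsets is false. Take two vertices joined by parallel edges $e_1,e_2$ with colors $(1,0)$ and $(0,0)$, and set $E_1=\{e_1\}$, $E_2=\{e_2\}$. Then $\operatorname{rk}_{\Z^2}(E_1)=\operatorname{rk}_{\Z^2}(E_2)=\operatorname{rk}_{\Z^2}(E_1\cap E_2)=0$ but $\operatorname{rk}_{\Z^2}(E_1\cup E_2)=1$, violating submodularity. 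The error in your ``cleanest route'' is that the relevant containment goes the wrong way for unions: while the cycle space of $E_1\cap E_2$ does sit inside the intersection of the cycle spaces, the cycle space of $E_1\cup E_2$ is in general strictly larger than the sum of the cycle spaces of $E_1$ and $E_2$ (new cycles appear that use edges of both), so the subspace modular law gives no upper bound on $\operatorname{rk}_{\Z^2}(E_1\cup E_2)$. The same issue defeats the fallback: since $\operatorname{rk}_{\Z^2}$ is not submodular, it is not the rank function of any matroid on the ground set $E(K_n^{6,4})$, so $f$ is not a nonnegative combination of matroid rank functions in the way you suggest.

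What makes the lemma true is that the failures of submodularity of $\operatorname{rk}_{\Z^2}$ occur exactly where the graphic part is strictly submodular: the $\Z^2$-rank can only jump when an added edge closes a circuit, which is precisely when $n'-c'$ fails to increase. So $(n'-c')+\operatorname{rk}_{\Z^2}$ is submodular even though the second summand is not, and this cannot be seen term by term. The paper handles it by first proving that adding a single edge increases $f=n'+\operatorname{rk}_{\Z^2}-c'$ by $0$ or $1$ (\lemref{f-increase}, a four-way case analysis on how the new edge meets the components), and then verifying the local form of submodularity $f(G''+ij)-f(G'')\ge f(G'+ij)-f(G')$ for $G''\subseteq G'$ (\lemref{submodular}), with \lemref{z2-rank} supplying the key step when the increase on the larger graph comes from a $\Z^2$-rank jump inside a connected component. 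If you replace your two submodularity arguments with an analysis of this kind, the rest of your proposal --- the application of \propref{edmonds} and the identification of the bases with the $(2,2,k)$-graphs --- goes through.
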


The second is the equivalent of the Nash-Williams-Tutte Theorem \cite{NW61,T61}
for $(2,2,\cdot)$-colored graphs. Instead of decompositions into spanning trees,
we seek decompositions into $(1,1,k)$-graphs, which we now define.  (See the figures in
\secref{11k} for some examples of $(1,1,k)$-graphs.)
\begin{figure}[htbp]
\centering
\subfigure[]{\includegraphics[width=.45\textwidth]{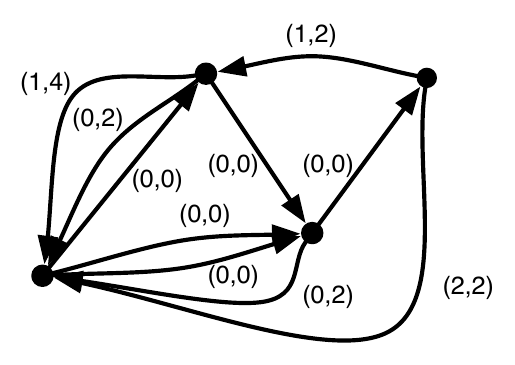}}
\subfigure[]{\includegraphics[width=.45\textwidth]{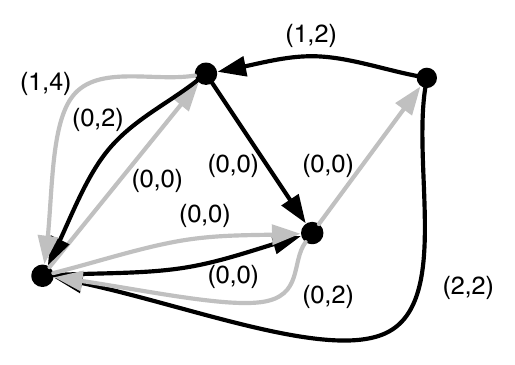}}
\caption{Example of a $(2,2,2)$-graph: (a) a $(2,2,2)$-graph; (b)
a decomposition into two $(1,1,2)$-graphs as provided by \lemref{222decomp}.
}
\label{fig:222-example}
\end{figure}

Let $(G,\bgamma)$ be a colored graph with $\Z^2$-rank $k$.  We define $(G,\bgamma)$ to be a\emph{ $(1,1,k)$-graph} if
$G$ is a spanning tree plus $k$ additional edges.  For the
purposes of this definition, we allow ``empty'' spanning trees when there is only one vertex.
\begin{lemma}\lemlab{222decomp}
A graph $(G,{\bm{\gamma}})$ is a $(2,2,k)$-graph if and only if it is the edge-disjoint union of
two spanning $(1,1,k)$-graphs.
\end{lemma}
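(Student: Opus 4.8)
The plan is to deduce \lemref{222decomp} from \lemref{fmat} and the matroid-union theorem in the form of \propref{edmonds2}. Fix the coloring $\bgamma$ of the ground set $K_n^{6,4}$ and let $k=\operatorname{rk}_{\Z^2}(K_n^{6,4},\bgamma)$ (so, as in \lemref{222matroid}, $k$ is the $\Z^2$-rank of the ambient coloring). For an edge set $E'\subseteq E(K_n^{6,4})$ define $f(E')=n'-c'+\operatorname{rk}_{\Z^2}(E',\bgamma)$, where $n'$ and $c'$ are the numbers of vertices and connected components of the subgraph induced by $E'$. One clean way to see that $f$ is non-negative, increasing, integer-valued and submodular --- the hypotheses of \propref{edmonds} --- is to note that $f(E')$ is the rank of the submatrix on rows $E'$ of the matrix over $\Q$ whose row for an oriented edge $ij$ is $(\mathbf{e}_j-\mathbf{e}_i)\oplus\gamma_{ij}\in\Q^n\oplus\Q^2$, so $f$ is a linear matroid rank function. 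By \lemref{fmat}, the bases of the matroid $\mathcal M_f$ that \propref{edmonds} builds from $f$ are exactly the $(1,1,k)$-graphs; and $f\bigl(E(K_n^{6,4})\bigr)=n-1+k$, so $\mathcal M_f$ has rank $n-1+k$.

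The first move is to feed $f$ into \propref{edmonds2}: it says the bases of $\mathcal M_{2f}$ are precisely the edge sets of the form $B_1\sqcup B_2$ with $B_1,B_2\in\mathcal B_f$, i.e.\ the edge-disjoint unions of two $(1,1,k)$-graphs. Since a $(1,1,k)$-graph contains a spanning tree of $V(K_n^{6,4})$, each $B_i$ is spanning, so these are exactly the edge-disjoint unions of two spanning $(1,1,k)$-graphs (the empty-spanning-tree convention of \secref{tdngraphs} handling the single-vertex case). Thus \lemref{222decomp} reduces to the assertion that the bases of $\mathcal M_{2f}$ are exactly the $(2,2,k)$-graphs.

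To prove that, I would unwind \propref{edmonds} applied to the function $2f$. Since $2f(E'')$ equals $2n''-2c''+2\operatorname{rk}_{\Z^2}(E'',\bgamma'')=2n''-2+2\operatorname{rk}_{\Z^2}(E'',\bgamma'')-2(c''-1)$, the independence condition for $2f$, imposed on every edge-induced subgraph $(G'',\bgamma'')$, is exactly the $(2,2,2)$-sparsity inequality $m''\le 2n''-2+2\operatorname{rk}_{\Z^2}(G'',\bgamma'')-2(c''-1)$; hence the independent sets of $\mathcal M_{2f}$ are precisely the $(2,2,2)$-sparse colored graphs. Now $\operatorname{rank}(\mathcal M_{2f})\le 2\operatorname{rank}(\mathcal M_f)=2n-2+2k$, so any $(2,2,k)$-graph, being $(2,2,2)$-sparse with $2n-2+2k$ edges, is a maximum independent set and hence a basis of $\mathcal M_{2f}$; conversely every basis of $\mathcal M_{2f}$ is $(2,2,2)$-sparse, has $2n-2+2k$ edges (the common size of the elements of $\mathcal B_{2f}$, each a union of two $(1,1,k)$-graphs with $n-1+k$ edges apiece), and has $\Z^2$-rank $k$ (it contains a spanning $(1,1,k)$-subgraph and lies in a colored graph of $\Z^2$-rank $k$), so it meets the definition of a $(2,2,k)$-graph. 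Combining this with the description of $\mathcal B_{2f}$ from \propref{edmonds2} gives the lemma.

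The nontrivial inputs --- submodularity of $f$, the identification of $\mathcal B_f$ with the $(1,1,k)$-graphs, and the matroid-union identity for $\mathcal B_{2f}$ --- are supplied by \lemref{fmat} and \propref{edmonds2}, so what remains inside this proof is essentially bookkeeping. I expect the fiddliest (but wholly routine) point to be matching the ``$-2(c''-1)$'' term of the $(2,2,2)$-count with $2f=2(n''-c'')+2\operatorname{rk}_{\Z^2}$, together with the check that a disjoint union of two $(1,1,k)$-graphs still sits inside $K_n^{6,4}$ --- no $(2,2,2)$-sparse subgraph uses more than $6$ parallel copies of an edge or $4$ self-loops, which is exactly why the ground set is built that way in \secref{tdngraphs} --- and the degenerate single-vertex case.
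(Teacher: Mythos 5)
Your proof is correct and follows essentially the same route as the paper: apply \propref{edmonds} to $2f$ to identify the bases of $\mathcal{M}_{2f}$ with the $(2,2,k)$-graphs, and \propref{edmonds2} together with \lemref{fmat} to identify them with edge-disjoint unions of two spanning $(1,1,k)$-graphs. Your aside deriving submodularity of $f$ from the rank function of an explicit rational matrix is a valid (and clean) alternative to the paper's case analysis, but since you also invoke \lemref{fmat} it does not change the structure of the argument.
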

\figref{222-example} shows an example of a $(2,2,2)$-graph and a certifying decomposition into
two $(1,1,2)$-graphs.

\subsection{Proof strategy for Lemmas \ref{lemma:222matroid} and \ref{lemma:222decomp}}
We will study the function
\[
g(G,\bgamma) = 2n + 2k - 2c
\]
where $n$, $c$ and $k$ have their usual meaning of the number of vertices, connected components,
and the $\Z^2$-rank. To show $g$ is submodular, we study the function $f = \frac{1}{2}g$,
which turns out to be the rank function of a matroid.  We then infer that
that $g$ satisfies the hypotheses of \propref{edmonds}, from
which \lemref{222matroid} and \lemref{222decomp} are  immediate.  We carry out this strategy in
Sections \ref{section:11k} and \ref{section:22k}.

Before moving on with the proof, we want to point out a subtlety in the transition from
$(2,2,2)$-graphs to the union of two $(1,1,2)$-graphs. The definition of $(2,2,2)$-graphs
makes clear that they have $\Z^2$-rank $2$.  However, \lemref{222decomp} says more: in fact,
any $(2,2,2)$-graph decomposes into two \emph{disjoint}, connected, spanning subgraphs that
each have $\Z^2$-rank $2$ in isolation.

\section{The $(1,1,k)$-matroids}\seclab{11k}
Let $(G,\bgamma)$ be a colored graph with $n$ vertices, $c$ connected components, and $\Z^2$-rank $k$.
Recall that we have defined the function
\[
f(G,{\bm{\gamma}}) = n + k - c
\]
which, after fixing a coloring of the ground set $(K_{n}^{6,4},{\bm{\gamma}})$, is a
function from subsets of the edges of $(K_{n}^{6,4})$ to the non-nagative integers.  We are going to prove:
\begin{lemma}\lemlab{fmat}
Let $\bgamma$ be a coloring of  $K_{n}^{6,4}$, and suppose that  $(K_{n}^{6,4},{\bm{\gamma}})$
has $\Z^2$-rank $k$.  Then:
\begin{itemize}
\item $f$ is non-negative, monotone, and submodular (i.e., \propref{edmonds} applies to it).
\item The matroid $M_f$ from \propref{edmonds} has as its bases $(1,1,k)$-graphs.
\end{itemize}
\end{lemma}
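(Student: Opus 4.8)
The plan is to verify the two bullet points of \lemref{fmat} directly from the definition of $f(G',{\bm{\gamma}}) = n' + \operatorname{rk}_{\mathbb{Z}^2}(G',{\bm{\gamma}}) - c'$, treating the three summands as three separate set functions on the edge set of $(K_{n}^{6,4},{\bm{\gamma}})$ and combining them. For non-negativity and monotonicity: on the empty edge set $f=0$ (every vertex is its own component, so $n'-c'=0$ and the $\Z^2$-rank is $0$), and adding a single edge $e$ to a subgraph $(G',{\bm{\gamma}})$ changes $n'-c'$ by either $0$ (if $e$ joins two vertices already in the same component, or is a self-loop) or $+1$ (if $e$ merges two distinct components or brings in a new vertex), while \lemref{z2-rank-01} (together with the observation that adding an edge never decreases the $\Z^2$-rank) ensures $\operatorname{rk}_{\Z^2}$ changes by $0$ or $+1$. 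Crucially, in the case where $n'-c'$ jumps (so $e$ is a genuine tree edge in the forest sense), $e$ lies in no cycle of $G'+e$ that it didn't already, so $\rho$ is unchanged and $\operatorname{rk}_{\Z^2}$ stays put; in the case $n'-c'$ is unchanged, $\operatorname{rk}_{\Z^2}$ goes up by at most $1$. Either way $f$ increases by $0$ or $1$ per edge added, giving monotonicity (and re-confirming non-negativity).

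For submodularity I would check $f(A\cup B) + f(A\cap B)\le f(A)+f(B)$ for edge sets $A,B$, again summand by summand. The function $A\mapsto n'(A)-c'(A)$ is the well-known matroid rank function of the graphic matroid (cycle matroid) restricted to $A$, hence submodular; this is standard and I would cite it rather than reprove it. The genuinely new ingredient is submodularity of $A\mapsto \operatorname{rk}_{\Z^2}(G'_A,{\bm{\gamma}})$, which I expect to be \textbf{the main obstacle}. The clean way to get it: by \lemref{z2-basis}, $\operatorname{rk}_{\Z^2}(G'_A,{\bm{\gamma}})$ equals the dimension of the image under $\rho$ of $\HH_1(G'_A,\Z)$ viewed inside $\Q^2$, i.e.\ the rank of the linear map $\rho_A : \HH_1(G'_A,\Z)\otimes\Q \to \Q^2$. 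Since $\HH_1(\cdot,\Q)$ of a subgraph is the cycle space, and the cycle space is itself (the $\Q$-span of) a matroid's circuit structure, one has $\HH_1(G'_{A\cap B})\subseteq \HH_1(G'_A)\cap \HH_1(G'_B)$ and $\HH_1(G'_{A\cup B})$ is spanned by $\HH_1(G'_A)+\HH_1(G'_B)$ together with at most $\dim\HH_1(G'_{A\cup B}) - \dim(\HH_1(G'_A)+\HH_1(G'_B))$ extra generators — but the last quantity is controlled by the submodularity of $n'-c'$ already in hand. Composing with the fixed linear map $\rho$ and using subadditivity/submodularity of image dimension of a linear map restricted to a submodular family of subspaces yields the bound. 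I would spell this out as: $\operatorname{rk}_{\Z^2}$ is the composition of the (submodular) graphic-style map $A\mapsto \HH_1(G'_A,\Q)$ with a fixed linear functional, and such compositions preserve the submodular inequality.

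Once $f$ is shown non-negative, monotone, integer-valued, and submodular, \propref{edmonds} applies and produces the matroid $M_f$. For the second bullet — that the bases of $M_f$ are exactly the $(1,1,k)$-graphs — I would argue as follows. A basis $B$ of $M_f$ is a subgraph with $f(B)=f(K_{n}^{6,4},{\bm{\gamma}}) = n + k - c_{\text{full}}$; since $K_{n}^{6,4}$ is connected (for $n\ge 1$), $c_{\text{full}}=1$, so $f(B) = n-1+k$, and simultaneously every subset $B''\subseteq B$ satisfies $|B''|\le f(B'')$. The upper bound $|B''|\le n''-1+\operatorname{rk}_{\Z^2}(B'')$ forces each connected component of $B$ to contain a spanning tree plus at most $\operatorname{rk}_{\Z^2}$ extra edges; combined with $f(B)=n-1+k$ and a counting argument (summing the component-wise bounds and using that equality in $f$ must be attained) we get $B$ connected, spanning, with exactly a spanning tree plus $k$ edges, and $\operatorname{rk}_{\Z^2}(B)=k$ — which is precisely the definition of a $(1,1,k)$-graph. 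Conversely, any $(1,1,k)$-graph $H$ has $|E(H)| = n-1+k = f(H)$, and the sparsity condition $|E(H'')|\le f(H'')$ for all subgraphs follows because a subgraph of a spanning-tree-plus-$k$-edges graph spans at most its own tree plus however many of the $k$ extra edges it contains, and each such extra edge raises $\operatorname{rk}_{\Z^2}$ of the subgraph by \lemref{z2-rank} (the hypothesis of that lemma — that the new edge is spanned by a connected component — is met exactly when the extra edge genuinely contributes to the $\Z^2$-rank of $H$). Hence $H\in\mathcal B_f$, completing the identification. The only delicate point here is the bookkeeping tying equality in $f$ to connectivity plus the exact edge count, which I would handle with the standard "sum the local inequalities, compare to the global equality" device used throughout $(k,\ell)$-sparsity theory.
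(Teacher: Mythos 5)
Your route is viable and genuinely different from the paper's. The paper proves submodularity in the ``diminishing returns'' form $f(G''+ij,\bgamma)-f(G'',\bgamma)\ge f(G'+ij,\bgamma)-f(G',\bgamma)$ via a four-way case analysis on how a single added edge meets the connected components (\lemref{f-increase} and \lemref{submodular}, leaning on \lemref{z2-rank}), and it identifies the bases through \lemref{fmax-11k}, which is in substance the same counting argument you give for sets of size $n-1+k$. Your global inequality $f(A\cup B)+f(A\cap B)\le f(A)+f(B)$ via cycle spaces avoids the case analysis and is arguably more conceptual; your treatment of non-negativity, monotonicity, and the basis identification all match the paper's.

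There is, however, one concrete false claim you must repair: $\operatorname{rk}_{\mathbb{Z}^2}$ is \emph{not} submodular on its own, so the ``summand by summand'' framing --- and in particular your closing sentence that $\operatorname{rk}_{\mathbb{Z}^2}$ is the composition of a submodular subspace-valued map with a fixed linear map and that ``such compositions preserve the submodular inequality'' --- fails as stated. Counterexample: two vertices joined by parallel edges $e_1,e_2$ with colors $(0,0)$ and $(1,0)$; taking $A=\{e_1\}$, $B=\{e_2\}$ gives $\operatorname{rk}_{\mathbb{Z}^2}(A\cup B)+\operatorname{rk}_{\mathbb{Z}^2}(A\cap B)=1+0 > 0+0=\operatorname{rk}_{\mathbb{Z}^2}(A)+\operatorname{rk}_{\mathbb{Z}^2}(B)$. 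The issue is exactly the one you half-acknowledge: $\HH_1(A\cup B,\Q)$ can strictly contain $\HH_1(A,\Q)+\HH_1(B,\Q)$, and each extra cycle can raise the $\mathbb{Z}^2$-rank by one. Your own sketch contains the fix and should be promoted to the main argument: since a cycle vector supported on both $A$ and $B$ is supported on $A\cap B$, one has $\HH_1(A,\Q)\cap\HH_1(B,\Q)=\HH_1(A\cap B,\Q)$, and a dimension count shows the codimension of $\HH_1(A)+\HH_1(B)$ inside $\HH_1(A\cup B)$ equals exactly the submodularity slack $s=r(A)+r(B)-r(A\cup B)-r(A\cap B)$ of the graphic rank $r=n'-c'$. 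This yields $\operatorname{rk}_{\mathbb{Z}^2}(A\cup B)+\operatorname{rk}_{\mathbb{Z}^2}(A\cap B)\le\operatorname{rk}_{\mathbb{Z}^2}(A)+\operatorname{rk}_{\mathbb{Z}^2}(B)+s$, and adding the graphic identity (whose slack is $-s$) gives submodularity of the \emph{sum} $f$, the two slacks cancelling. Written that way --- as a single inequality for $f$ rather than two separate submodularity claims --- your proof is complete and correct.
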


We start with some examples and simple structural results about $(1,1,k)$-graphs.
\subsection{Structure of $(1,1,k)$-graphs}
It is immediate from the definition of $(1,1,k)$-graphs and the $\Z^2$-rank that:
\begin{lemma}\lemlab{11kstruct}
Let $(G,\bgamma)$ be a $(1,1,k)$-graph.  Then,
\begin{itemize}
\item If $k=0$, then $G$ is a tree.
\item If $k=1$, then $G$ is a tree plus one additional edge,
and the unique cycle in $G$ has non-trivial $\Z^2$-image.
\item If $k=2$, then $G$ is a tree plus two additional edges, and
there are two cycles with linearly independent $\Z^2$-images.
\end{itemize}
\end{lemma}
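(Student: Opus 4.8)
The plan is to prove \lemref{11kstruct} directly from the definition of a $(1,1,k)$-graph and the properties of the $\Z^2$-rank established in the earlier subsections, proceeding case by case on $k\in\{0,1,2\}$. Recall that $(G,\bgamma)$ being a $(1,1,k)$-graph means $G$ is a spanning tree together with exactly $k=\operatorname{rk}_{\Z^2}(G,\bgamma)$ additional edges; in particular $G$ is connected and $m=n-1+k$, so $\HH_1(G,\Z)$ has rank exactly $k$. The key input is \lemref{z2-basis}: for any basis $B$ of $\HH_1(G,\Z)$, exactly $k$ of the cycles in $B$ have $\Z^2$-independent images (here ``exactly $k$'' because the $\Z^2$-rank is $k$ and $\rho$ is well-defined on $\HH_1(G,\Z)$).

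First I would handle $k=0$. Then $m=n-1$ and $G$ is connected with $n$ vertices, hence a spanning tree; there is nothing further to check (and the empty-tree convention for $n=1$ causes no trouble). For $k=1$: $m=n$, so $G$ is a tree plus one edge and has a unique cycle $C$, which forms a basis of $\HH_1(G,\Z)$. By \lemref{z2-basis} applied to this one-element basis, since the $\Z^2$-rank is $1$, the single cycle $C$ must have $\rho(C)\neq 0$, i.e.\ non-trivial $\Z^2$-image. For $k=2$: $m=n+1$, so $G$ is a tree plus two additional edges and $\HH_1(G,\Z)$ has rank $2$. Pick a spanning tree $F\subset G$ (the ``base'' tree from the definition, or any spanning tree), and let $e_1,e_2$ be the two remaining edges; the fundamental cycles $C_1,C_2$ they form with $F$ give a basis of $\HH_1(G,\Z)$. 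Since the $\Z^2$-rank is $2$, \lemref{z2-basis} forces $\rho(C_1),\rho(C_2)$ to be linearly independent in $\Z^2$, which is exactly the claimed statement about two cycles with linearly independent $\Z^2$-images.

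I do not expect any real obstacle here — the lemma is genuinely ``immediate from the definitions,'' as the paper says — but the one point that requires a sentence of care is the well-definedness of $\rho$ as a linear map on $\HH_1(G,\Z)$, which lets us conclude that for a $(1,1,k)$-graph the number of $\Z^2$-independent basis cycles is precisely $k$ (not merely at most $k$); this is supplied by the computation $\rho(C_1\Delta C_2)=\rho(C_1)+\rho(C_2)$ inside the proof of \lemref{z2-basis}. With that in hand, each case is a one-line deduction, so the proof is short and the writeup can simply cite \lemref{z2-basis} after reducing to the fundamental-cycle basis of $\HH_1(G,\Z)$.
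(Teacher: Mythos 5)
Your proposal is correct and matches the paper's intent: the paper states this lemma with no separate proof, declaring it ``immediate from the definition of $(1,1,k)$-graphs and the $\Z^2$-rank,'' and your argument simply unpacks that claim by passing to the fundamental-cycle basis of $\HH_1(G,\Z)$ and invoking \lemref{z2-basis}. The one point you flag---that $\rho$ is well-defined and linear on $\HH_1(G,\Z)$, so a rank-$k$ image forces exactly $k$ independent basis cycles---is indeed the only content, and it is correctly supplied.
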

\begin{figure}[htbp]
\centering
\includegraphics[width=.4\textwidth]{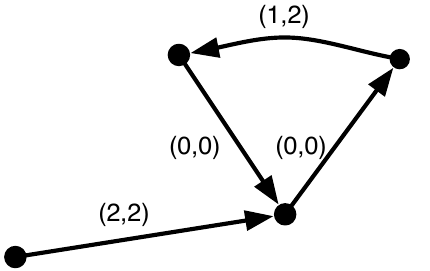}
\caption{A $(1,1,1)$-graph.}
\label{fig:111-example}
\end{figure}
\figref{111-example} shows an example of a $(1,1,1)$-graph.  \figref{112-example} shows an
example of two types of $(1,1,2)$-graphs.  It is not hard to see that every $(1,1,2)$-graph
contains a subdivision of either a vertex with two-self loops, three copies of a single edge (e.g., \figref{112-example} (a)),
or an edge with a self-loop on each endpoint (e.g., \figref{112-example} (b)).
\begin{figure}[htbp]
\centering
\subfigure[]{\includegraphics[width=.4\textwidth]{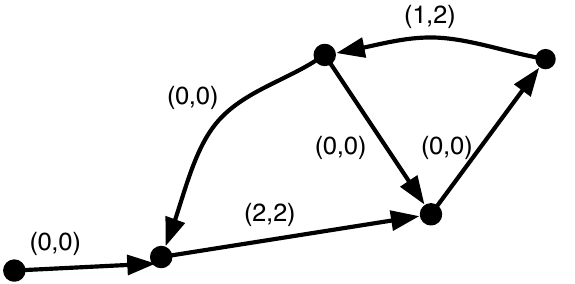}}
\subfigure[]{\includegraphics[width=.4\textwidth]{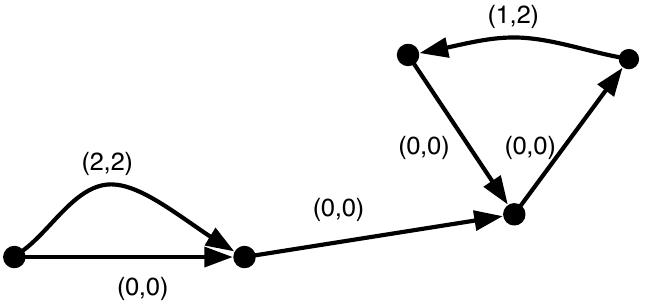}}
\caption{Examples of $(1,1,2)$-graphs.}
\label{fig:112-example}
\end{figure}

\subsection{An analogy to spanning trees}
If we consider the developments of $(1,1,k)$-graphs to periodic graphs, we can make an analogy
to connectivity in finite graphs.  We don't rely on \lemref{11k-lifts}, so we simply state it.
However, readers who are familiar with the role played by trees in proofs of combinatorial rigidity
characterizations may well find it instructive.
\begin{lemma}\lemlab{11k-lifts}
Let $(G,\bgamma)$ be a $(1,1,k)$-graph, and let $(\tilde{G},\varphi)$ be the development of
$(G,\bgamma)$.  Then removing any edge of $G$, disconnects every connected component of $\tilde{G}$.  Moreover, the
development is connected if and only if $k=2$ and $\rho(G,\bgamma)$ is all of $\Z^2$.
\end{lemma}
\lemref{11k-lifts} makes the connection between spanning trees (which are bases of the graphic matroid) and
$(1,1,2)$-graphs (which are bases of a matroid on colored graphs):
trees are minimally connected finite graphs and $(1,1,2)$-graphs have
minimally connected periodic developments if $\rho(G,\bgamma)$ generates $\Z^2$.

\subsection{Maximizers of $f$}
We now begin our study of the function $f$ in more detail, first considering the graphs maximizing the
function $f$.  These will turn out to be $(1,1,k)$-graphs for a $\Z^2$-rank $k$ coloring of $K_{n}^{6,4}$.

\begin{lemma}\lemlab{fmax-value}
Let $\bgamma$ be a coloring of  $K_{n}^{6,4}$, and suppose that  $(K_{n}^{6,4},{\bm{\gamma}})$
has $\Z^2$-rank $k$.  The maximum value of $f(G',\bgamma)$, over any colored sub-graph $(G',\bgamma)$
of $(K_{n}^{6,4},{\bm{\gamma}})$ is $n+k-1$.
\end{lemma}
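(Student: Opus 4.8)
The plan is to establish the upper bound $f(G',\bgamma) \le n+k-1$ for all subgraphs and then exhibit a subgraph achieving it. For the upper bound, recall $f(G',\bgamma) = n' + \operatorname{rk}_{\Z^2}(G',\bgamma) - c'$. Since $(K_n^{6,4},\bgamma)$ has $n$ vertices, we always have $n' \le n$. The subtle interaction is between $c'$ and the $\Z^2$-rank: a subgraph that spans all $n$ vertices in a single connected component ($n'=n$, $c'=1$) can have $\Z^2$-rank at most $k$ by definition of the $\Z^2$-rank of the ambient graph, giving $f \le n + k - 1$. But if $G'$ is disconnected, each component $G'_t$ contributes $n'_t + \operatorname{rk}_{\Z^2}(G'_t,\bgamma) - 1$, so $f(G',\bgamma) = \sum_t \big(n'_t + \operatorname{rk}_{\Z^2}(G'_t,\bgamma) - 1\big)$. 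The point is that summing ranks of components can only lose compared to the rank of the union: by \lemref{z2-rank} (and \lemref{z2-basis}), picking a spanning forest and building the homology basis componentwise, the images $\rho(G'_t,\bgamma)$ all lie inside $\rho(K_n^{6,4},\bgamma)$, whose generated subgroup has rank $k$, and the sum of the ranks of the $\rho(G'_t,\bgamma)$ is at most the rank of the subgroup they jointly generate, which is at most $k$. Combining, $f(G',\bgamma) = \sum_t n'_t + \sum_t \operatorname{rk}_{\Z^2}(G'_t,\bgamma) - c' \le n + k - c' \le n + k - 1$, using $c'\ge 1$ whenever $G'$ is nonempty (and the degenerate empty case gives $f=0 \le n+k-1$).

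For the matching lower bound, I would construct a connected spanning subgraph of $K_n^{6,4}$ with $\Z^2$-rank exactly $k$: start with any spanning tree $T$ of the underlying graph (this uses the multi-edges not at all, just $n-1$ ordinary edges, and gives $\operatorname{rk}_{\Z^2}=0$, $c'=1$, $f=n-1$), then add edges one at a time. Since the ambient coloring has $\Z^2$-rank $k$, there exist $k$ cycles in $K_n^{6,4}$ with linearly independent $\Z^2$-images; each such cycle shares its non-tree edges with $T$-plus-previously-added edges, and by \lemref{z2-rank-01} each addition raises the $\Z^2$-rank by at most one, while choosing the right edges (those completing a cycle whose image escapes the current span) raises it by exactly one. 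After $k$ such additions we have a connected spanning graph $(G',\bgamma)$ with $n'=n$, $c'=1$, $\operatorname{rk}_{\Z^2}(G',\bgamma)=k$, hence $f(G',\bgamma) = n + k - 1$. This also previews the fact, to be used in \lemref{fmat}, that the maximizers are precisely $(1,1,k)$-graphs: the construction produces a spanning tree plus exactly $k$ edges.

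The main obstacle is the rank inequality $\sum_t \operatorname{rk}_{\Z^2}(G'_t,\bgamma) \le k$ across components — i.e., verifying that the $\Z^2$-rank is "subadditive over components" relative to the ambient rank. This is where one must be careful that the $\rho$-images, computed with respect to independently chosen homology bases of the components, genuinely land in the ambient subgroup and that their ranks add up correctly; the tools are exactly \lemref{z2-basis} (which gives that $\rho$ descends to a homomorphism on $\HH_1$, so ranks behave linear-algebraically) and the fact that each $\rho(G'_t,\bgamma) \subseteq \rho(K_n^{6,4},\bgamma)$. Once that is in hand, both directions are short.
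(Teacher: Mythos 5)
Your maximizer construction (a spanning tree plus $k$ non-tree edges whose fundamental cycles have independent $\rho$-images, via \lemref{z2-basis}) is fine, and the statement is true, but the componentwise argument you give for the upper bound contains two false steps. First, the identity $f(G',\bgamma)=\sum_t\bigl(n'_t+\operatorname{rk}_{\mathbb{Z}^2}(G'_t,\bgamma)-1\bigr)$ presupposes that the $\Z^2$-rank of a disconnected graph is the sum of the ranks of its components; by definition it is the rank of the subgroup of $\Z^2$ generated by \emph{all} cycle images jointly, which can be strictly smaller (two components each carrying a loop colored $(1,0)$ have joint rank $1$, not $2$). Second, and more seriously, the inequality ``the sum of the ranks of the $\rho(G'_t,\bgamma)$ is at most the rank of the subgroup they jointly generate'' is backwards: the rank of the join is at most the \emph{sum} of the ranks, not the other way around. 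In that same example with $n=2$ and $k=1$, your chain would read $f(G',\bgamma)=2\le n+k-c'=1$, which fails at both links; the conclusion $f(G',\bgamma)\le n+k-1=2$ survives only because the direct argument happens to work.

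That direct argument is all the lemma needs, and it is what the paper means by ``immediate from the definition'': $n'\le n$ since $G'$ is a subgraph of $K_n^{6,4}$; $\operatorname{rk}_{\mathbb{Z}^2}(G',\bgamma)\le k$ since every cycle of $G'$ is a cycle of $K_n^{6,4}$ with the same $\rho$-image, so $\rho(G',\bgamma)\subseteq\rho(K_n^{6,4},\bgamma)$ and the generated subgroup can only be smaller; and $c'\ge 1$. Hence $f(G',\bgamma)\le n+k-1$ term by term, with no decomposition into components required. Your lower-bound construction then completes the proof; the characterization of which graphs attain the maximum is deferred (correctly) to \lemref{fmax-connected} and \lemref{fmax-11k}.
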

\begin{proof}
Immediate from the definition.
\end{proof}

\begin{lemma}\lemlab{fmax-connected}
Let $\bgamma$ be a coloring of  $K_{n}^{6,4}$, and suppose that  $(K_{n}^{6,4},{\bm{\gamma}})$
has $\Z^2$-rank $k$.  Suppose that $(G',\bgamma)$ is a colored subgraph of $(K_{n}^{6,4},{\bm{\gamma}})$ and
$f(G',\bgamma)=n+k-1$.  Then $G'$ is connected.
\end{lemma}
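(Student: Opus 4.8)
The plan is to argue by contradiction: suppose $(G',\bgamma)$ achieves the maximum value $f(G',\bgamma)=n+k-1$ but is disconnected, and derive that $f$ could be made strictly larger by adding an edge, contradicting \lemref{fmax-value}. Since the ground set $(K_{n}^{6,4},\bgamma)$ has $\Z^2$-rank $k$, there are $k$ independent vectors in $\rho(K_{n}^{6,4},\bgamma)$; pick cycles $D_1,\dots,D_k$ realizing them. If $G'$ is disconnected, I want to add an edge of $K_{n}^{6,4}-G'$ that either merges two components (decreasing $c'$ by one, hence raising $n' + \rk(G',\bgamma) - c'$ by one) or that raises the $\Z^2$-rank of a component, in either case increasing $f$.

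The key step is a counting/case analysis on how the $\Z^2$-rank $k' = \rk(G',\bgamma)$ is distributed among the components of $G'$. Write $G'$ as the disjoint union of its components $H_1,\dots,H_{c'}$ with ranks $k_1,\dots,k_{c'}$; by \lemref{z2-rank} applied componentwise, $k' \le \sum_i k_i$ (in fact the cycle space of $G'$ is the direct sum of those of the $H_i$, so $k' = \sum_i k_i$ since $\rho$ is a homomorphism on $\HH_1$ by \lemref{z2-basis}). Now $f(G',\bgamma) = n' + k' - c' = \sum_i (n_i + k_i - 1) = \sum_i f(H_i,\bgamma)$, and each term satisfies $f(H_i,\bgamma)\le n_i + \min(k_i,2) - 1 \le n_i+1$. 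For the sum to equal $n+k-1$ we need $\sum_i n_i = n$ (so $G'$ spans all $n$ vertices) and $\sum_i k_i = k$. If $c'\ge 2$, there are two cases. First, if some component $H_i$ has $k_i < 2$: then because the whole ground set has $\Z^2$-rank $k\ge k_i$, either $k<2$ and we can instead pick an edge joining two components — a spanning edge between $H_i$ and $H_j$ always exists in $K_{n}^{6,4}$ and lowers $c'$ — or there is a colored edge inside $H_i$'s vertex set raising its rank, and by \lemref{z2-rank} this raises $\rk(G',\bgamma)$. Either way $f$ strictly increases. Second, the remaining case is that every component has $k_i = 2$ and hence (by \lemref{11kstruct} / the definition) is already a $(1,1,2)$-graph spanning its vertices; but then merging any two of them by an edge of $K_{n}^{6,4}$ drops $c'$ by one while $k'$ stays at $k$ (it is already $2$ and cannot exceed $2$), again increasing $f$ by one.

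I expect the main obstacle to be the bookkeeping in the mixed case where some components have rank $2$ and others have smaller rank: one must be careful that adding an edge to merge a rank-$2$ component with a lower-rank component does not accidentally fail to increase $f$, and that "spanning" edges with the right endpoints actually exist in $K_{n}^{6,4}$ (this is where the $6$ copies of each edge and $4$ self-loops matter, but since $G'$ is a proper subgraph of the complete graph there is always room). A clean way to package all of this is to note $f(G',\bgamma) = \sum_i f(H_i,\bgamma)$ and observe that if $c'\ge 2$ then adding any edge between two components $H_a,H_b$ gives a graph $H'$ with $f(H',\bgamma) \ge f(H_a,\bgamma)+f(H_b,\bgamma)+1$ unless adding that edge would violate $f \le |\cdot|$ in a way that is ruled out — but in fact there is no such obstruction since we only need \emph{some} maximizer to be connected, and \lemref{fmax-value} already guarantees the maximum $n+k-1$ is attained; the contradiction is simply that we have exhibited a subgraph with $f$-value $n+k$. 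So the proof reduces to: disconnected $\Rightarrow$ can add an edge $\Rightarrow$ $f$ exceeds $n+k-1$, contradicting \lemref{fmax-value}.
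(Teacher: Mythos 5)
Your overall strategy---disconnected implies you can add an edge and push $f$ above the maximum guaranteed by \lemref{fmax-value}---does work, but it is a long detour compared with what is needed, and one intermediate claim is false. The paper's proof is a direct estimate: since $G'$ spans at most $n$ vertices and has $\Z^2$-rank at most $k$, we have $f(G',\bgamma)=n'+\operatorname{rk}_{\mathbb{Z}^2}(G',\bgamma)-c'\le n+k-c'$, so $f(G',\bgamma)=n+k-1$ forces $c'=1$. No edge additions, no component decomposition, no appeal to \lemref{f-increase}.

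The false claim in your write-up is $\operatorname{rk}_{\mathbb{Z}^2}(G',\bgamma)=\sum_i k_i$ for the components $H_i$. The cycle space of $G'$ is indeed the direct sum of the cycle spaces of the $H_i$, but the $\Z^2$-rank is the rank of the subgroup of $\Z^2$ generated by the image of $\rho$, and ranks of subgroups of $\Z^2$ do not add: two components of rank $1$ each can generate the same rank-$1$ subgroup, and $\sum_i k_i$ can exceed $2$ while $\operatorname{rk}_{\mathbb{Z}^2}(G',\bgamma)\le 2$ always. Consequently $f(G',\bgamma)=\sum_i f(H_i,\bgamma)$ holds only as an inequality ($\le$), and the case analysis built on the equality (``every component has $k_i=2$,'' etc.) is not sound as stated. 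Fortunately none of it is needed: your closing packaging---if $c'\ge 2$, the components are vertex-disjoint, so every copy of an edge joining two of them is absent from $G'$ and available in $K_n^{6,4}$, and by Case 3 of \lemref{f-increase} adding one increases $f$ by exactly one, exhibiting a subgraph with $f$-value $n+k$ and contradicting \lemref{fmax-value}---is correct on its own and makes the rank bookkeeping superfluous. Either trim the argument to that one step, or better, replace it with the one-line counting bound above.
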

\begin{proof}
$G'$ spans at most $n$ vertices and has $\Z^2$-rank at most $k$, and thus the contribution
of the positive terms to $f(G',\bgamma)$ is at most $n+k$.  For $f(G',\bgamma)$ to be $n+k-1$,
we must then have the number of connected components $c'$ equal to one.
\end{proof}

\begin{lemma}\lemlab{fmax-11k}
Let $\bgamma$ be a coloring of  $K_{n}^{6,4}$, and suppose that  $(K_{n}^{6,4},{\bm{\gamma}})$
has $\Z^2$-rank $k$.  Let  $(G',\bgamma)$ be any colored sub-graph of
$(K_{n}^{6,4},{\bm{\gamma}})$ with this coloring and $n-1+k$ edges.  Then $f(G',\bgamma)=n+k-1$ if
and only if $(G',\bgamma)$ is a $(1,1,k)$-graph.
\end{lemma}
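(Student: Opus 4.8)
The plan is to deduce \lemref{fmax-11k} directly from the two preceding lemmas, together with the elementary fact that a connected graph on $n$ vertices with exactly $n-1+k$ edges is a spanning tree plus $k$ extra edges. Throughout I will use that for any (edge-induced) subgraph $(G',\bgamma)$ of $(K_{n}^{6,4},\bgamma)$ the $\Z^2$-rank can only drop: every cycle of $G'$ is a cycle of $K_{n}^{6,4}$, so $\rho(G',\bgamma)\subseteq\rho(K_{n}^{6,4},\bgamma)$ and hence $\operatorname{rk}_{\Z^2}(G',\bgamma)\le k$; also of course $n'\le n$ and $c'\ge 1$.

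For the forward direction, suppose $(G',\bgamma)$ has $n-1+k$ edges and $f(G',\bgamma)=n'+\operatorname{rk}_{\Z^2}(G',\bgamma)-c'=n+k-1$. Combining $n'\le n$, $\operatorname{rk}_{\Z^2}(G',\bgamma)\le k$, and $c'\ge 1$, this maximum (which is the value identified in \lemref{fmax-value}) can be attained only if $n'=n$, $\operatorname{rk}_{\Z^2}(G',\bgamma)=k$, and $c'=1$; the last point is exactly \lemref{fmax-connected}. Thus $G'$ is connected, spans all $n$ vertices, and has $n-1+k$ edges, so choosing any spanning tree of $G'$ and declaring the remaining $k$ edges ``additional'' exhibits $G'$ as a spanning tree plus $k$ edges. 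Since moreover $\operatorname{rk}_{\Z^2}(G',\bgamma)=k$, this is precisely a $(1,1,k)$-graph.

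Conversely, if $(G',\bgamma)$ is a $(1,1,k)$-graph then by definition $G'$ consists of a spanning tree on all $n$ vertices together with $k=\operatorname{rk}_{\Z^2}(G',\bgamma)$ further edges; in particular $G'$ is connected, $n'=n$, and $\operatorname{rk}_{\Z^2}(G',\bgamma)=k$, whence $f(G',\bgamma)=n+k-1$. (The edge count $n-1+k$ assumed in the statement is automatically consistent, since a spanning tree contributes $n-1$ edges.)

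I do not expect a genuine obstacle here: every step is a short bookkeeping argument layered on \lemref{fmax-value} and \lemref{fmax-connected}. The only point that needs care is keeping the two roles of $k$ separate — the fixed $\Z^2$-rank of the ground graph $K_{n}^{6,4}$, which bounds $\operatorname{rk}_{\Z^2}(G',\bgamma)$ from above, versus the surplus of edges over a spanning tree, which the edge count forces to equal $k$ — and then noticing that it is precisely the maximality of $f$ that pins $\operatorname{rk}_{\Z^2}(G',\bgamma)$ to the value $k$, so that the ``spanning tree plus $k$ edges'' description of $G'$ matches the definition of a $(1,1,k)$-graph on the nose.
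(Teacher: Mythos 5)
Your proof is correct and follows essentially the same route as the paper: connectivity comes from \lemref{fmax-connected}, the edge count exhibits $G'$ as a spanning tree plus $k$ extra edges, and the value of $f$ pins $\operatorname{rk}_{\Z^2}(G',\bgamma)$ to $k$. The only difference is cosmetic — you extract the rank condition uniformly from the inequality $f(G',\bgamma)\le n'+k-1\le n+k-1$, whereas the paper runs a short case analysis over $k=0,1,2$ via the cycle structure; your version is, if anything, slightly cleaner.
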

\begin{proof}
First, we suppose that $f(G',\bgamma)=n+k-1$.  This means that \lemref{fmax-connected} applies, so $G'$ is connected, and
thus some $n-1$ of the edges of $G'$ form a spanning tree $T$ of $G'$.  If $k=0$, then we are done.  If $k=1$,
then there is one more edge $ij$, which creates a unique cycle $C$ composed of $ij$ and the path from $i$ to $j$
in $T$.  Since $f(G',\bgamma)=n$, $\rho(C)\neq (0,0)$, which implies that $(G',\bgamma)$ is a $(1,1,1)$-graph.  If $k=2$,
then $G'$ has two additional edges in addition to $T$, and $f(G',\bgamma)=n+1$ implies that there are two cycles with linearly
independent $\Z^2$-image in $(G',\bgamma)$, so it is a $(1,1,2)$-graph.

In the other direction, it is immediate from the definitions that if $(G',\bgamma)$ is a $(1,1,k)$-graph,
then $f(G',\bgamma)=n+k-1$.
\end{proof}

\subsection{Submodularity of $f$}
Next we show that $f$ meets the hypotheses of \propref{edmonds}.
We will verify the following form of the submodular inequality: let $(G',\bgamma)$ be a colored graph and let $(G'',\bgamma)$
be a subgraph of $(G',\bgamma)$.  To show that $f$ is submodular, it is enough to prove that for any colored edge $ij\notin E(G')$:
\[
f(G''+ij,{\bm{\gamma}}) - f(G'',{\bm{\gamma}}) \ge f(G'+ij,{\bm{\gamma}}) - f(G',{\bm{\gamma}})
\]
Before proving that $f$ is submodular, we need the following lemma.
\begin{lemma}\lemlab{f-increase}
Let $\bgamma$ be a coloring of  $K_{n}^{6,4}$.
Let  $(G',\bgamma)$ be any colored sub-graph of $(K_{n}^{6,4},{\bm{\gamma}})$,
and let $ij$ be any colored edge of the ground set not in $E(G')$.  Then $f(G'+ij,\bgamma)-f(G',\bgamma)$
is either zero or one.
\end{lemma}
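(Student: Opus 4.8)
The plan is to track how each of the three terms in $f(G',\bgamma)=n'+\operatorname{rk}_{\mathbb{Z}^2}(G',\bgamma)-c'$ changes when the single edge $ij$ is added, and then run through a short list of cases. Write $n''$, $c''$ for the vertex and component counts of $G'+ij$; then $f(G'+ij,\bgamma)-f(G',\bgamma)=(n''-n')-(c''-c')+\Delta$, where $\Delta:=\operatorname{rk}_{\mathbb{Z}^2}(G'+ij,\bgamma)-\operatorname{rk}_{\mathbb{Z}^2}(G',\bgamma)\ge 0$ since adding edges only enlarges the cycle space. The only non-bookkeeping input is that $\Delta\le 1$; the rest is counting $(n''-n')$ and $(c''-c')$, for which I would use the Euler-type identity $\dim\HH_1(G',\Z)=m'-n'+c'$ for multigraphs (a self-loop counts as one edge and contributes one to $\dim\HH_1$, consistent with treating it as a cycle on one vertex).

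First I would split according to whether adding $ij$ leaves $\dim\HH_1$ unchanged or increases it by exactly one (it cannot increase by more, since $G'+ij$ has exactly one more edge than $G'$). If $\dim\HH_1$ is unchanged, then $ij$ lies on no cycle of $G'+ij$: it is a bridge, a pendant edge to one new vertex, or a disjoint edge on two new vertices. In each of these sub-cases every cycle of $G'+ij$ is already a cycle of $G'$, so $\Delta=0$; and comparing $\dim\HH_1$ before and after forces $(n''-n')-(c''-c')=1$ (both endpoints new: $(2,1)$; one endpoint new: $(1,0)$; $i$ and $j$ in distinct components: $(0,-1)$). Hence $f$ increases by exactly $1$ in this case.

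In the remaining case $\dim\HH_1$ increases by one, so a basis of $\HH_1(G'+ij,\Z)$ is obtained from a basis of $\HH_1(G',\Z)$ by adjoining a single cycle through $ij$ — the same observation used in \lemref{z2-basis} and \lemref{z2-rank-01} — whence $\Delta\in\{0,1\}$. The only sub-cases here are ``$i$ and $j$ lie in the same component of $G'$'' and ``$ij$ is a self-loop on a new vertex,'' and in both one checks $(n''-n')-(c''-c')=0$, so $f(G'+ij,\bgamma)-f(G',\bgamma)=\Delta\in\{0,1\}$. Combining the two cases gives the claim.

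I do not expect a genuine obstacle: the content is a finite case check. The single non-trivial fact — that one edge raises the $\Z^2$-rank by at most one — is precisely \lemref{z2-rank-01} when the relevant component is connected and is immediate otherwise, since a bridge or an edge to a new vertex creates no cycle at all. The only points requiring care are that subgraphs are edge-induced (so $ij$ may introduce one or two genuinely new vertices, which must be counted) and that self-loops need their own small case, being cycles on a single vertex.
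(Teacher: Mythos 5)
Your proof is correct and follows essentially the same route as the paper's: a case analysis on how $ij$ attaches to $G'$, with the only substantive input being that a single added edge raises the $\Z^2$-rank by at most one (\lemref{z2-rank-01}). Your use of the identity $\dim\HH_1(G',\Z)=m'-n'+c'$ to collapse the bookkeeping of $(n''-n')-(c''-c')$ into two cases is a mild streamlining of the paper's four explicit cases (disjoint edge or loop, pendant edge, bridge, edge within a component), but the content and conclusions in each sub-case are identical.
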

\begin{figure}[htbp]
\centering
\subfigure[]{\includegraphics[width=.4\textwidth]{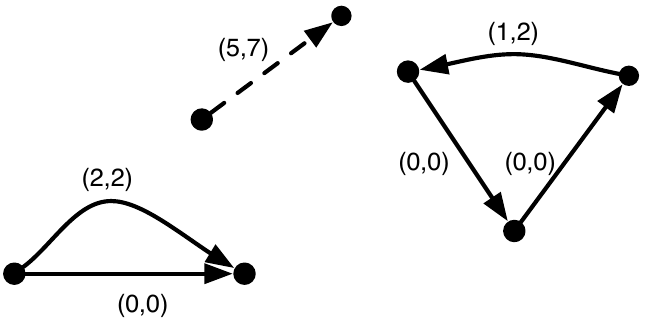}\label{fig:f-increase-1}}
\subfigure[]{\includegraphics[width=.4\textwidth]{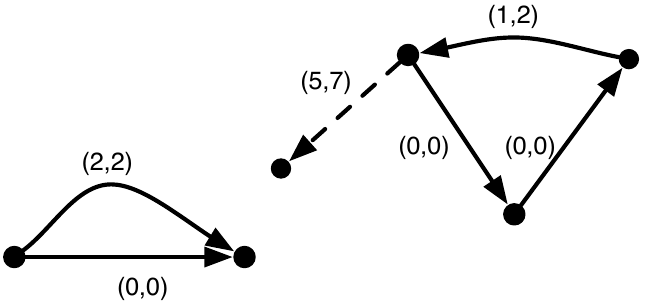}\label{fig:f-increase-2}}
\caption{Cases in the proof of \lemref{f-increase}: (a) Case 1; (b) Case 2.  The edge $ij$
to be added is indicated by a dashed line.}
\end{figure}
\begin{proof}
The proof is a case analysis based on how the new edge $ij$ interacts with the connected components of $G'$.  (We
remind the reader at this point that $ij$ may be a self-loop; i.e., $i$ may equal $j$.)

Let $n'$, $c'$ and $k'$ be the number of vertices, number of connected components and $\Z^2$-rank of $(G',\bgamma)$.
Similarly let $n''$, $c''$, $k''$  be the same quantities for $(G'+ij,\bgamma)$.

\noindent \textbf{Case 1:} \emph{$ij$ is disjoint from all connected components of $G$} (\figref{f-increase-1}).

If $ij$ is disjoint from $G'$ and $ij$ is an edge,
then $n''=n'+2$, $c''=c'+1$, and, since adding $ij$ cannot create any new cycles in $G'$,
$k'=k''$.  Thus we have $f(G'+ij,\bgamma)=n''+k''-c''=n'+2+k'-(c'+1)=f(G',\bgamma)+1$.

If $ij$ is a self-loop, then $n''=n'+1$ and $c''=c'+1$, so the only possible change can
come from $k''-k'$.  This is either zero or one, depending on whether the color
$\gamma_{ij}$ is in the span of $\rho(G',\bgamma)$.

\noindent \textbf{Case 2:} \emph{$i$ is in some connected component of $G'$ and $j$ is not} (\figref{f-increase-2}).

In this case, $j$  becomes a leaf ($ij$ cannot be a self loop in this case).
We have $n''=n'+1$, $c'=c''$, and, since no cycle is created, $k''=k'$.  It follows that
$f(G'+ij,\bgamma)=f(G',\bgamma)+1$.
\begin{figure}[htbp]
\centering
\subfigure[]{\includegraphics[width=.4\textwidth]{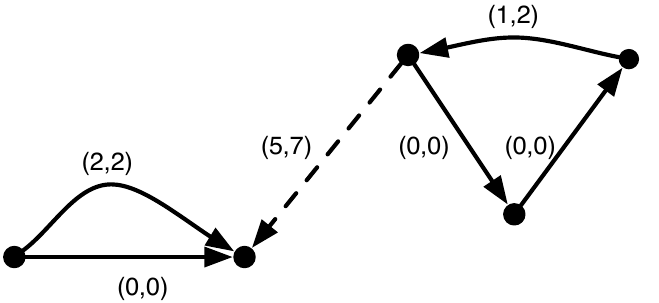}\label{fig:f-increase-3}}
\subfigure[]{\includegraphics[width=.4\textwidth]{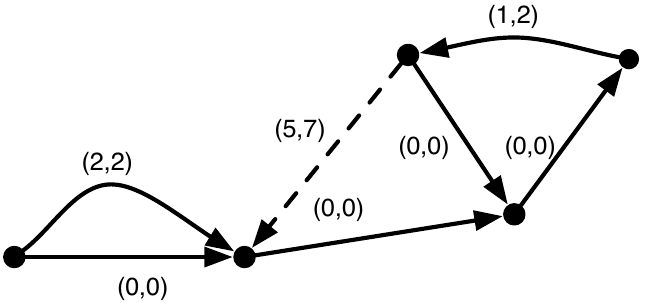}\label{fig:f-increase-4}}
\caption{Cases in the proof of \lemref{f-increase}: (a) Case 3; (b) Case 4.  The edge $ij$
to be added is indicated by a dashed line.}
\end{figure}

\noindent \textbf{Case 3:} \emph{$i$ and $j$ are in different connected components of $G'$} (\figref{f-increase-3}).

In other words, $ij$ is a bridge in $G+ij$ (and so $ij$ cannot be a self-loop).
No new cycles are created, so $k'=k''$.  Since $c''=c'-1$ and $n'=n''$
$f(G'+ij,\bgamma)=f(G',\bgamma)+1$.

\noindent \textbf{Case 4:} \emph{$ij$ is in the span of some connected component of $G$} (\figref{f-increase-4}).

The number of vertices and connected components is fixed, and the proof follows from \lemref{z2-rank-01}.  (The
treatment of self-loops is uniform in this case.)
\end{proof}

\begin{lemma}\lemlab{submodular}
Let $\bgamma$ be a coloring of  $K_{n}^{6,4}$.  The function $f$ from subgraphs $(G',\bgamma)$ of
$(K_{n}^{6,4},\bgamma)$ is submodular.
\end{lemma}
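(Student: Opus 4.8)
The plan is to establish the stated single-edge inequality
\[
f(G''+ij,\bgamma) - f(G'',\bgamma) \ \ge\ f(G'+ij,\bgamma) - f(G',\bgamma)
\]
for every subgraph $(G'',\bgamma)$ of $(G',\bgamma)$ and every colored edge $ij$ of the ground set with $ij\notin E(G')$; by the reduction noted just above, this yields submodularity of $f$. By \lemref{f-increase} both sides lie in $\{0,1\}$, so the inequality can only fail when the right side equals $1$ and the left side equals $0$. Hence I would assume $f(G'+ij,\bgamma)-f(G',\bgamma)=1$ and prove that adding $ij$ to the smaller graph also increases $f$ by $1$.

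Two preliminary observations drive the argument. First, the case computation in \lemref{f-increase} shows that for any colored graph $(H,\bgamma)$, adding an edge $ij$ that does \emph{not} create a cycle in $H$ (i.e.\ $ij$ is an ordinary edge whose endpoints lie in distinct components of $H$, or at least one endpoint lies outside $V(H)$) always increases $f$ by exactly $1$, whereas adding an $ij$ that \emph{does} create a cycle (the endpoints of $ij$ lie in one component of $H$, or $ij$ is a self-loop) changes $f$ by $\operatorname{rk}_{\Z^2}(H+ij,\bgamma)-\operatorname{rk}_{\Z^2}(H,\bgamma)\in\{0,1\}$. Second, since $(G'',\bgamma)$ is an edge-induced subgraph of $(G',\bgamma)$, every cycle of $G''$ is a cycle of $G'$, so the image of $\rho$ on $G''$ lies in the subgroup generated by $\rho(G',\bgamma)$ and, in particular, their $\R$-spans are nested.

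The proof is then a short case analysis on how $ij$ sits in $G''$. If adding $ij$ to $G'$ creates no cycle, then the endpoints of $ij$ lie in distinct components of $G'$ or outside $V(G')$; since components of $G''$ are contained in components of $G'$, the same is true in $G''$, so adding $ij$ to $G''$ also creates no cycle and increases $f$ by $1$. Now suppose adding $ij$ to $G'$ creates a cycle, so by our assumption $\operatorname{rk}_{\Z^2}(G'+ij,\bgamma)=\operatorname{rk}_{\Z^2}(G',\bgamma)+1$. If in $G''$ the edge $ij$ still fails to create a cycle (one endpoint missing, both missing, or endpoints in distinct components of $G''$), then again $f$ goes up by $1$. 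Otherwise $ij$ creates a cycle in $G''$ — either $ij$ is a self-loop at a vertex of $G''$, or its two endpoints lie in one component of $G''$ — and in both situations $ij$ is spanned by a connected component of $G''$, so \lemref{z2-rank} (applied with $G'$ as the ambient graph and $G''$ as its subgraph) gives $\operatorname{rk}_{\Z^2}(G''+ij,\bgamma)=\operatorname{rk}_{\Z^2}(G'',\bgamma)+1$, whence $f$ increases by $1$ in $G''$ as well. In every case $f(G''+ij,\bgamma)-f(G'',\bgamma)=1$, which completes the inequality.

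The whole argument is essentially bookkeeping; the one point that needs the earlier machinery rather than a direct computation is the transfer of the ``$\Z^2$-rank increased'' condition from $G'$ down to $G''$, which is precisely the content of \lemref{z2-rank} (and, viewing a self-loop as a one-vertex cycle, implicitly \lemref{z2-rank-01} for the self-loop sub-case). I do not expect any serious obstacle beyond carefully enumerating the handful of ways $ij$ can meet $G''$ and checking that the ``acyclic'' cases always contribute a full $+1$, so they can never be the deficient side of the inequality.
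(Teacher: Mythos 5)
Your proof is correct and follows essentially the same route as the paper's: reduce to the single-edge inequality, use \lemref{f-increase} to restrict both sides to $\{0,1\}$, split on whether adding $ij$ to $G'$ creates a cycle (the paper's Cases 1--3 versus Case 4), and invoke \lemref{z2-rank} to push the rank increase down to $G''$ when $ij$ is spanned by a component of $G''$. The only wrinkle worth noting --- a self-loop at a vertex of $G'$ that is absent from $V(G'')$ is disjoint from $G''$ yet still needs the containment $\operatorname{span}\rho(G'',\bgamma)\subseteq\operatorname{span}\rho(G',\bgamma)$ to conclude the rank of $G''+ij$ increases --- is handled with the same level of detail in the paper itself, so this is not a gap relative to the original argument.
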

\begin{proof}
We check the submodular inequality.  To do this, we let $(G',\bgamma)$ be a colored subgraph of the ground
set, $(G'',\bgamma)$ be a subgraph of $(G',\bgamma)$, and $ij$ a colored edge not in $(G',\bgamma)$.  We
need to show that
\[
f(G''+ij,{\bm{\gamma}}) - f(G'',{\bm{\gamma}}) \ge f(G'+ij,{\bm{\gamma}}) - f(G',{\bm{\gamma}})
\]
By \lemref{f-increase}, both sides of the inequality to prove are either zero or one.  It follows that
when the r.h.s. is zero, we are done.  What's left is to assume that the r.h.s. is one, and show that
this implies that the l.h.s. is as well.

Since $f(G'+ij,{\bm{\gamma}}) - f(G',{\bm{\gamma}})=1$, we know from the proof of \lemref{f-increase}
that this was due either to:
\begin{itemize}
\item One of Cases \textbf{1}, \textbf{2}, or \textbf{3}.
\item Case \textbf{4}, where the $\Z^2$-rank increased.  (This is the only possibility if $ij$ is a
self-loop.)
\end{itemize}
Because $G''$ is a subgraph of $G'$, if the increase $f(G'+ij,{\bm{\gamma}}) - f(G',{\bm{\gamma}})=1$ is due to
Cases \textbf{1}, \textbf{2}, or \textbf{3}, then $ij$ is not in the span of any connected component
of $G''$ either.  This means that adding $ij$ to $G''$ will force one
of these cases as well, and the desired inequality follows.

To complete the proof, we suppose that the increase
$f(G'+ij,{\bm{\gamma}}) - f(G',{\bm{\gamma}})=1$ came from Case \textbf{4}.  Again, if $ij$ was not
in the span of any connected component of $G''$, this forces one of the first three cases analyzed
in the proof of \lemref{f-increase}, and we are done.  Otherwise, all the hypotheses of
\lemref{z2-rank} are met, and the lemma follows.
\end{proof}
We remark that we have shown more about $f$ than was strictly necessary to apply \propref{edmonds}.  As
readers familiar with matroid theory will have noticed, \lemref{f-increase} and \lemref{submodular}
together imply that $f$ is actually the rank function of a matroid.

\subsection{Proof of \lemref{fmat}}
The non-negativity and monotonicity of the function $f$ follow from \lemref{f-increase} and submodularity
was checked in \lemref{submodular}.  Thus \propref{edmonds} applies to $f$, and we have the first statement.

For the second statement, \lemref{fmax-11k} implies that the bases of the resulting matroid $\mathcal{M}_f$
are $(1,1,k)$-graphs.
\hfill $\qed$

\section{The $(2,2,k)$-matroids}\seclab{22k}
We have the tools is place to prove our main combinatorial results on $(2,2,k)$-graphs.

\subsection{Proof of Lemmas \ref{lemma:222matroid} and \ref{lemma:222decomp}}
Fix a coloring ${\bm{\gamma}}$.  On the one hand, since \lemref{submodular} implies that $f$  meets the
conditions of \propref{edmonds}, $2f$ does too, so $\mathcal{M}_{2f}$ has as its bases $(2,2,k)$-colored graphs if
the $\Z^2$-rank of $(K_n^{6,4},{\bm{\gamma}})$ is $k$, since the right hand side of the defining
sparsity condition is just $2f(G',{\bm{\gamma}})$. This proves \lemref{222matroid}.

On the other hand, \propref{edmonds2} says that the bases of $\mathcal{M}_{2f}$ are exactly the graphs
that decompose into two disjoint bases of $\mathcal{M}_f$.  By \lemref{fmat}, these are the
decompositions required by \lemref{222decomp}.
\hfill $\qed$

\section{The colored-Laman matroid}\seclab{cloning}
Although it is a corollary of
\theoref{periodiclaman}, we can give a proof of the following lemma directly.
\begin{lemma}\lemlab{232matroid}
Let $(K_n^{6,4},{\bm{\gamma}})$ have $\mathbb{Z}^2$-rank 2.
Then the family of colored-Laman graphs forms the bases of a matroid.
\end{lemma}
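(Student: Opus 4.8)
The plan is to obtain the colored-Laman matroid from the $(2,2,2)$-matroid by a single application of Dilworth truncation, i.e. by passing from the submodular function $g(G',\bgamma) = 2n' - 2 + 2\operatorname{rk}_{\Z^2}(G',\bgamma) - 2(c'-1)$ to the function $g(\cdot) - 1$ and invoking \propref{edmonds}. Concretely, define $h(G',\bgamma) = g(G',\bgamma) - 1 = 2n' - 3 + 2\operatorname{rk}_{\Z^2}(G',\bgamma) - 2(c'-1)$ on the non-empty subgraphs of the ground set $(K_n^{6,4},\bgamma)$ (and set $h(\emptyset) = 0$, or equivalently restrict attention to subgraphs with at least one edge, since the colored-Laman-sparsity inequality is vacuous on the empty graph). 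The colored-Laman-sparse graphs are then exactly the subgraphs $(G',\bgamma)$ satisfying $m'' \le h(G'',\bgamma)$ for every sub-subgraph $G'' \subseteq G'$, and a colored-Laman graph is a maximal such subgraph, having $m = 2n+1 = h(G,\bgamma)$ edges when $\operatorname{rk}_{\Z^2}(K_n^{6,4},\bgamma) = 2$ and $G$ is connected. So the statement reduces to checking that $h$ is non-negative, monotone, integer-valued, and submodular, and that the maximal $h$-sparse subgraphs all have the same size, namely $2n+1$; then \propref{edmonds} produces the matroid $\mathcal{M}_h$ directly.

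The steps, in order: first, record that $g = 2f$ is submodular and monotone, which is immediate from \lemref{submodular} (this is already used in \secref{22k}); hence $h = g - 1$ is submodular as well, since subtracting a constant preserves submodularity. Second, verify monotonicity and non-negativity of $h$ on the relevant domain: by \lemref{f-increase} adding an edge changes $f$ by $0$ or $1$, so it changes $g$ by $0$ or $2$, hence $h$ is monotone; for non-negativity one checks $h \ge 1$ on any subgraph with at least one edge, which follows because such a subgraph has $n' \ge 1$ (and $n' \ge 2$ unless the edge is a self-loop, in which case $\operatorname{rk}_{\Z^2} \ge 1$), so in all cases $2n' - 3 + 2\operatorname{rk}_{\Z^2} - 2(c'-1) \ge 1$ — this is exactly the same kind of small case check as in \lemref{fmax-connected}. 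Third, apply \propref{edmonds} to $h$ to get the matroid $\mathcal{M}_h$ whose bases are the edge-maximal $h$-sparse subgraphs, i.e. those with $h(G',\bgamma) = h(K_n^{6,4},\bgamma)$; when the ground set has $\Z^2$-rank $2$ and $n$ vertices, $h(K_n^{6,4},\bgamma) = 2n - 3 + 4 - 0 = 2n+1$ (using \lemref{fmax-connected}-style reasoning to see the maximizer is connected with full $\Z^2$-rank), so the bases are exactly the colored-Laman graphs. The final bookkeeping point is to confirm that the two notions of ``colored-Laman'' agree: the matroid-theoretic bases of $\mathcal{M}_h$ are the subgraphs that are $h$-sparse and have $h(G',\bgamma) = 2n+1$, and one wants these to coincide with (colored-Laman-sparse) + ($m = 2n+1$ edges). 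This is where one uses that a colored-Laman-sparse graph with $2n+1$ edges necessarily achieves $h(G,\bgamma) = 2n+1$ and is connected with $\Z^2$-rank $2$ — analogous to the classical fact (cited in the excerpt via \cite[Lemma 4]{LS08}) that minimal violations of Laman-sparsity are connected.

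The main obstacle I expect is not the submodularity (which is essentially free from \lemref{submodular}) but the low-level case analysis needed to pin down non-negativity of $h$ and, more delicately, to match the two definitions of colored-Laman graph — in particular to rule out $h$-sparse subgraphs on $2n+1$ edges that are disconnected or have $\Z^2$-rank less than $2$, since the $-2(c'-1)$ and $+2\operatorname{rk}_{\Z^2}$ terms interact. One clean way to handle this is to argue, as in the uncolored Laman case, that a subgraph violating colored-Laman-sparsity has a minimal violator that is connected (so that the $c'$-correction is inert on it), reducing everything to the connected case where $h(G',\bgamma) = 2n' - 3 + 2\operatorname{rk}_{\Z^2}(G',\bgamma)$ behaves transparently. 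Once that reduction is in place, comparing bases of $\mathcal{M}_h$ with colored-Laman graphs is routine, and \propref{edmonds} finishes the proof.
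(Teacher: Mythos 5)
Your proposal matches the paper's proof: the paper defines exactly the same function $h = 2f - 1$, notes that monotonicity and submodularity are inherited from $2f$ (via \lemref{submodular}), checks non-negativity, and applies \propref{edmonds}, with the Dilworth-truncation interpretation noted immediately afterwards. Your additional bookkeeping on identifying the bases of $\mathcal{M}_h$ with colored-Laman graphs is detail the paper leaves implicit, but the route is the same.
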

\begin{proof}
This is an application of \propref{edmonds}	to the function
\[
h(G',{\bm{\gamma}}) = 2n'- 3 + 2k' - 2(c'-1)
\]
where $n'$, $c'$, and $k'$ have their usual meanings.  Since the function $h$ is
$2f(G',\bgamma)-1$, and $f$ is submodular and monotone,  $h$ is as well. Non-negativity
of $h$ is not hard to check.
\end{proof}
As discussed in the introduction, \lemref{232matroid} amounts to saying that the colored-Laman
matroid is the (combinatorial) Dilworth Truncation of the $(2,2,2)$-matroid.

\subsection{Circuits in the colored-Laman matroid}
If a colored graph $(G,\bgamma)$ is not colored-Laman-sparse, then it must have some subgraph $(G',\bgamma)$
with $m'$ edges and $m' > h(G',\bgamma)\ge 2f(G,\bgamma)$, where $g$ is
defined above and $f$ is the function defined in \secref{tdngraphs}.

We define a colored graph $(G,\bgamma)$ to be a \emph{colored-Laman circuit} if:
\begin{itemize}
\item $G$ has  $m=2f(G,\bgamma)$ edges.
\item Removing \emph{any} edge $ij$ from $G$ results in a colored-Laman-sparse
graph $(G-ij,\bgamma)$.
\end{itemize}
We note that $(G-ij,\bgamma)$ may not be a colored-Laman graph.  For example \figref{dependent-framework}
shows a colored-Laman circuit, but it has no spanning subgraph that is colored-Laman.

The colored-Laman circuits are the minimal obstructions to colored-Laman-sparsity.
\begin{lemma}\lemlab{circuits}
Let $(G,{\bm{\gamma}})$ be a colored graph, and suppose that $(G,{\bm{\gamma}})$ is not
colored-Laman-sparse.  Then $(G,{\bm{\gamma}})$ has
as an edge-induced subgraph $(G',{\bm{\gamma}})$ a colored-Laman circuit.
\end{lemma}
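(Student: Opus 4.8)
The plan is to show that if $(G,\bgamma)$ is not colored-Laman-sparse, then we can find a minimal "bad" subgraph, and then verify that such a minimal bad subgraph is exactly a colored-Laman circuit in the sense just defined. The argument is a standard matroid-circuit extraction, but with the colored sparsity function in place of an ordinary $(k,\ell)$-count, so I will phrase it carefully in terms of the function $h(G',\bgamma) = 2f(G',\bgamma) - 1 = 2n' - 3 + 2\operatorname{rk}_{\Z^2}(G',\bgamma) - 2(c'-1)$.

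First I would note that since $(G,\bgamma)$ is not colored-Laman-sparse, the set $\mathcal{S}$ of edge-induced subgraphs $(G',\bgamma)$ with $m' > h(G',\bgamma)$ is non-empty; equivalently $m' \ge 2f(G',\bgamma)$, using that $h = 2f - 1$ and $m'$ is an integer. Choose $(G',\bgamma) \in \mathcal{S}$ with the fewest edges, and among those with the fewest vertices. I claim $m' = 2f(G',\bgamma)$ exactly. Suppose $m' > 2f(G',\bgamma)$, i.e.\ $m' \ge 2f(G',\bgamma) + 1$. Take any edge $ij \in E(G')$ and consider $(G'-ij,\bgamma)$. By \lemref{f-increase}, $f$ changes by at most one when an edge is added, so $f(G'-ij,\bgamma) \ge f(G',\bgamma) - 1$, hence $2f(G'-ij,\bgamma) \ge 2f(G',\bgamma) - 2 = 2f(G',\bgamma) + 1 - 3 \ge m' - 3$; but $m' - 1 \ge 2f(G',\bgamma) \ge 2f(G'-ij,\bgamma)$, so $(G'-ij,\bgamma) \in \mathcal{S}$, contradicting minimality of the edge count. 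Wait — I need to double-check the arithmetic so the bound is tight enough; if $f(G'-ij,\bgamma) = f(G',\bgamma)$ the inequality $m'-1 \ge 2f(G'-ij,\bgamma)$ is immediate, and if $f$ drops by one, removing the edge drops both sides, and the condition $m'-1 \ge 2f(G',\bgamma)-2$ still holds since we assumed $m' \ge 2f(G',\bgamma)+1$. So in either case $G'-ij$ is still bad, contradiction; therefore $m' = 2f(G',\bgamma)$.

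Next I would check the second defining property: removing any edge $ij$ from $G'$ yields a colored-Laman-sparse graph. By minimality of the number of edges, $(G'-ij,\bgamma)$ is colored-Laman-sparse: it has strictly fewer edges, so it cannot be in $\mathcal{S}$, which means every edge-induced subgraph of it satisfies $m'' \le h$. One subtlety is that "colored-Laman-sparse" is a hereditary property — a graph is colored-Laman-sparse iff every edge-induced subgraph satisfies the count — so "not in $\mathcal{S}$" for $G'-ij$ and all its subgraphs is exactly what we need, and since subgraphs of $G'-ij$ are subgraphs of $G'$ with fewer edges, none of them lie in $\mathcal{S}$ either. Hence $(G'-ij,\bgamma)$ is colored-Laman-sparse for every $ij \in E(G')$. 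Together with $m' = 2f(G',\bgamma)$ this is precisely the definition of a colored-Laman circuit, and $(G',\bgamma)$ is an edge-induced subgraph of $(G,\bgamma)$ by construction, completing the proof.

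**The main obstacle** is making sure the integer-parity step ($m' > h$ iff $m' \ge 2f$, since $h = 2f-1$) and the "drop by at most one" estimate from \lemref{f-increase} combine cleanly, so that deleting any single edge of a minimum-edge counterexample genuinely keeps it a counterexample — this is where the factor of $2$ in the sparsity function and the $-1$ Dilworth shift interact, and it is the one place the argument is not purely formal. The hereditary characterization of colored-Laman-sparsity (used implicitly when passing to $G'-ij$) should be flagged but is routine. Everything else is the usual minimal-counterexample-is-a-circuit bookkeeping.
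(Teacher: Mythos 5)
Your proof is correct, but it takes a different route from the paper's. The paper argues via the matroidal property of colored-Laman graphs (\lemref{232matroid}): it extracts a \emph{maximal} colored-Laman-sparse subgraph $(B,\bgamma)$, adds a single edge $ij\notin B$, observes that $(B+ij,\bgamma)$ is still $(2,2,2)$-sparse because every subgraph gained at most one edge, and then identifies the circuit as the unique minimal subgraph of $B+ij$ attaining $m'=2f$ --- essentially the fundamental-circuit construction for a matroid basis. You instead run a direct minimal-counterexample argument: take a violating subgraph with fewest edges, use the integrality shift $m'>h \Leftrightarrow m'\ge 2f$ together with the fact that $f$ changes by at most one under edge deletion (\lemref{f-increase}, i.e.\ monotonicity plus unit increments) to pin down $m'=2f$ exactly, and get the second defining property of a circuit for free from edge-minimality and the hereditary nature of the sparsity condition. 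Your version is more elementary and self-contained --- it does not invoke \lemref{232matroid} at all, only \lemref{f-increase} --- while the paper's version additionally yields the uniqueness of the minimal circuit through a given edge outside a maximal sparse subgraph, which is not needed for this statement. One cosmetic remark: your intermediate inequality $f(G'-ij,\bgamma)\ge f(G',\bgamma)-1$ points in the unneeded direction (what is needed is the upper bound $f(G'-ij,\bgamma)\le f(G',\bgamma)$ from monotonicity), but your subsequent case check lands on the correct conclusion in both cases, so the argument stands.
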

\begin{proof}
We begin by extracting a maximal subgraph $(B,{\bm{\gamma}})$ of $(G,{\bm{\gamma}})$ that is
colored-Laman sparse.  The matroidal property (\lemref{232matroid}) implies that
these all have the same size, and since  $(G,{\bm{\gamma}})$ is not colored-Laman-sparse,
$(B,{\bm{\gamma}})$ is not all of $(G,{\bm{\gamma}})$.  Let $ij$ be an edge not in $B$, and
consider the subgraph $(B+ij,{\bm{\gamma}})$.

Since each subgraph of $(B,{\bm{\gamma}})$ gained at most one more edge, $(B+ij,{\bm{\gamma}})$ is
colored-$(2,2,2)$ sparse and since $(B+ij,{\bm{\gamma}})$
is not colored-Laman-sparse, some subgraph $(C,{\bm{\gamma}})$ of it on $n'$ vertices,
$m'$ edges, $c'$ connected components, and $\Z^2$-rank $k'$ must have
\[
m' = 2n'  + 2k'-2c' = 2f(G')
\]
A final appeal to the matroidal property of colored-Laman graphs shows that there is a unique minimal
such $(C,{\bm{\gamma}})$, which will be a colored-Laman circuit as desired.
\end{proof}

\subsection{Characterization of colored-Laman graphs by edge doubling}
The following characterization of colored-Laman graphs is very similar in spirit to the
Lovász-Yemini \cite{LY82} and Recski \cite{R84} characterizations of Laman graphs, and the proof
is similar as well.
\begin{lemma}\lemlab{periodiclamancontract}
Let $(G,{\bm{\gamma}})$ be a colored graph with $n$ vertices and $2n+1$ edges.  Then $(G,{\bm{\gamma}})$
is colored-Laman if and only if doubling any edge $ij$ results in a $(2,2,2)$-colored-graph
$(G+(ij)_c,{\bm{\gamma}})$.
\end{lemma}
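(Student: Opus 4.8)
The plan is to prove both directions of the equivalence, using the matroidal and decomposition machinery already established for $(2,2,2)$-graphs together with the counting identity $h = 2f - 1$ relating the colored-Laman and $(2,2,2)$-sparsity functions.

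\textbf{The ``only if'' direction.} Suppose $(G,\bgamma)$ is colored-Laman, so it has $2n+1$ edges and satisfies $m' \le h(G',\bgamma) = 2f(G',\bgamma) - 1$ for every edge-induced subgraph. Fix an edge $ij$ and form $(G+(ij)_c,\bgamma)$, which has $2n+2$ edges. I must check the $(2,2,2)$-sparsity condition $m' \le 2f(G',\bgamma)$ on every subgraph $(G',\bgamma)$ of $G+(ij)_c$. If $G'$ does not contain both copies of the doubled edge, then $G'$ is (isomorphic to) a subgraph of the original $G$, so $m' \le 2f(G',\bgamma) - 1 < 2f(G',\bgamma)$. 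If $G'$ contains both $ij$ and $(ij)_c$, then deleting $(ij)_c$ gives a subgraph $G'-(ij)_c$ of $G$ on the same vertex set, with the same $\Z^2$-rank and connectivity (the parallel copy changes none of $n'$, $c'$, or $\operatorname{rk}_{\Z^2}$), so $m' - 1 \le 2f(G'-(ij)_c,\bgamma) - 1 = 2f(G',\bgamma) - 1$, hence $m' \le 2f(G',\bgamma)$. Since $G+(ij)_c$ has exactly $2n+2$ edges and is $(2,2,2)$-sparse, it is a $(2,2,2)$-graph.

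\textbf{The ``if'' direction.} This is the harder direction, and it is where I expect the main obstacle. Assume $(G,\bgamma)$ has $n$ vertices and $2n+1$ edges but is \emph{not} colored-Laman; I must produce an edge $ij$ whose doubling breaks $(2,2,2)$-sparsity. By \lemref{circuits}, $(G,\bgamma)$ contains a colored-Laman circuit $(C,\bgamma)$ — an edge-induced subgraph with $m_C = 2f(C,\bgamma)$ that is edge-minimal with this property. Pick any edge $ij$ of $C$ and double it: I claim the subgraph $(C+(ij)_c,\bgamma)$ of $(G+(ij)_c,\bgamma)$ violates $(2,2,2)$-sparsity. Indeed $C+(ij)_c$ has $m_C + 1 = 2f(C,\bgamma) + 1 = 2f(C+(ij)_c,\bgamma) + 1 > 2f(C+(ij)_c,\bgamma)$, since doubling an edge leaves $f$ unchanged. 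Hence $(G+(ij)_c,\bgamma)$ is not $(2,2,2)$-sparse, so in particular not a $(2,2,2)$-graph, contradicting the hypothesis for this choice of $ij$. (Conversely, if $(G,\bgamma)$ is colored-Laman-sparse but has $2n+1$ edges — i.e.\ it \emph{is} colored-Laman — the first direction already handles it, and if it has the wrong edge count the statement's hypothesis $m = 2n+1$ rules this out.)

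\textbf{Remaining care.} The one subtlety to nail down is the interaction between edge-minimality of the circuit and the ``for all $ij$'' versus ``exists $ij$'' quantifiers: the lemma says doubling \emph{any} edge yields a $(2,2,2)$-graph, so to refute it I only need \emph{one} bad edge, and any edge of the circuit $C$ works by the computation above — so no delicate choice is needed. I should also double-check that when $G+(ij)_c$ passes $(2,2,2)$-sparsity on all subgraphs and has $2n+2$ edges, the global count $2f(G+(ij)_c,\bgamma) = 2f(G,\bgamma) = 2n-2+2\cdot 2 = 2n+2$ is attained (this uses $\operatorname{rk}_{\Z^2} = 2$ and $G$ connected, which follows since a colored-Laman graph on $n$ vertices with $2n+1$ edges must be connected with full $\Z^2$-rank — a quick consequence of the sparsity count applied to the whole graph). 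With these checks in place, both implications are complete.
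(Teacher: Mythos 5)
Your proof is correct and follows essentially the same route as the paper's: the forward direction is the same case analysis on whether a subgraph of $G+(ij)_c$ contains one or both parallel copies (using that the second copy changes none of $n'$, $c'$, $\operatorname{rk}_{\Z^2}$), and the converse doubles an edge inside a subgraph violating colored-Laman-sparsity and observes the count fails. Your detour through \lemref{circuits} in the converse is harmless but unnecessary — any violating subgraph works, as the paper uses directly.
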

\begin{proof}
First suppose that $(G,{\bm{\gamma}})$ is colored-Laman.  Any subgraph of $G+(ij)_c$ that is a subgraph of
$G$ already satisfies the sparsity counts.  Suppose then that a subgraph $G'$ of $G+(ij)_c$ contains $(ij)_c$.
If $G'$ also contains the edge $ij$, then $G'$ is $G' - (ij)_c$ plus one edge (which adds no new vertices or
components to $G' - (ij)_c$).  Since $G' - (ij)_c$ is a subgraph of $G$, if $G' - (ij)_c$ has $m''$ edges,
$n''$ vertices, $c''$ components and $\Z^2$-rank $k''$, then
$$m'' \leq 2n'' - 3 + 2k'' - 2(c''-1).$$  Observe that $G'$ has $m' = m''+1$ edges, $n' = n''$ vertices, $c' = c''$
components and $\Z^2$-rank $k' \geq k''$ and hence $m' \leq 2n' - 2 + 2k' - 2(c'-1)$.  If $G'$ does not contain the
edge $ij$, then $G' - (ij)_c + ij$, a subgraph of $G$, has the same rank and number of vertices, edges,
and components as $G'$.

On the other hand, if $(G, {\bm{\gamma}})$ is not colored-Laman, then there is some subgraph
$(G',{\bm{\gamma}})$ with $n'$ vertices, $\Z^2$-rank $k'$, and at least $2n' - 2 + 2k' - 2(c'-1)$
edges.  Then doubling any edge in $G'$ results in a graph that is not $(2,2,2)$-colored.
\end{proof}

\section{Natural representations}\seclab{natural}
In Sections \ref{section:22k} and \ref{section:cloning},
we proved that the $(2,2,k)$-colored graphs and colored-Laman graphs on $n$ vertices
each give the bases of a matroid.
In matroidal terms, the rigidity \theoref{periodiclaman} states that
the \emph{rigidity matrix} (defined in \secref{infinitesimal})
for generic periodic bar-joint frameworks \emph{represents} the
colored-Laman matroid: linear independence among the rows of the matrix corresponds bijectively
to independence in the associated combinatorial matroid.

The next step in the program set out in the introduction is to give
linear representations of the $(2,2,k)$-colored matroids which are \emph{natural} in the sense
that the matrices obtained have the same dimensions as the corresponding rigidity matrices and
non-zero entries at the same positions.

We now give the detailed definitions and state the main representation
results on $(2,2,k)$-matroids.

\subsection{The generic rank of a matrix}
A \emph{generic matrix} has as its non-zero entries \emph{generic variables}, or formal
polynomials over $\mathbb{R}$ in generic variables.  Its \emph{generic rank}
is given by the largest number $r$ for which $\vec M$ has an $r\times r$ matrix minor with a
determinant that is formally non-zero.

Let $\vec M$ be a generic matrix in $m$ generic variables $x_1,\ldots, x_m$, and
let $\vec v=(v_i)\in \mathbb{R}^m$.  We define a \emph{realization $\vec M(\vec v)$ of $\vec M$}
to be the matrix obtained by replacing the variable $x_i$ with the corresponding number $v_i$.  A
vector $\vec v$ is defined to be a \emph{generic point} if the rank of $\vec M(\vec v)$ is equal to the
generic rank of $\vec M$; otherwise $\vec v$ is defined to be a \emph{non-generic} point.

We will make extensive use of the following well-known facts from algebraic geometry (see, e.g., \cite{CLO97}):
\begin{itemize}
\item The rank of a generic matrix $\vec M$ in $m$ variables
is equal to the maximum over $\vec v\in \mathbb{R}^m$ of the rank of all
realizations $\vec M(\vec v)$.
\item The set of non-generic points of a generic matrix $\vec M$ is an
algebraic subset of $\mathbb{R}^m$.
\item The rank of a generic matrix $\vec M$ in $m$ variables is at least
as large as the rank of any specific realization $\vec M(\vec v)$; i.e., generic rank
can be established by a single example.
\end{itemize}

\subsection{Generic representations of matroids}
Let $\mathcal{M}$ be a matroid on ground set $E$.  We define a generic matrix $\vec M$ to be
a \emph{generic representation of $\mathcal{M}$} if:
\begin{itemize}
\item There is a bijection between the rows of $\vec M$ and the ground set $E$.
\item A subset of rows of $\vec M$  attains the rank of the matrix $\vec M$
if and only if the corresponding subset of $E$ is a basis of $\mathcal{M}$.
\end{itemize}

\subsection{The natural representation of the $(2,2,k)$-matroids}
Let $(G, \bm{\gamma})$ be a colored graph with $n$ vertices and $m$ edges.  We define
the matrix  $\vec M_{2,2,2}(G, {\bm{\gamma}})$ to be the $m$ by $2n+4$ matrix with the
filling pattern indicated below:
$$  \bordermatrix{                   &              &     i    &             &     j &            &           L_1                               & L_2  \cr
& \dots & \dots               & \dots & \dots               & \dots & \dots                                      & \dots \cr
ij   &  0\dots0 & -a_{ij} \;\; -b_{ij} & 0\dots0 & a_{ij} \;\; b_{ij} & 0\dots 0& \gamma_{ij}^1 a_{ij} \;\; \gamma_{ij}^1 b_{ij} & \gamma_{ij}^2 a_{ij} \;\; \gamma_{ij}^2 b_{ij}\cr
& \dots & \dots                & \dots & \dots               & \dots & \dots                                     & \dots  }.
$$
The rows of $\vec M_{2,2,2}(G,{\bm{\gamma}})$	are indexed by the edges $ij\in E(G)$.  The columns are indexed as follows:
the first $2n$ columns are indexed by the vertices $V(G)$, with two columns for every vertex; the last $4$
are associated with the coordinate projections of ${\bm{\gamma}}$ onto $\mathbb{Z}$, with each getting two columns $L_1$
and two columns $L_2$.  The entries $a_{ij}$ and $b_{ij}$ are generic variables, with different copies of an edge
getting distinct variables.  Notice that the sign pattern of the matrix encodes the underlying orientation
of $G$.

We now state our representation result for the $(2,2,k)$-matroid.
\begin{lemma} \lemlab{M2rank}
Let $(G, \bm{\gamma})$ be a colored graph with $2n - 2 + 2k$ edges and $\Z^2$-rank $k$.  Then $(G, \bm{\gamma})$ is $(2, 2, k)$-colored
if and only if $\vec M_{2, 2, 2}(G, \bm{\gamma})$ has generic rank $2n-2+ 2k$.
\end{lemma}
From this we have the immediate corollary:
\begin{cor}\corlab{M2rep}
If $(K_n^{6, 4}, \bm{\gamma})$ has $\Z^2$-rank $2$, then the matrix $\vec M_{2,2,2}( K_n^{6,4},{\bm{\gamma}})$ represents the $(2, 2, 2)$-colored matroid.
\end{cor}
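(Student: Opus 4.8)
The plan is to read off \cororef{M2rep} from \lemref{M2rank}, the matroidal structure of \lemref{222matroid}, and the definition of a generic representation. Write $k=\operatorname{rk}_{\mathbb{Z}^2}(K_n^{6,4},\bgamma)$, put $r=2n-2+2k$, let $\vec M=\vec M_{2,2,2}(K_n^{6,4},\bgamma)$, and let $\mathcal{M}$ be the $(2,2,k)$-matroid on $(K_n^{6,4},\bgamma)$, whose bases are exactly the $(2,2,k)$-colored subgraphs --- each with $r$ edges --- by \lemref{222matroid}. For a set $S\subseteq E(K_n^{6,4})$ write $(G_S,\bgamma)$ for the edge-induced subgraph on $S$, so that $\vec M_{2,2,2}(G_S,\bgamma)$ is, up to identically-zero columns, the submatrix of $\vec M$ on the rows $S$. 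The first condition for a generic representation is free: the rows of $\vec M$ are indexed by, hence in bijection with, $E(K_n^{6,4})$. So the content is the second condition, that a row set $S$ attains the generic rank of $\vec M$ if and only if $S$ is a basis of $\mathcal{M}$.

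First I would show that the generic rank of $\vec M$ is $r$. For the lower bound, $\mathcal{M}$ is nonempty --- two edge-disjoint spanning $(1,1,k)$-graphs assemble into a $(2,2,k)$-colored graph by \lemref{222decomp}, and they fit inside $K_n^{6,4}$ since it has six copies of every edge and four self-loops at every vertex --- so $\mathcal{M}$ has a basis $B$; the subgraph $(G_B,\bgamma)$ is $(2,2,k)$-colored with $r$ edges, so \lemref{M2rank} gives that $\vec M_{2,2,2}(G_B,\bgamma)$ has generic rank $r$, and since it is the submatrix of $\vec M$ on the rows $B$, the generic rank of $\vec M$ is at least $r$. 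For the upper bound I would use the routine estimate that the generic rank of $\vec M_{2,2,2}(G',\bgamma)$ is at most $2n'-2+2k'-2(c'-1)$ for every subgraph: after permuting columns a row of $\vec M_{2,2,2}(G',\bgamma)$ has the form $(a_{ij}\vec w_{ij}\mid b_{ij}\vec w_{ij})$, where $\vec w_{ij}$ is the signed incidence vector of $ij$ with $(\gamma^1_{ij},\gamma^2_{ij})$ appended in the two lattice slots, so the column space lies in the sum of two copies of the column space of $\vec W=(\vec w_{ij})_{ij}$, and $\operatorname{rank}\vec W=n'-c'+k'$ because the left kernel of $\vec W$ is exactly the space of cycles with trivial $\rho$-image (using that $\rho$ is well defined on $\HH_1$, \lemref{z2-basis}). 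Taking $G'=K_n^{6,4}$ gives generic rank at most $2(n-1+k)=r$. This estimate is precisely the ``only if'' direction of \lemref{M2rank} specialized to the whole graph, so if it has already been recorded on the way to \lemref{M2rank} it is simply cited.

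With the generic rank of $\vec M$ equal to $r$, the equivalence is immediate. Any $S$ with $|S|\neq r$ neither attains the rank $r$ nor is a basis of $\mathcal{M}$, since all bases of $\mathcal{M}$ have $r$ elements; so take $|S|=r$. Then $S$ attains the generic rank of $\vec M$ exactly when its $r$ rows are generically independent, i.e.\ when $\vec M_{2,2,2}(G_S,\bgamma)$ has generic rank $r$; since $(G_S,\bgamma)$ has $r=2n-2+2k$ edges, \lemref{M2rank} says this holds if and only if $(G_S,\bgamma)$ is $(2,2,k)$-colored, which by \lemref{222matroid} is exactly the statement that $S$ is a basis of $\mathcal{M}$. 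That proves the corollary. (Combining \lemref{M2rank} applied to a basis-completion of an independent $S$ with the estimate above applied to a sparsity-violating subgraph of $G_S$ upgrades this to: an arbitrary $S$ is generically independent iff $(G_S,\bgamma)$ is $(2,2,k)$-sparse, i.e.\ iff $S$ is independent in $\mathcal{M}$.) The only step with content beyond bookkeeping is pinning the generic rank of $\vec M$ at $r$, and even that --- granting \lemref{M2rank} --- reduces to the one-line rank count for $\vec W$ together with one appeal to \lemref{M2rank} for the matching lower bound.
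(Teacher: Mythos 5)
Your proof is correct and takes the same route the paper intends: \cororef{M2rep} is stated there as an immediate consequence of \lemref{M2rank} together with \lemref{222matroid}, and your write-up is exactly that deduction spelled out. The one step you rightly flag as having content---pinning the generic rank of the full matrix $\vec M_{2,2,2}(K_n^{6,4},\bgamma)$ at $2n-2+2k$ from above via the rank-$(n'-c'+k')$ incidence-type matrix $\vec W$---is the piece the paper leaves implicit (it is essentially the row-dependency count of \lemref{112max} doubled), and your argument for it is sound.
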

To prove \lemref{M2rank}, we first establish analogous results for $(1,1,k)$-graphs in \secref{11k-rep}.  This
is done using determinant formulas similar to standard ones for the graphic matroid.  The deduction, in \secref{22k-rep},
of \lemref{M2rank} from the results of \secref{11k-rep} and \lemref{222decomp} is then a standard argument that is
nearly the same as that for the Matroid Union Theorem for representable matroids \cite[Proposition 7.6.14]{B86} or
the specialization to the graphic matroid \cite[Theorem 1]{W88}.

\section{Natural representations of $(1,1,k)$-graphs}\seclab{11k-rep}
Let $(G,{\bm{\gamma}})$ be a colored graph, and define the matrix $M_{1,1,2}(G,{\bm{\gamma}})$ to
have the filling pattern indicated below:
$$
\bordermatrix{                   &              &     i    &             &     j &            &           L                                \cr
& \dots & \dots               & \dots & \dots               & \dots & \dots                                       \cr
ij   &  0\dots0 & -a_{ij}  & 0\dots0 & a_{ij} & 0\dots 0& \gamma_{ij}^1 a_{ij} \;\; \gamma_{ij}^2 a_{ij}\cr
& \dots & \dots                & \dots & \dots               & \dots & \dots                                     & \dots  }.
$$
The row and column indexing is similar to that for
$\vec M_{2,2,2}(G, {\bm{\gamma}})$: there are $m$ rows, one for each edge, and $n+2$ columns,
one for each vertex and two associated with the coordinate projections of the coloring.

\begin{lemma}\lemlab{112total}
Let $(G, \bm{\gamma})$ be a colored graph with $n - 1 + k$ edges.  Then $(G, \bm{\gamma})$ is $(1, 1, k)$-colored
if and only if $M_{1, 1, 2}(G, \bm{\gamma})$ has generic rank $n - 1 + k$.
\end{lemma}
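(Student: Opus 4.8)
The plan is to read \lemref{112total} as the statement that the rows of $M_{1,1,2}(G,\bgamma)$ are linearly independent over the rational function field $\R(\{a_{ij}\})$ precisely when $(G,\bgamma)$ is $(1,1,k)$-colored. Since the generic rank of a matrix with polynomial entries equals its rank over the function field generated by its variables, and $M_{1,1,2}(G,\bgamma)$ has $m=n-1+k$ rows, this is exactly the asserted rank equality. The first step is to translate a left-dependency into combinatorics: a relation $\sum_{ij\in E(G)}\mu_{ij}\vec r_{ij}=\vec 0$, with $\vec r_{ij}$ the row indexed by $ij$ and $\mu_{ij}\in\R(\{a_{ij}\})$, says --- reading off the $n$ vertex columns --- that the vector $\nu=(\nu_{ij})$ with $\nu_{ij}:=\mu_{ij}a_{ij}$ is a circulation on $G$ (zero divergence at every vertex, a self-loop contributing nothing there), and --- reading off the two lattice columns $L_1,L_2$ --- that $\sum_{ij}\nu_{ij}\gamma^1_{ij}=\sum_{ij}\nu_{ij}\gamma^2_{ij}=0$. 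Because every $a_{ij}$ is invertible in $\R(\{a_{ij}\})$, the correspondence $\mu\mapsto\nu$ is a bijection, so the rows are dependent iff there is a nonzero circulation satisfying these two constraints.

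Next I would compute the dimension of the space of such circulations. The constraints are homogeneous linear with integer coefficients, so their solution space has the same dimension over $\R$ as over $\R(\{a_{ij}\})$, and I may work over $\R$. The circulations of $G$ form its cycle space $Z$, of dimension $m-n+c$; fixing a spanning forest $F$, the indicator vectors $\vec 1_{C_1},\dots,\vec 1_{C_{m-n+c}}$ of the fundamental cycles of the edges outside $F$ form a basis. Orienting the traversal of each $C_t$ to agree with its non-forest edge, the linear map $Z\to\R^2$ sending $\nu$ to $\big(\sum_{ij}\nu_{ij}\gamma^1_{ij},\ \sum_{ij}\nu_{ij}\gamma^2_{ij}\big)$ carries $\vec 1_{C_t}$ to $\rho(C_t)$, so its image is the real span of $\{\rho(C_t)\}$, which by \lemref{z2-basis} has dimension $\operatorname{rk}_{\mathbb{Z}^2}(G,\bgamma)$. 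Hence a nonzero circulation killing both lattice columns exists iff $\operatorname{rk}_{\mathbb{Z}^2}(G,\bgamma)<m-n+c$; equivalently, $M_{1,1,2}(G,\bgamma)$ has generic rank $m$ iff $\operatorname{rk}_{\mathbb{Z}^2}(G,\bgamma)=m-n+c$.

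To conclude I would specialize to $m=n-1+k$, so the condition becomes $\operatorname{rk}_{\mathbb{Z}^2}(G,\bgamma)=k+c-1$. If $(G,\bgamma)$ is $(1,1,k)$-colored it is connected ($c=1$) with $\operatorname{rk}_{\mathbb{Z}^2}(G,\bgamma)=k$, so the equality holds and the generic rank is $n-1+k$. Conversely, if $G$ is connected then $\operatorname{rk}_{\mathbb{Z}^2}(G,\bgamma)\le m-n+c=k$ automatically, and more generally $\operatorname{rk}_{\mathbb{Z}^2}(G,\bgamma)\le k$ in the applications of this lemma, where $k$ is the $\Z^2$-rank of the ground-set coloring; combined with $\operatorname{rk}_{\mathbb{Z}^2}(G,\bgamma)=k+c-1$ this forces $c=1$ and $\operatorname{rk}_{\mathbb{Z}^2}(G,\bgamma)=k$, and then $G$ connected with $n-1+k$ edges is a spanning tree plus $k$ extra edges whose fundamental cycles have $\Z^2$-rank $k$, i.e., a $(1,1,k)$-graph --- this last identification being exactly \lemref{fmax-11k}. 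For readers who prefer the concrete determinant route suggested for this section, the ``if'' direction can instead be obtained by exhibiting a non-vanishing $(n-1+k)\times(n-1+k)$ minor: the spanning-tree rows restricted to the vertex columns are, after pulling out the diagonal of the $a_{ij}$'s, a signed incidence matrix whose deletion of one vertex column has determinant $\pm 1$, and the remaining $k\le 2$ rows sit in the lattice columns together with the deleted vertex column, the determinant being checked case by case along the structural classification in \lemref{11k-types}.

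The step I expect to need the most care is fixing the sign and orientation conventions so that the cycle-space map really restricts to $\rho$ on the fundamental cycles: one must line up each edge's stored orientation against the traversal direction of each cycle through it, and verify that self-loops behave uniformly --- contributing nothing to the vertex columns, consistent with their being ``cycles on a single vertex,'' and contributing $\gamma_{ij}$ to the lattice columns, consistent with $\rho$ of such a loop. A secondary point, used only in the ``only if'' direction, is to keep track of the hypothesis $\operatorname{rk}_{\mathbb{Z}^2}(G,\bgamma)\le k$; without it a disconnected colored graph with $n-1+k$ edges could in principle reach generic rank $m$ while failing to be $(1,1,k)$-colored, so this is exactly what must be invoked to rule out the disconnected cases.
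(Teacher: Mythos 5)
Your argument is correct, but it takes a genuinely different route from the paper. The paper proves \lemref{112total} by splitting into the three cases $k=0,1,2$ (Lemmas \ref{lemma:110rep}--\ref{lemma:112rep}): it uses the cycle elimination \lemref{cycle} to row-reduce $\vec M_{1,1,2}$ so that cycle rows carry $\rho(C)$ in the lattice columns, and then computes explicit determinant formulas ($\pm\prod a_{ij}$, $\pm t_q\prod a_{ij}$, $\pm(t_1^1t_2^2-t_1^2t_2^1)\prod a_{ij}$) for the relevant minors. Your proof is uniform in $k$: you identify the left kernel over $\R(\{a_{ij}\})$ with circulations annihilating the two lattice columns, observe that the relevant linear map $Z\to\R^2$ restricts to $\rho$ on fundamental cycles (this is really the same computation as \lemref{cycle}, repackaged), and read off that full row rank is equivalent to $\operatorname{rk}_{\Z^2}(G,\bgamma)=m-n+c$. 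This is cleaner and avoids the case analysis; what it does \emph{not} produce is the explicit monomial determinant formulas, which the paper reuses in the proof of \lemref{rankdecomp} to argue that distinct decompositions contribute distinct monomials in the Laplace expansion, so the two sub-determinants cannot cancel. That is outside the scope of this lemma, but worth keeping in mind if you intend your version to replace Lemmas \ref{lemma:110rep}--\ref{lemma:112rep} wholesale.

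Your flagged caveat in the converse direction is real and correctly handled: as literally stated the lemma leaves $k$ unquantified, and without the standing hypothesis $\operatorname{rk}_{\Z^2}(G,\bgamma)=k$ (made explicit in each of Lemmas \ref{lemma:110rep}--\ref{lemma:112rep}) there are counterexamples --- e.g.\ two isolated vertices each carrying one self-loop, colored $(1,0)$ and $(0,1)$, has $n-1+1$ edges and generic rank $2=n-1+1$ but is not a $(1,1,1)$-graph. With $k$ read as the $\Z^2$-rank of $(G,\bgamma)$, your inequality $\operatorname{rk}_{\Z^2}(G,\bgamma)\le k$ forces $c=1$ from $\operatorname{rk}_{\Z^2}(G,\bgamma)=k+c-1$, and the rest follows.
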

\begin{proof}
Lemmas \ref{lemma:110rep}--\ref{lemma:112rep} (below) prove the lemma for each rank.
\end{proof}
The rest of the section contains the proofs.

\subsection{Natural representation of the graphic matroid}
The sub-matrix induced by the $n$ columns is standard in matroid theory: it is the usual
generic representation of the graphic matroid.  We will denote this sub-matrix $\vec M_{1,1}(G)$ (there is
no dependence on the coloring, so the notation suppresses it).  The following lemma is standard.
\begin{lemma}[][{\cite[Section 4]{L07}}]\lemlab{M11}
Let $G$ be a graph with $n$ vertices and $n-1$ edges.  Let $\vec M^\bullet_{1,1}(G)$ be a (necessarily square)
matrix minor of $\vec M_{1,1}(G)$ obtained by dropping any one column.  Then:
\begin{itemize}
\item $\det(\vec M^\bullet_{1,1}(G)) = \pm\prod_{ij\in E(G)} a_{ij}$ if $G$ is a tree.
\item $\det(\vec M^\bullet_{1,1}(G)) = 0$ otherwise.
\end{itemize}
\end{lemma}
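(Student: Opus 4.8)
The plan is to establish both cases by induction on the number of vertices $n$, after a preliminary normalization that lets us choose which column is deleted. The key structural observation is that $\vec M_{1,1}(G)$ is, up to scaling rows, an oriented vertex–edge incidence matrix, so the whole proof mirrors the classical computation of a reduced incidence determinant.

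\emph{Normalizing the deleted column.} Every row of $\vec M_{1,1}(G)$ has the form $a_{ij}(\vec e_j - \vec e_i)$, so the $n$ columns of $\vec M_{1,1}(G)$ sum to the zero vector. Therefore, if $\vec M^\bullet_{1,1}(G)$ is the square minor with the column of a vertex $u$ deleted and $w\neq u$ is another vertex, then adding all the remaining present columns to the (present) column of $w$ turns that column into the negative of the deleted column of $u$; after a permutation of columns this is exactly the minor with the column of $w$ deleted instead, up to an overall sign. Hence $\det \vec M^\bullet_{1,1}(G)$ is, up to sign, independent of which column we delete, and in the rest of the argument we are free to delete whichever column is convenient.

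\emph{The tree case.} For the base case $n=1$ there are no edges and the minor is $0\times 0$ with determinant $1$, matching the empty product. For $n\ge 2$, pick a leaf $v$ of the tree $G$, let $e=uv$ be its unique incident edge, and delete the column of $v$. Then the row indexed by $e$ has exactly one nonzero entry, $\pm a_e$, lying in the column of $u$; expanding the determinant along that row gives $\pm a_e\cdot\det\vec M^\bullet_{1,1}(G-v)$, where the remaining minor is $\vec M_{1,1}(G-v)$ with the column of $u$ deleted. Since $G-v$ is a tree on $n-1$ vertices with edge set $E(G)\setminus\{e\}$, the inductive hypothesis gives $\det\vec M^\bullet_{1,1}(G) = \pm\prod_{ij\in E(G)}a_{ij}$, finishing this case.

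\emph{The non-tree case.} A graph with $n$ vertices and $n-1$ edges that is not a tree must be disconnected and hence contains a cycle $C$ (a self-loop counts as one). Summing the rows indexed by the edges of $C$, each scaled by $\pm 1/a_e$ according to the orientation of $e$ along $C$, telescopes to the zero vector, so the rows of $\vec M_{1,1}(G)$ are linearly dependent over the field of rational functions in the variables $a_{ij}$. As $\vec M^\bullet_{1,1}(G)$ contains all $n-1$ rows, its determinant vanishes identically. This lemma is routine and I do not expect a genuine obstacle; the only thing requiring care is the sign bookkeeping in the column-deletion normalization and in the cofactor expansion, and since the statement claims the determinant only up to sign, even that is harmless.
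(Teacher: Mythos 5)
Your proof is correct; the paper does not prove this lemma at all but cites it as the standard fact about reduced incidence matrices (Lov\'asz, Problem 4.9), and your argument --- factoring out the $a_{ij}$ to see a signed incidence matrix, leaf-induction with cofactor expansion for the tree case, and a telescoping cycle relation for the row dependency otherwise --- is exactly that standard proof. The only cosmetic point is in the non-tree case: disconnectedness by itself does not force a cycle; it is the combination of $c\ge 2$ components with $n-1$ edges that rules out a forest, which is what your argument actually uses.
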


\subsection{Cycle elimination lemma}
The defining property of $(1,1,k)$-graphs is the presence of $k$ cycles with non-trivial, independent $\rho$-images.
The following lemma shows how to make the connection explicit.
\begin{lemma}\lemlab{cycle}
Let $(G,{\bm{\gamma}})$ be a colored graph on $n$ vertices with a cycle $C$ and an edge $i_0j_0$ on $C$.
Then $\vec M_{1,1,2}(G,{\bm{\gamma}})$ can be put into a form by elementary operations such that:
\begin{itemize}
\item The row $\vec r_{i_0j_0}$ corresponding to $i_0j_0$ has all zeros in the first $n$
columns and $\rho(C)$ in the last two columns.
\item All other rows have entries $\pm 1$ or $0$
in the first $n$ columns and $\gamma_{ij}$ in the last two columns.
\end{itemize}
\end{lemma}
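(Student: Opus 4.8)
The plan is to reach the claimed form in two phases, using only elementary row operations. \emph{Phase 1 (normalization).} First I would divide, for every edge $ij \in E(G)$, the row $\vec r_{ij}$ of $\vec M_{1,1,2}(G,{\bm{\gamma}})$ by the generic variable $a_{ij}$; since $a_{ij}\neq 0$ this is invertible. After this, the row of $ij$ has $-1$ in the column of the tail $i$, $+1$ in the column of the head $j$, zeros in the other vertex columns, and $(\gamma_{ij}^1,\gamma_{ij}^2)$ in the two lattice columns $L$. This already puts every row other than $\vec r_{i_0 j_0}$ into the form asserted in the second bullet, and I would not touch those rows again. What remains is to clear the vertex columns of $\vec r_{i_0 j_0}$.

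\emph{Phase 2 (cycle telescoping).} Write $C$ with its fixed traversal as the closed vertex sequence $v_0, v_1, \dots, v_{\ell-1}, v_\ell = v_0$, with $i_0 j_0$ the edge $e_0$ joining $v_0$ and $v_1$ and $e_1, \dots, e_{\ell-1}$ the other edges of $C$ in traversal order. For each $t$ set $\epsilon_t = +1$ if $e_t$ is oriented from $v_t$ to $v_{t+1}$ and $\epsilon_t = -1$ otherwise. By the definition of $\rho$ (an edge traversed from tail to head contributes $+\gamma$, one traversed from head to tail contributes $-\gamma$) one has $\rho(C) = \sum_{t=0}^{\ell-1} \epsilon_t \gamma_{e_t}$, and after Phase 1 the row of $e_t$ equals $\epsilon_t(\vec u_{v_{t+1}} - \vec u_{v_t})$ on the vertex columns — where $\vec u_v$ denotes the indicator of the column of $v$ — and $\gamma_{e_t}$ on the lattice columns. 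I would then add to $\vec r_{e_0}$ the combination $\sum_{t=1}^{\ell-1} \epsilon_0 \epsilon_t \, \vec r_{e_t}$ and rescale $\vec r_{e_0}$ by $\epsilon_0$, turning it into $R := \sum_{t=0}^{\ell-1} \epsilon_t \vec r_{e_t}$. On the vertex columns $R$ is $\sum_t \epsilon_t^2 (\vec u_{v_{t+1}} - \vec u_{v_t}) = \sum_t (\vec u_{v_{t+1}} - \vec u_{v_t}) = \vec u_{v_\ell} - \vec u_{v_0} = 0$; on the lattice columns $R$ is $\sum_t \epsilon_t \gamma_{e_t} = \rho(C)$. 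So $\vec r_{i_0 j_0}$ becomes $(0,\dots,0\,;\,\rho(C))$, while $e_1,\dots,e_{\ell-1}$ were only rescaled in Phase 1 and keep the required form; this gives the lemma.

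The hard part will be getting the sign bookkeeping right: one must check that the orientation-induced $\pm1$ pattern of $\vec M_{1,1,2}$ meshes with the traversal-induced signs in the definition of $\rho(C)$ so that the vertex columns cancel while the lattice columns sum to $\rho(C)$ and not to $0$. Isolating the sign $\epsilon_t$, which multiplies the vertex part of the $e_t$-row but not its (positively stored) lattice part, is what makes this transparent: the vertex contribution of $e_t$ enters with coefficient $\epsilon_t^2 = 1$ and the lattice contribution with coefficient $\epsilon_t$, exactly the coefficient of $\gamma_{e_t}$ in $\rho(C)$. I would also dispatch the degenerate case where $C$ is a single self-loop at $i_0 = j_0$ (a one-vertex cycle under our conventions) with a one-line remark: then $\ell = 1$, the vertex part of $\vec r_{i_0 j_0}$ vanishes already, and Phase 1 alone leaves $\gamma_{i_0 j_0} = \rho(C)$ in the lattice columns.
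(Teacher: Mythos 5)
Your proposal is correct and follows essentially the same route as the paper: scale every row by $1/a_{ij}$ so that the vertex columns carry the incidence pattern and the lattice columns carry $\gamma_{ij}$, then replace the $i_0j_0$ row by the signed sum of the cycle's rows so that the vertex part telescopes to zero while the lattice part sums to $\rho(C)$. Your explicit $\epsilon_t$ bookkeeping (vertex part scales by $\epsilon_t^2=1$, lattice part by $\epsilon_t$, matching the definition of $\rho$) and the one-line treatment of the self-loop case are exactly the "easy to check" details the paper delegates to a reference, and they are handled correctly.
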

\begin{proof}
First scale each row by $1/a_{ij}$.  	It is easy to check (and is a standard result, e.g., \cite[Problem 16-3]{CLRS})
that after scaling every row by $1/a_{ij}$:
\[
\vec r_{i_0j_0} - \sum_{\substack{ij\neq i_0j_0\in C\\ \text{traversed forwards}}} \vec r_{ij} +
\sum_{\substack{ij\neq i_0j_0\in C\\ \text{traversed backwards}}} \vec r_{ij}
\]
equals a row vector with zeros in the first $n$ columns and $\rho(C)$ in the last two columns.
\end{proof}
The next several lemmas establish natural representation of $(1,1,k)$-colored graphs.

\subsection{Natural representation for $(1,1,0)$-graphs}
\begin{lemma}\lemlab{110rep}
Let $(G,{\bm{\gamma}})$ have $\mathbb{Z}^2$ rank 0, $n$ vertices and $m=n-1$ edges.  Then:
\begin{itemize}
\item $\vec M_{1,1,2}(G,{\bm{\gamma}})$ has generic rank $n-1$ if and only if $(G,{\bm{\gamma}})$ is
$(1,1,0)$-colored (i.e., $G$ is a tree).
\item The minor $\vec M^\bullet_{1,1,2}(G,{\bm{\gamma}})$ obtained by dropping both columns in
the $L$ block and any of the first $n$ columns indexed by vertices has determinant:
\[
\det\left(\vec M^\bullet_{1,1,2}(G,{\bm{\gamma}}) \right)=\pm\prod_{ij\in E}a_{ij}
\]
if $G$ is a tree and $0$ otherwise.
\end{itemize}
\end{lemma}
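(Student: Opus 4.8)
The plan is to deduce both statements from \lemref{M11} (the determinant formula for the graphic-matroid representation) together with the cycle elimination \lemref{cycle}, using that $\Z^2$-rank $0$ forces every cycle to have trivial $\rho$-image.

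First I would observe that deleting both columns of the $L$ block from $\vec M_{1,1,2}(G,{\bm{\gamma}})$ leaves exactly the graphic-matroid matrix $\vec M_{1,1}(G)$: the $L$ block is the only place the coloring enters, and the remaining columns have precisely the filling pattern defining $\vec M_{1,1}(G)$. Consequently the minor $\vec M^\bullet_{1,1,2}(G,{\bm{\gamma}})$ in the second bullet --- obtained by additionally dropping one vertex column --- is literally a minor $\vec M^\bullet_{1,1}(G)$ of the type treated by \lemref{M11}. This immediately gives the claimed determinant: $\pm\prod_{ij\in E}a_{ij}$ if $G$ is a tree and $0$ otherwise, which is the second bullet. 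The ``if'' direction of the first bullet then follows at once: when $G$ is a tree this minor is a formally nonzero polynomial, so $\vec M_{1,1,2}(G,{\bm{\gamma}})$ has an $(n-1)\times(n-1)$ submatrix of nonzero determinant and hence generic rank $\ge n-1$; since it has only $m = n-1$ rows, its generic rank is exactly $n-1$.

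For the converse I would argue the contrapositive. Suppose $G$ is not a tree. Since $G$ has $n$ vertices and only $n-1$ edges, it must then be disconnected and contain a cycle $C$ (possibly a self-loop). Because $(G,{\bm{\gamma}})$ has $\Z^2$-rank $0$, we have $\rho(C) = (0,0)$. Applying \lemref{cycle} to $C$ and one of its edges $i_0j_0$, a sequence of elementary row operations (first scaling each row $\vec r_{ij}$ by $1/a_{ij}$, then combining rows along $C$) turns the row $\vec r_{i_0j_0}$ into a row with zeros in the first $n$ columns and $\rho(C) = (0,0)$ in the last two columns, i.e.\ the zero row. These operations are invertible over the fraction field $\R(a_{ij})$ and so preserve the generic rank, which is therefore at most $m - 1 = n - 2 < n - 1$. (The self-loop case is even easier: a self-loop colored $(0,0)$ already contributes a zero row of $\vec M_{1,1,2}(G,{\bm{\gamma}})$ directly.)

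I do not anticipate a real obstacle here, since all the content is already packaged in \lemref{M11} and \lemref{cycle}. The only points needing a line of care are: checking that deleting the $L$ block recovers $\vec M_{1,1}(G)$ exactly, so that \lemref{M11} applies verbatim; noting that an $n$-vertex, $(n-1)$-edge non-tree is necessarily disconnected and hence still contains a cycle; and recording that the row manipulations of \lemref{cycle}, being invertible over $\R(a_{ij})$, do not change the generic rank.
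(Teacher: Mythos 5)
Your proposal is correct and follows essentially the same route as the paper: drop the $L$ block to reduce the tree case to \lemref{M11}, and in the non-tree case use the $\Z^2$-rank-$0$ hypothesis with \lemref{cycle} to produce a formally zero row. Your added observations --- that a non-tree on $n$ vertices with $n-1$ edges is necessarily disconnected yet still contains a cycle, and that the row operations are invertible over $\R(a_{ij})$ and hence rank-preserving --- are correct and slightly more careful than the paper's write-up, but do not change the argument.
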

\begin{proof}
First we suppose that $G$, the underlying graph is a tree.  In this case, dropping in the $L$ block of
$\vec M_{1,1,2}(G,{\bm{\gamma}})$ and one other column leaves the matrix $\vec M^\bullet_{1,1}(G)$,
defined above.  \lemref{M11} then implies both the desired rank and determinant formula.

Now suppose that $G$ is not tree.  We check that any $(n-1)\times (n-1)$ minor of
$\vec M_{1,1,2}(G,{\bm{\gamma}})$ has a determinant that is formally zero.
Since $G$ is not a tree, it must contain a cycle $C$.  After applying
\lemref{cycle} to $(G,{\bm{\gamma}})$ we obtain a matrix with a row of all zeros,
since $\rho(C)=(0,0)$ by hypothesis.  Thus $\vec M_{1,1,2}(G,{\bm{\gamma}})$ is rank
deficient as desired.
\end{proof}

\subsection{Natural representation for $(1,1,1)$-graphs}
\begin{lemma}\lemlab{111rep}
Let $(G,{\bm{\gamma}})$ have $\mathbb{Z}^2$ rank 1, $n$ vertices and $m=n$ edges.  Then:
\begin{itemize}
\item $\vec M_{1,1,2}(G,{\bm{\gamma}})$ has generic rank $n$ if and only if $(G,{\bm{\gamma}})$
is a $(1,1,1)$-graph.
\item The minor $\vec M^\bullet_{1,1,2}(G,{\bm{\gamma}})$ obtained by dropping some column in the $L$ block and any
of the first $n$ columns indexed by vertices has determinant:
\[
\det\left(\vec M^\bullet_{1,1,2}(G,{\bm{\gamma}}) \right)=\pm t_q\prod_{ij\in E}a_{ij}
\]
if $G$ is $(1,1,1)$-colored and $C$ is its unique cycle with $\rho(C)=(t_1,t_2)$ ($q\in \{1,2\}$), and $0$ otherwise.
\end{itemize}
\end{lemma}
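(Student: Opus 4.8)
The plan is to normalize $\vec M_{1,1,2}(G,\bgamma)$ by scaling rows and applying the cycle elimination of \lemref{cycle}, and then read off a determinant using the graphic-matroid formula \lemref{M11}, exactly as in the proof of \lemref{110rep}.

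First assume $(G,\bgamma)$ is $(1,1,1)$-colored. By \lemref{11kstruct}, $G$ is a spanning tree $T$ together with one additional edge $e=i_0j_0$, and the unique cycle $C$ through $e$ has $\rho(C)=(t_1,t_2)\neq(0,0)$. Fix $q\in\{1,2\}$ and a vertex $v$, and let $\vec M^\bullet_{1,1,2}(G,\bgamma)$ be the square minor obtained by deleting the $L$-column $L_{3-q}$ and the column of $v$. I would scale each row $ij$ by $1/a_{ij}$, which divides the determinant by $\prod_{ij\in E(G)}a_{ij}$, and then apply the row operations of \lemref{cycle} to the cycle $C$ and edge $e$: this is determinant preserving, turns the row of $e$ into one that is zero on the vertex columns with $\rho(C)$ on the $L$-columns (so entry $t_q$ in the surviving column $L_q$), and leaves the remaining rows in the form ``$\pm1$ or $0$ on the vertex columns, $\gamma_{ij}$ on the $L$-columns.'' Cofactor-expanding along the row of $e$, whose only nonzero entry is now $t_q$, gives $\pm t_q$ times the determinant of the submatrix on rows $E(T)$ and the vertex columns other than $v$; this submatrix is $\vec M^\bullet_{1,1}(T)$ with each row scaled by $1/a_{ij}$, and since $T$ is a tree \lemref{M11} gives determinant $\pm1$. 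Undoing the scalings yields $\det \vec M^\bullet_{1,1,2}(G,\bgamma)=\pm t_q\prod_{ij\in E(G)}a_{ij}$. Taking $q$ with $t_q\neq 0$ exhibits a nonzero $n\times n$ minor, so $\vec M_{1,1,2}(G,\bgamma)$ has generic rank $n$.

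For the converse, suppose $(G,\bgamma)$ has $n$ vertices, $n$ edges, and $\Z^2$-rank $1$ but is not a $(1,1,1)$-graph. If $G$ were connected it would be a spanning tree plus one edge, hence (having $\Z^2$-rank $1$) a $(1,1,1)$-graph by definition; so $G$ has $c\geq 2$ components. I would bound the rank of $\vec M_{1,1,2}(G,\bgamma)$ by the standard row reduction: choose a spanning forest $F$ of $G$, so the $n-c$ rows indexed by $E(F)$ restrict to the graphic-matroid representation $\vec M_{1,1}(F)$ on the vertex columns, are independent there, and any combination of them vanishing on the vertex columns is trivial. For each of the remaining $c=n-(n-c)$ edges, \lemref{cycle} (applied to its fundamental cycle with respect to $F$, using only additions of $E(F)$-rows) replaces the scaled row by one that is zero on the vertex columns and equal to $\rho$ of that cycle on the $L$-columns. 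The $c$ modified rows span a space of dimension equal to the rank of those cycle images, which by \lemref{z2-basis} equals the $\Z^2$-rank $1$, and this span meets the span of the $E(F)$-rows only in $\vec 0$. Hence the row space of $\vec M_{1,1,2}(G,\bgamma)$ has dimension $(n-c)+1\leq n-1$, so every $n\times n$ minor---in particular the one in the second bullet---vanishes and the generic rank is not $n$. (In fact this argument shows $\operatorname{rank}\vec M_{1,1,2}(G,\bgamma)=(n-c)+\big(\text{$\Z^2$-rank of }(G,\bgamma)\big)$ in general.)

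The step needing the most care is the cofactor-expansion bookkeeping: one should take the edge removed by \lemref{cycle} to be the \emph{non-tree} edge $e$, so that the rows surviving the expansion are precisely those of the tree $T$, and one must keep track of the $1/a_{ij}$ row scalings so that the final product runs over all of $E(G)$. Granting \lemref{cycle} and \lemref{M11}, the remaining verifications are routine, and the same scheme---with the $2\times2$ determinant of the two linearly independent cycle images replacing the scalar $t_q$---will give the analogous $(1,1,2)$ statement.
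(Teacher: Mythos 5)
Your proof is correct and takes essentially the same route as the paper: the determinant formula is obtained exactly as in the paper's argument by scaling rows by $1/a_{ij}$, applying the cycle-elimination \lemref{cycle} to the unique cycle, and cofactor-expanding against the tree minor evaluated via \lemref{M11}. Your converse replaces the paper's case analysis of the possible $n\times n$ minors with a direct row-space count $(n-c)+\operatorname{rk}_{\mathbb{Z}^2}(G,\bgamma)\le n-1$, which is a mild (and arguably cleaner) repackaging of the same underlying idea that the eliminated rows, supported only on the $L$-columns, span a space of dimension equal to the $\Z^2$-rank.
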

\begin{proof}
Suppose that $(G,{\bm{\gamma}})$ is a (1,1,1)-graph with cycle $C$ spanning an edge
$i_0j_0$.  Applying \lemref{cycle} gives $\vec M_{1,1,2}(G,{\bm{\gamma}})$
a form where: $\vec r_{i_0j_0}$ has zeros in the first $n$ columns and $\rho(C)$ in the $L$ columns;
and all other entries in the first $n$ columns are zero or $\pm 1$.
Since $\rho(C)=(t_1,t_2)\neq (0,0)$ assume, w.l.o.g., that $\rho_1\neq 0$.
Consider the minor formed by dropping any of the first $n$ columns and the second column from
the block $L$.

The determinant of this minor (by expanding along the remaining column from $L$)
is $\rho_1\cdot \det(\vec A)$, where $\vec A$, the complementary cofactor, is $\vec M^\bullet_{1,1}(G-i_0j_0)$.
Because $(G,\bgamma)$ is a $(1,1,1)$-graph, \lemref{11kstruct} implies that $G-i_0j_0$ is a tree, and so
$\det(\vec A)=\pm 1$ by \lemref{M11}.

The desired determinant formula then follows from noting that the effect on the determinant by the scaling
in \lemref{cycle} is to multiply it by $\prod_{ij\in E(G)}a_{ij}$.

If $(G,{\bm{\gamma}})$ is not (1,1,1)-colored, it has more than one cycle.  Let $C_1$ and $C_2$ be two such cycles
with $\rho(C_r)=(t_1^r,t_2^r)$ ($r=1,2$).  Since there are $ij \in C_1$
with $ij \notin C_2$ and $i'j' \in C_2$ with $i'j' \notin C_1$, we can apply \lemref{cycle} two times to put
$\vec M_{1,1,2}(G,{\bm{\gamma}})$ into a form where the $ij$ row has
all zeros in the first $n$ columns and $\rho(C_1)$ in the last two and the $i'j'$ row has zeros in the
first $n$ columns and $\rho(C_2)$ in the last two.

There are two types of $n\times n$ minors to consider:
\begin{itemize}
\item Minors with both columns from $L$ have determinant $\pm(t_1^1 t_2^2-t_1^2 t_2^1)$, but this is zero, since the
$\mathbb{Z}^2$-rank 1 hypothesis implies that $\rho(C_1)$ and $\rho(C_2)$ are linearly dependent.
\item Minors with at most one column from $L$.  The determinant is zero, since either the minor has a row of all zeros
or the cofactor in the expansion on the remaining column of $L$ always does.
\end{itemize}
\end{proof}

\subsection{Natural representation for $(1,1,2)$-graphs}
\begin{lemma}\lemlab{112rep}
Let $(G,{\bm{\gamma}})$ have $\mathbb{Z}^2$ rank 2, $n$ vertices and $m=n+1$ edges.  Then:
\begin{itemize}
\item $\vec M_{1,1,2}(G,{\bm{\gamma}})$ has generic rank $n$ if and only if $(G,{\bm{\gamma}})$ is a $(1,1,2)$-graph.
\item The minor $\vec M^\bullet_{1,1,2}(G,{\bm{\gamma}})$ obtained by dropping any
of the first $n$ columns indexed by vertices has determinant:
\[
\det\left(\vec M^\bullet_{1,1,2}(G,{\bm{\gamma}}) \right)=\pm(t_1^1 t_2^2-t_1^2 t_2^1)\prod_{ij\in E}a_{ij}
\]
if $G$ is $(1,1,1)$-colored, with cycles $C_1$ and $C_2$, $\rho(C_q)=(t_1^q,t_2^q)$  ($q\in \{1,2\}$) and $0$ otherwise.
\end{itemize}
\end{lemma}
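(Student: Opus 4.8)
The plan is to mirror the proofs of \lemref{110rep} and \lemref{111rep}: for the ``if'' direction I would exhibit a single maximal minor whose determinant is the claimed nonvanishing polynomial, and for the ``only if'' direction I would fall back on a crude rank bound.

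For the ``if'' direction, suppose $(G,{\bm{\gamma}})$ is a $(1,1,2)$-graph. I would fix any spanning tree $T$ of $G$ and let $e_1 = i_1j_1$ and $e_2 = i_2j_2$ be the two non-tree edges, with fundamental cycles $C_1$ and $C_2$. These cycles form a basis of $\HH_1(G,\Z)$, which has rank $2$; since $(G,{\bm{\gamma}})$ has $\Z^2$-rank $2$, \lemref{z2-basis} forces $\rho(C_1)$ and $\rho(C_2)$ to be linearly independent, so writing $\rho(C_q) = (t_1^q,t_2^q)$ the integer $t_1^1t_2^2 - t_1^2t_2^1$ is nonzero. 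I would then scale every row of $\vec M_{1,1,2}(G,{\bm{\gamma}})$ by $1/a_{ij}$ and apply \lemref{cycle} twice, once with $(C_1,e_1)$ and once with $(C_2,e_2)$. Because $C_1$ and $C_2$ are fundamental cycles of the same spanning tree, $e_1 \notin C_2$ and $e_2 \notin C_1$, so the two row-clearing steps modify only the rows $\vec r_{e_1}$ and $\vec r_{e_2}$, each using only the (unchanged) rows indexed by $E(T)$; hence they commute and can be carried out simultaneously. In the resulting matrix $\vec r_{e_1}$ has zeros in the first $n$ columns and $\rho(C_1)$ in the $L$-block, $\vec r_{e_2}$ has zeros in the first $n$ columns and $\rho(C_2)$ in the $L$-block, and every remaining row has entries in $\{0,\pm 1\}$ in the first $n$ columns.

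I would finish the ``if'' direction by deleting one of the first $n$ columns to obtain a square $(n+1)\times(n+1)$ minor and expanding along the two $L$-columns. In any term of the Laplace expansion whose selected row pair omits $e_1$ or $e_2$, the complementary minor sits among the $E(T)$-rows and still contains an all-zero row, so it vanishes; hence only the row pair $\{e_1,e_2\}$ contributes, and that contribution is $\pm(t_1^1t_2^2 - t_1^2t_2^1)\det \vec M^\bullet_{1,1}(T)$. Since $T = G - e_1 - e_2$ is a spanning tree, \lemref{M11} gives $\det \vec M^\bullet_{1,1}(T) = \pm 1$, and undoing the scaling reintroduces the factor $\prod_{ij\in E}a_{ij}$, producing exactly the stated determinant; as this polynomial is not formally zero, the generic rank is $n+1$. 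For the ``only if'' direction, a connected graph with $n$ vertices and $n+1$ edges is a spanning tree plus two edges and so, its $\Z^2$-rank being $2$ by hypothesis, is already a $(1,1,2)$-graph; therefore a counterexample with these parameters is disconnected, with $c\ge 2$ components. Then the first $n$ columns of $\vec M_{1,1,2}(G,{\bm{\gamma}})$ are the graphic-matroid matrix $\vec M_{1,1}(G)$ of rank $n-c$, and the two $L$-columns add at most $2$, so $\operatorname{rank}\vec M_{1,1,2}(G,{\bm{\gamma}}) \le n-c+2 \le n < n+1$ and every maximal minor vanishes identically, giving the ``$0$ otherwise'' case.

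I expect the main thing to get right — more a matter of bookkeeping than a genuine obstacle — to be justifying that the two applications of \lemref{cycle} can be performed simultaneously and that the Laplace expansion collapses to a single term; the crux is that using fundamental cycles of one spanning tree keeps $\{e_1,e_2\}$ disjoint from the tree, so $G - e_1 - e_2 = T$ and \lemref{M11} applies directly. As in the $(1,1,1)$ case, the only non-combinatorial ingredient is the appeal to \lemref{z2-basis} for the independence of $\rho(C_1)$ and $\rho(C_2)$. If a more concrete description of the extra edges is desired, the structural trichotomy of \lemref{11k-types} (a subdivision of two loops, a loop on each endpoint of an edge, or a tripled edge) can be used to choose $e_1$ and $e_2$, but it is not needed for the determinant computation.
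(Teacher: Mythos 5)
Your proposal is correct and follows essentially the same route as the paper: row-reduce via the cycle elimination lemma using two cycles with independent $\rho$-images, then Laplace-expand along the two $L$-columns so that only the term pairing the two non-tree rows survives, yielding $\pm(t_1^1t_2^2-t_1^2t_2^1)\prod a_{ij}$. Your handling of the degenerate case (a non-$(1,1,2)$ graph with these parameters must be disconnected, so the graphic part has rank $n-c$ and the two $L$-columns cannot make up the deficit) is a clean substitute for the paper's argument of applying the cycle lemma three times to produce three rows supported only on the $L$-block; both are valid.
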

\begin{proof}
If $(G,{\bm{\gamma}})$ is $(1,1,2)$-colored than it has two cycles
$C_1$ and $C_2$, with linearly independent $\rho$ images.  The
structural \lemref{11kstruct} for $(1,1,2)$-graphs implies that we can
find edge $i_{1}j_1$ and $i_{2}j_2$ on cycle $C_1$ but not $C_2$ and $C_2$ but not $C_1$,
respectively.

Adopting the arguments and notation from the proof of \lemref{111rep}, we get the desired determinant formula,
since the $\Z^2$-rank $2$ hypothesis implies that  $(t_1^1 t_2^2- t_1^2 t_2^1)\neq 0$.

On the other hand, if $(G,{\bm{\gamma}})$ fails to be $(1,1,2)$-colored, then we can iteratively apply \lemref{cycle} three times to
put it $\vec M_{1,1,2}(G,{\bm{\gamma}})$ in a form where there are three rows with all zeros in the first $n$ columns,
which shows it to be rank deficient.
\end{proof}
The proof of \lemref{112total} is now complete.

\subsection{Maximum rank lemma}
We conclude with a small result about the maximum rank of $\vec M_{1,1,2}(G,\bgamma)$.
\begin{lemma}\lemlab{112max}
Let $(G, \bm{\gamma})$ have $\mathbb{Z}^2$-rank $k$ and $m > n- 1 + k$ edges.
Then $\vec M_{1,1,2}(G,\bm{\gamma})$ has a row dependency.
\end{lemma}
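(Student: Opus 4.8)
The plan is to bound the generic rank of $\vec M_{1,1,2}(G,\bm{\gamma})$ from above by $n-1+k$, where $k=\operatorname{rk}_{\Z^2}(G,\bm{\gamma})$; since the matrix has $m\ge n-1+k$ rows, it then cannot have more than $n-1+k$ linearly independent rows, which gives the asserted row dependency. The bound will come from invertible row operations over the field of rational functions in the variables $a_{ij}$ — these preserve the rank, hence the generic rank — that expose a block structure in which the rank is manifestly small. This is the exact analogue of how one reads off the rank of the graphic-matroid representation, augmented by the two ``lattice'' columns.

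First I would fix a spanning forest $F$ of $G$, say with $c$ connected components, so $|F|=n-c$ and $m-(n-c)$ edges lie outside $F$. As in the proof of \lemref{cycle}, scale every row $\vec r_{ij}$ by $1/a_{ij}$; the first $n$ columns then become the oriented incidence matrix of $G$ (self-loop rows being zero there). For each edge $e=ij\notin F$ let $C_e$ be its fundamental cycle with respect to $F$ (a self-loop is its own fundamental cycle, requiring no operation). Applying \lemref{cycle} along $C_e$ — that is, adding to $\vec r_e$ the appropriate $\pm1$-combination of the scaled forest rows on $C_e$ — replaces $\vec r_e$ by a row that is zero in the first $n$ columns and equal to $\rho(C_e)$ in the last two. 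These operations are mutually consistent: each one modifies only the single non-forest row $\vec r_e$ and uses only forest rows, which are never themselves pivots, so they may be carried out one after another. The result is a matrix $\vec M'$ with the same row space as $\vec M_{1,1,2}(G,\bm{\gamma})$ whose rows split into the $n-c$ forest rows (with unconstrained entries) and the rows $(\vec 0 \mid \rho(C_e))$ for $e\notin F$.

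The row space of $\vec M'$ is contained in the sum of the span $U$ of the $n-c$ forest rows and the span $V$ of the vectors $(\vec 0 \mid \rho(C_e))$, so $\operatorname{rank}\vec M_{1,1,2}(G,\bm{\gamma}) \le \dim U + \dim V \le (n-c)+\dim V$. To evaluate $\dim V$, recall that $\{C_e : e\notin F\}$ is a basis of $\HH_1(G,\Z)$ and that, by the argument in \lemref{z2-basis}, $\rho$ is a well-defined homomorphism $\HH_1(G,\Z)\to\Z^2$; hence the $\Q$-span of $\{\rho(C_e)\}$ equals the $\Q$-span of the image of $\rho$, whose dimension is the rank of the subgroup of $\Z^2$ it generates, namely $k$. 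Therefore $\operatorname{rank}\vec M_{1,1,2}(G,\bm{\gamma}) \le n-c+k \le n-1+k$, and since the matrix has at least $n-1+k$ rows it has a row dependency. The only point that really needs care is this last paragraph — that the fundamental cycles genuinely form a homology basis and that $\rho$ descends to it — but this is precisely \lemref{z2-basis}; everything else is routine linear algebra over $\R(a_{ij})$, and, notably, no genericity of the coloring is used.
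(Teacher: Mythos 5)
Your row reduction is exactly the paper's: scale each row by $1/a_{ij}$, use \lemref{cycle} along the fundamental cycles of a spanning forest to turn every non-forest row into $(\vec 0 \mid \rho(C_e))$, and observe that these rows lie in the $k$-dimensional $\Q$-span of the image of $\rho$ while the forest rows contribute at most $n-c$ to the rank. Your bound $\operatorname{rank} \vec M_{1,1,2}(G,\bgamma) \le n-c+k$ is correct and is in fact a cleaner packaging of the paper's count (which exhibits $k+1$ rows supported on the two $L$-columns and notes they must be dependent).

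The gap is the very last inference. From $\operatorname{rank} \le n-1+k$ and $m \ge n-1+k$ you conclude a row dependency, but that requires $\operatorname{rank} < m$, which your bound delivers only when $m > n-c+k$. Indeed the lemma is false as printed when $c=1$ and $m = n-1+k$: by \lemref{112total} any $(1,1,k)$-graph has $\vec M_{1,1,2}$ of full row rank $n-1+k$ (a single vertex with one self-loop of color $(1,0)$ already gives the nonzero $1\times 3$ row $(0,\,a,\,0)$). The hypothesis should be the strict inequality $m > n-1+k$; that is the form actually invoked in the proof of \lemref{rankdecomp}, and the paper's own proof tacitly assumes it too, since applying \lemref{cycle} $k+1$ times needs $m-n+c \ge k+1$. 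Under the corrected hypothesis your chain $\operatorname{rank} \le n-c+k \le n-1+k < m$ finishes the argument immediately, so the repair costs nothing --- but as written your final sentence is a non sequitur rather than a proof of the stated claim, and you should flag the off-by-one in the statement rather than paper over it.
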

\begin{proof}
If $m> n+2$ this follows from the shape.  The other case is where $m\le n+2$.
The edge counts imply that \lemref{cycle} can be applied $k+1$ times to leave
$k+1$ rows with all zeros in the first $n$ columns.  Using the same arguments
as above, the determinant of any $m\times m$ submatrix is zero.
\end{proof}

\section{Natural representations of $(2,2,k)$-graphs}\seclab{22k-rep}
The main step in the proof of  \lemref{M2rank} is to determine the rank of $\vec M_{2, 2, 2}(G,\bgamma)$
when $(G,\bgamma)$ decomposes into two $(1,1,2)$-graphs.
\begin{lemma}\lemlab{rankdecomp}
Let $(G, \bm{\gamma})$ be a colored graph with $2n - 2 + 2k$ edges and $\Z^2$-rank $k$.  Then $(G, \bm{\gamma})$ is
the edge-disjoint union of two $(1,1,k)$-graphs if and only if $\vec M_{2, 2, 2}(G,\bgamma)$ has generic rank $2n-2+ 2k$.
\end{lemma}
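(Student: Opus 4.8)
The plan is to recognize $\vec M_{2,2,2}(G,\bgamma)$ as a ``doubled'' copy of the matrix $\vec M_{1,1,2}(G,\bgamma)$ from \secref{11k-rep} and then run the standard argument that produces a linear representation of a matroid union (the colored-graph analogue of \cite[Proposition 7.6.14]{B86}, or of its specialization to spanning trees in \cite{W88}). The starting point is the notational observation that, after permuting columns, $\vec M_{2,2,2}(G,\bgamma)$ is the horizontal concatenation $[\,\vec N^{a}\mid\vec N^{b}\,]$: the block $\vec N^{a}$, consisting of the first of each pair of vertex-columns together with the columns $L_1^1, L_2^1$, is exactly $\vec M_{1,1,2}(G,\bgamma)$ written in the generic variables $a_{ij}$; the block $\vec N^{b}$, consisting of the second of each pair together with $L_1^2, L_2^2$, is $\vec M_{1,1,2}(G,\bgamma)$ written in the disjoint generic variables $b_{ij}$. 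Concretely, the row of $\vec M_{2,2,2}(G,\bgamma)$ indexed by $ij$ restricts on the $\vec N^{a}$-columns to the $\vec M_{1,1,2}(G,\bgamma)$ row $(0\dots0,-a_{ij},0\dots0,a_{ij},0\dots0,\gamma_{ij}^1 a_{ij},\gamma_{ij}^2 a_{ij})$, and likewise on the $\vec N^{b}$-columns with $b_{ij}$ in place of $a_{ij}$.

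For the ``if'' direction, suppose $E(G)=E_1\sqcup E_2$ with each $(G[E_r],\bgamma)$ a $(1,1,2)$-graph. Since generic rank is witnessed by a single realization, it suffices to produce one realization of $\vec M_{2,2,2}(G,\bgamma)$ of rank $2n-2+2k$. Specialize $b_{ij}:=0$ for all $ij\in E_1$ and $a_{ij}:=0$ for all $ij\in E_2$, keeping the remaining variables generic. Then the rows indexed by $E_1$ are supported entirely on the $\vec N^{a}$-columns and those indexed by $E_2$ on the $\vec N^{b}$-columns, so after reordering rows the matrix is block diagonal with diagonal blocks $\vec M_{1,1,2}(G[E_1],\bgamma)$ and $\vec M_{1,1,2}(G[E_2],\bgamma)$; each has full row rank by \lemref{112total}, and the two row counts add up to $2n-2+2k$.

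For the ``only if'' direction, suppose $\vec M_{2,2,2}(G,\bgamma)$ has generic rank $m:=2n-2+2k$, so all $m$ rows are generically independent, and fix a set $C$ of $m$ columns on which the corresponding square minor is formally non-zero. Writing $C=C_a\sqcup C_b$ for its split into $\vec N^{a}$- and $\vec N^{b}$-columns, expand the determinant by the generalized Laplace expansion that groups columns by block:
\[
\det \vec M_{2,2,2}(G,\bgamma)[E(G),C]\;=\;\sum \pm\,\det\vec N^{a}[E_1,C_a]\cdot\det\vec N^{b}[E_2,C_b],
\]
the sum running over partitions $E(G)=E_1\sqcup E_2$ with $|E_1|=|C_a|$. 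Since the left side is a non-zero polynomial, some summand is non-zero, yielding a partition for which $\vec N^{a}[E_1,C_a]$ and $\vec N^{b}[E_2,C_b]$ are square and generically non-singular; hence each $\vec M_{1,1,2}(G[E_r],\bgamma)$ attains full row rank on its chosen columns. The determinant formulas of Lemmas \ref{lemma:110rep}--\ref{lemma:112rep}, together with the upper bound of \lemref{112max}, then force each $(G[E_r],\bgamma)$ to have exactly the $(1,1,\cdot)$-graph structure, and comparing edge counts shows $E_1,E_2$ partition all of $E(G)$ and pins their $\Z^2$-ranks down so that this is precisely the decomposition appearing in \lemref{222decomp}. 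Feeding that characterization into \lemref{222decomp} (equivalently \propref{edmonds2}) then yields \lemref{M2rank} immediately.

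The step I expect to be the main obstacle is the last part of the ``only if'' direction: turning ``$\vec N^{a}$ has a non-singular square minor on the column set $C_a$'' into the precise statement that $(G[E_1],\bgamma)$ is the right kind of $(1,1,\cdot)$-graph, while correctly handling pieces that need not span all $n$ vertices, the case where $G$ itself is disconnected, and the arithmetic tying the $\Z^2$-ranks of the two pieces to the global count $2n-2+2k$. This is exactly where the argument departs from the purely graphic case of \cite{W88}, and where the determinant computations of \secref{11k-rep} are doing the real work.
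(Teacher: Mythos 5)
Your proposal is correct and follows essentially the same route as the paper: identify $\vec M_{2,2,2}(G,\bgamma)$ as two interleaved copies of $\vec M_{1,1,2}(G,\bgamma)$ in disjoint variables, apply the generalized Laplace expansion grouped by the $a$- and $b$-column blocks, and observe that each sub-determinant is a minor of $\vec M_{1,1,2}$ of an edge-induced subgraph, so that \lemref{112total} and \lemref{112max} control which terms survive. The one genuine difference is in the ``if'' direction: the paper argues that a decomposition into two $(1,1,k)$-graphs yields a nonzero determinant because combinatorially distinct decompositions contribute combinatorially distinct monomials in the $a_{ij}$ and $b_{ij}$ (so no cancellation can occur), whereas you specialize $b_{ij}=0$ on $E_1$ and $a_{ij}=0$ on $E_2$ to obtain a block-diagonal realization and invoke the fact that generic rank is bounded below by the rank of any specialization. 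Your specialization argument is a clean substitute that sidesteps the cancellation analysis entirely; the edge-count arithmetic you sketch for the ``only if'' direction ($m_1+m_2=2n-2+2k$ together with $m_r\le n_r-1+k_r\le n-1+k$ forcing equality throughout, hence spanning $(1,1,k)$-pieces) correctly fills in the step you flag as the main obstacle.
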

The proof is quite similar to that from \cite[Proposition 7.16.4]{B86} for Matroid Union for
linearly-representable matroids.
\begin{proof}
Let $(G,{\bm{\gamma}})$ be a $\mathbb{Z}^2$-rank $k$ colored graph with $m=2n-2+2k$ edges.
Let $\vec M^\bullet$ be any $m\times m$ submatrix of $\vec M_{2,2,2}(G,{\bm{\gamma}})$, and
let $A$ be the set of columns of $\vec M^\bullet$
with $a_{ij}$ and $B$ be the set of columns with $b_{ij}$.  We compute the determinant using the Laplace expansion:
\[
\det \left(\vec M^\bullet\right) = \sum_{\substack{X\subset [m] \\ \card{X}=\card{A}}} \pm\det\left(\vec M^\bullet[X,A] \right)\cdot\det\left(\vec M^\bullet[[m]-X,B] \right)
\]
The key observation is that each of the sub-determinants  in the sum has the form of a minor of $\vec M_{1,1,2}(G',{\bm{\gamma}})$ for
an edge-induced subgraph of $(G,{\bm{\gamma}})$, and the sub-determinants correspond to disjoint subgraphs.
First note that, \lemref{112total} and \lemref{112max} imply that unless $\vec M^\bullet$ was obtained by dropping: one column from
the first $n$ in each of $A$ and $B$ and $2-k$ columns from $L_1$ and $L_2$ in each $A$ and $B$; at least one of
the sub-determinants is zero in every term.

Now consider an $\vec M^\bullet$ of the form described.
By \lemref{112total}, unless both $X$ and $[m]-X$ correspond to $(1,1,k)$-colored subgraphs
of $(G,{\bm{\gamma}})$, every term in the determinant expansion has a zero factor,
so the whole determinant is zero.  On the other hand, if there is such a decomposition, then the whole
determinant cannot cancel, since combinatorially different decompositions give rise to
combinatorially different monomials in the $a_{ij}$ and $b_{ij}$.
\end{proof}

\subsection{Proof of \lemref{M2rank}}
The lemma is immediate from \lemref{rankdecomp} and the $(2,2,k)$-graph
decomposition \lemref{222decomp}.
\hfill $\qed$

\section{Periodic rigidity on the line}\seclab{line}
As a warm up result, we will give a combinatorial characterization of periodic rigidity on the Euclidean
line $\R$.  The definitions of frameworks and their associated colored quotient graphs are
specializations of those for the planar case:
\begin{itemize}
\item An abstract \emph{1d-periodic framework} $(\tilde{G},\varphi,\tilde{\bm{\ell}})$ is given by an infinite graph
with a free $\Z$-action that has finite quotient,
and an assignment $\tilde{\bm{\ell}}=(\tilde{\ell_{ij}})_{ij\in E(\tilde{G})}$ of \emph{edge-lengths}
that respects the action $\varphi$.
\item A \emph{realization} $\tilde{G}(\vec p,L)$ of the abstract framework is a
mapping of $V(\tilde{G})$ onto a periodic point set $\vec p=(x_i)_{i\in V(\tilde{G})}$
such that the edge lengths are respected.
\end{itemize}
The relationship between a 1d-periodic framework and the associated quotient graph (which will
have colors in $\Z$) is also similar to the planar case.  \figref{1d-framework} shows an example.

Rigidity and flexibility are also defined (on realizations) in a similar way: a framework is rigid if the
only allowed continuous motions are translations of the line and otherwise flexible.

In this section, we will show:
\begin{theorem}\theolab{1d-periodiclaman}
Let $(\tilde G, \varphi,\tilde{\bm{\ell}})$ be a generic  1d-periodic framework.  Then a generic
realization $\tilde{G}(\vec p,L)$ of $(\tilde G, \varphi,\tilde{\bm{\ell}})$ is
minimally rigid if and only if its quotient graph $(G,{\bm{\gamma}})$ is a $(1,1,1)$-graph.
\end{theorem}
The analogous result for finite frameworks on the line is that a framework is minimally
rigid if and only if the graph formed by the bars is a tree
(see, e.g., \cite[Section 2.5]{G01}).
\begin{figure}[htbp]
\centering
\subfigure[]{\includegraphics[width=.25\textwidth]{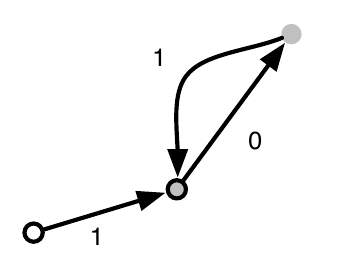}}
\subfigure[]{\includegraphics[width=.8\textwidth]{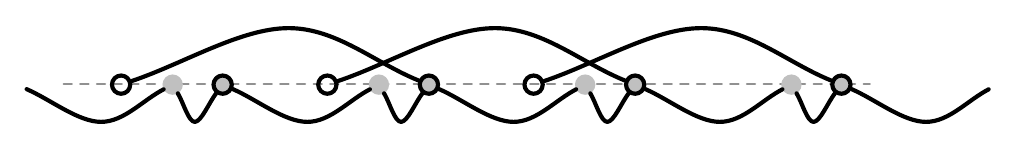}}
\caption{A minimally rigid 1d-periodic framework: (a) the underlying colored graph; (b) the 1d-periodic framework.
Bars are shown curved to avoid putting them on top of each other.}
\label{fig:1d-framework}
\end{figure}
We will give two arguments.  The first is geometric and does not generalize to the plane.  The
second uses (as we need to in the plane) \emph{infinitesimal rigidity} and relies on the natural
representations of $(1,1,k)$-graphs from \secref{11k-rep}.  As is the case for finite frameworks,
periodic direction networks are trivial objects in dimension one, so we do not develop them.

Because this is a ``warmup'' to indicate intuition, we will elide some details in the interest of
brevity.  Readers who are familiar with rigidity theory may wish to skip to \secref{directions}.

\subsection{Geometric proof}
Let $(\tilde{G},\varphi,\tilde{\bm{\ell}})$ be an abstract 1d-periodic framework.  In principle, to specify
a realization, we have to specify infinitely many points $x_i$: one for each vertex of $\tilde{G}$.  However,
the assumption that the $\Z$-action $\varphi$ has finite quotient means that there is really only
finite information present.  In particular, once we know:
\begin{itemize}
\item The location of one point in each $\Z$-orbit of vertices.
\item The real number $L$ representing $\Z$ by translations.
\end{itemize}
we can reconstruct the entire realization.

It is not hard to see that the continuity of the distance function implies that any connected
component of $\tilde{G}$ is rigid (\cite[Section 2.5]{G01} contains the details).  Thus
any connected component containing two vertices in the same $\Z$-orbit fixes $L$.  Such a
connected component is necessarily infinite, since $\tilde{G}$ is periodic.   \lemref{development-connectivity} then
implies that
the colored quotient must contain a cycle with $\Z$-rank one.  (\figref{1d-flexible-example} shows what
happens when this fails.)
\begin{figure}[htbp]
\centering
\subfigure[]{\includegraphics[width=.25\textwidth]{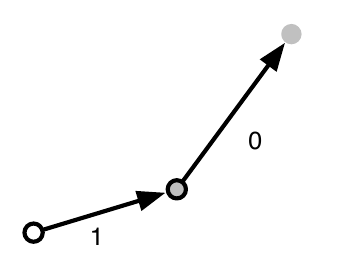}}
\subfigure[]{\includegraphics[width=.8\textwidth]{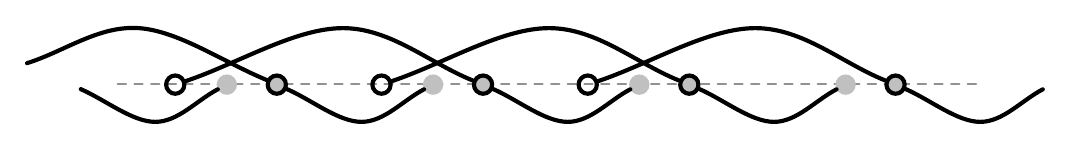}}
\subfigure[]{\includegraphics[width=.9\textwidth]{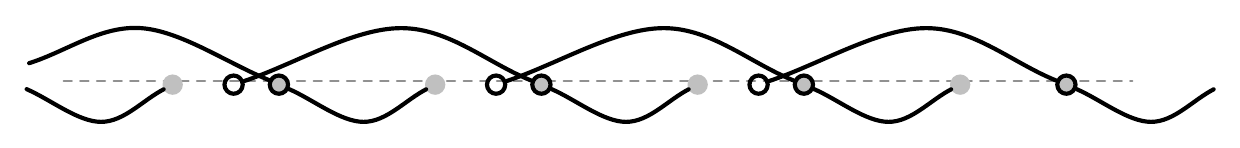}}
\caption{A flexible 1d-periodic framework: (a) the underlying colored graph; (b) the 1d-periodic frameowork;
(c) a non-trivial motion arising from changing the lattice representation $L$.}
\label{fig:1d-flexible-example}
\end{figure}
We also observe that if no infinite connected component of $\tilde{G}$ hits a vertex in each $\Z$-orbit, then
there are two orbits that can move independently of each other, leading to a flexible framework.  Thus any
rigid periodic framework on the line must contain an infinite connected component that hits every $\Z$-orbit of
vertices.  Sufficiency of the same condition is clear, so we have shown that a periodic framework on the
line is minimally rigid if and only if it's colored quotient is a $(1,1,1)$-graph.
\hfill $\qed$

\subsection{Proof via natural representations}\seclab{1d-full}
We now give a second proof of \theoref{1d-periodiclaman} that follows the general
approach we use to prove \theoref{periodiclaman}.

\vspace{0.75 ex}
\noindent \textbf{The continuous theory:} Rigidity and flexibility are determined by
the solution space to the infinite set of length equations:
\begin{itemize}
\item $|x_j - x_i| = \tilde{\ell_{ij}}$, for all edges $ij\in \tilde{G}$
\item $x_{\gamma\cdot i} = x_i + \gamma\cdot L$, for all $i\in V(\tilde{G})$ and $\gamma\in \Z$
\end{itemize}
where the unknowns are the points $x_i$ and the lattice representation $L$.

However, as noted above, since there is only finite information, we can identify this space with
the more tractable:
\begin{eqnarray*}
(x_j + \gamma_{ij}\cdot L - x_i)^2 = \ell^2_{ij} &
\text{for all colored edges $ij$ of the quotient graph $(G,\bgamma)$}
\end{eqnarray*}
We define the set of solutions to these equations to be the \emph{realization space} $\mathcal{R}(G,\bgamma)$
of the \emph{colored framework} $(G,\bgamma,\ell)$.  We note that since $\tilde{\bm{\ell}}$ had to assign the
same length to each $\Z$-orbit of edges, the colored framework is well-defined.  The \emph{configuration space}
$\mathcal{C}(G,\bgamma)$ of the colored framework is then defined to be the quotient
$\mathcal{R}(G,\bgamma)/\operatorname{Euc}(1)$ of the realization space by isometries of the line.

This formalism allows us to define rigidity: a realization of a 1d-periodic framework is \emph{rigid} when
it is isolated in the configuration space.

\vspace{0.75 ex}
\noindent \textbf{The infinitesimal theory:} The rigidity question, then, turns out to be one about the dimension
of the configuration space near a realization.  In the most general setting, this is a difficult question, but
at a smooth point, an adaptation of the arguments of Asimow and Roth \cite{AR78} show that a realization is
rigid if and only if the tangent space of the realization space is one-dimensional.

Taking the formal differential of the equations defining the realization space and dividing by two,
we obtain the system
\[
\bordermatrix{         &              &     i    &             &     j  &            &           L      \cr
& \dots        & \dots    &       \dots & \dots  & \dots      & \dots            \cr
ij &  0\dots      & -\eta_{ij}  & 0\dots0 & \eta_{ij} & 0\dots 0& \gamma_{ij} \eta_{ij} \cr
& \dots & \dots                & \dots & \dots               & \dots & \dots       }.
\]
where $\eta_{ij} = x_j +\gamma_{ij} L - x_i$, which we define to be the \emph{1d-rigidity matrix}.
The kernel of this 1d-rigidity matrix is identified with the tangent space
$T_{\vec p, L}(\mathcal{R}(G,\bgamma))$ of the realization space at the point $(\vec p, L)$.

\vspace{0.75 ex}
\noindent \textbf{Genericity and the combinatorial theory:}
Provided that the $\eta_{ij}$ are all non-zero, this matrix is just $\vec M_{1,1,1}(G,\bgamma)$ with the last column
discarded.  The condition for any of the $\eta_{ij}$ being zero is a measure-zero algebraic subset of $\mathbb{R}^{n+1}$,
which we define to be the \emph{non-generic} set of realizations.  If the $x_i$ and $L$ avoid the non-generic set,
then \lemref{111rep} implies that the 1d-rigidity matrix has corank one
if and only if $(G,\bgamma)$ contains a spanning $(1,1,1)$-graph, completing the proof.
\hfill $\qed$

\section{Periodic and colored direction networks}\seclab{directions}
We recall the following definitions from the introduction.  A \emph{periodic direction network}
$(\tilde G, \varphi,\tilde{\vec d})$ is an infinite multigraph $\tilde G$ with a
free $\mathbb{Z}^2$-action $\varphi$ by authormorphisms and an assignment of directions
$\tilde{\vec d} = (\tilde{\vec d}_{ij})_{ij\in E(\tilde G)}$ to the edges of $\tilde G$.

A \emph{realization} $\tilde G(\vec p,\vec L)$
of a periodic direction network is a mapping $\vec p$ of the vertex set $V(\tilde G)$ into
$\mathbb{R}^2$ and a matrix $\vec L\in \R^{2\times 2}$ representing
$\mathbb{Z}^2$ by translations of $\mathbb{R}^2$ such that:
\begin{itemize}
\item The representation $\Z^2 \to \R^2$ from $\vec L$ is equivariant with respect to the actions on $\tilde{G}$ and the plane;
i.e., $\vec p_{\gamma\cdot i} = \vec p_i + \vec L\cdot \gamma$ for all $i\in V(\tilde{G})$ and
$\gamma\in \Z^2$.
\item The specified edge directions are preserved by $\vec p$; i.e.,
$\vec p_j-\vec p_i=\alpha_{ij}\tilde{\vec d}_{ij}$ for all edges $ij\in E(\tilde G)$ and some
$\alpha_{ij}\in \mathbb{R}$
\end{itemize} An edge $ij$ is
\emph{collapsed} in a realization $\tilde G(\vec p)$ if $\vec p_i = \vec p_j$; a realization in
which all edges are collapsed is defined to be
a \emph{collapsed realization}, and a realization in which no edges are collapsed is \emph{faithful}.
Our main result on periodic direction networks is
\periodicparallel
In the next two sections we develop the tools we need, and then give the proof in
\secref{parallel}.

\subsection{Colored direction networks}
To study realizations of periodic direction networks we first reduce the problem to a
finite linear system.  A \emph{colored direction network} $(G,{\bm{\gamma}},\vec d)$
is defined to be a colored graph along with an assignment of directions to the edges.
A \emph{realization} $G(\vec p,\vec L)$ of the colored direction network
$(G,{\bm{\gamma}},\vec d)$ is a mapping $\vec p=(\vec p_i)_{i\in V(G)}$ into $\mathbb{R}^2$
such that:
\[
\vec p_j+\vec L\gamma_{ij}-\vec p_i=\alpha_{ij}\vec d_{ij}
\]
for some real number $\alpha_{ij}$.

An edge $ij\in E(G)$ is \emph{collapsed} in a realization
$G(\vec p,\vec L)$ if $\vec p_i=\vec p_j+\vec L\gamma_{ij}$; a realization with no
collapsed edges is defined to be \emph{faithful}.  A realization is {\em collapsed}
if all edges are collapsed.

The problems of periodic direction network realization and colored direction network
realization are equivalent.
\begin{lemma}\lemlab{dnrealizations}
Let $(\tilde G,\varphi,\vec d)$ be a periodic direction network.
Then the realizations of  $(\tilde G,\varphi,\vec d)$
are in bijective correspondence with the realizations of the colored direction
network $(G,{\bm{\gamma}},\vec d)$ on the quotient graph $(G,{\bm{\gamma}})$.  Furthermore, a
realization of $(\tilde G,\varphi,\vec d)$  is collapsed
if and only if the corresponding realization of $(G,{\bm{\gamma}},\vec d)$ is.
\end{lemma}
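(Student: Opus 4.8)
The plan is to follow the template of \lemref{dictionary}: a realization of a periodic direction network is pinned down by finite data, namely the positions of one chosen representative from each $\Z^2$-orbit of vertices together with the matrix $\vec L$, and this finite data is exactly a realization of the induced colored direction network on the quotient graph. Fix, once and for all, a representative $\tilde i \in V(\tilde G)$ for each $i \in V(G)$, which also fixes the coloring $\bgamma$ as in \secref{graphs}; here I take $\tilde{\vec d}$ to be constant on $\Z^2$-orbits of edges (this is in fact forced: equivariance of $\vec p$ makes the displacement along an edge equal the displacement along each of its translates, so the direction constraints can only be met when the directions agree on orbits), so that $\tilde{\vec d}$ descends to a direction assignment $\vec d$ on $E(G)$ via $\vec d_{ij} := \tilde{\vec d}_{\tilde{ij}}$, where $\tilde{ij}$ is the unique lift of $ij$ with tail $\tilde i$.

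First I would define the two maps. Given a realization $\tilde G(\vec p, \vec L)$, restrict to representatives: set $\vec q_i := \vec p_{\tilde i}$ and keep $\vec L$. The lift $\tilde{ij}$ has tail $\tilde i$ and head $\gamma_{ij}\cdot\tilde j$, so its direction constraint together with $\vec p_{\gamma_{ij}\cdot\tilde j} = \vec p_{\tilde j} + \vec L\gamma_{ij}$ yields $\vec q_j + \vec L\gamma_{ij} - \vec q_i = \alpha_{\tilde{ij}}\,\vec d_{ij}$, so $G(\vec q, \vec L)$ is a realization of $(G,\bgamma,\vec d)$. Conversely, given $G(\vec q, \vec L)$, reconstruct $\vec p$: because $\varphi$ is free and every orbit has a unique chosen representative, each $\tilde v \in V(\tilde G)$ is uniquely $\gamma\cdot\tilde i$, and I set $\vec p_{\tilde v} := \vec q_i + \vec L\gamma$ (keeping $\vec L$). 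Equivariance is immediate from $\gamma'\cdot(\gamma\cdot\tilde i) = (\gamma'+\gamma)\cdot\tilde i$; and for an arbitrary edge $\tilde e = \gamma\cdot\tilde{ij}$ of $\tilde G$, which has tail $\gamma\cdot\tilde i$ and head $(\gamma+\gamma_{ij})\cdot\tilde j$, one computes $\vec p_{\head(\tilde e)} - \vec p_{\tail(\tilde e)} = \vec q_j + \vec L\gamma_{ij} - \vec q_i = \alpha_{ij}\vec d_{ij}$; since $\tilde{\vec d}$ is orbit-constant, $\vec d_{ij} = \tilde{\vec d}_{\tilde e}$, so $\tilde G(\vec p, \vec L)$ is a realization (with $\alpha_{\tilde e} := \alpha_{ij}$).

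These two maps are mutually inverse: restricting a reconstructed $\vec p$ to representatives recovers $\vec q$ since $\tilde i = (0,0)\cdot\tilde i$, and reconstructing from a restriction recovers the original $\vec p$ by its equivariance; $\vec L$ is never altered. This gives the asserted bijection. For the collapsed claim, the computation above shows $\vec p_{\head(\tilde e)} - \vec p_{\tail(\tilde e)} = \vec q_j + \vec L\gamma_{ij} - \vec q_i$ for every translate $\tilde e$ of $\tilde{ij}$, independently of $\gamma$; hence $\tilde e$ is collapsed exactly when $ij$ is, and since every edge of $\tilde G$ is a translate of a lift, all edges of $\tilde G$ are collapsed iff all edges of $G$ are, which is precisely that $\tilde G(\vec p, \vec L)$ is collapsed iff $G(\vec q, \vec L)$ is. I do not expect a genuine obstacle here: the lemma is bookkeeping of the same flavor as \lemref{dictionary}, and the only points demanding care are using freeness of $\varphi$ to make the reconstruction well-defined and the (implicit, and in fact necessary) orbit-invariance of $\tilde{\vec d}$ that lets $\vec d$ be defined on the quotient.
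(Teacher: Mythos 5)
Your proof is correct and follows the same route as the paper: the paper's own argument is a two-sentence sketch that restricts a periodic realization to orbit representatives in one direction and extends a colored realization via the $\Z^2$-action in the other, exactly as you have spelled out in detail. Your additional observation that $\tilde{\vec d}$ must (up to scale) be constant on edge orbits for the quotient direction assignment to be well defined is a reasonable point the paper leaves implicit.
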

\begin{proof}
The proof is very similar to that of \lemref{dictionary}.
Any realization $G(\vec p,\vec L)$ of $(G,{\bm{\gamma}},\vec d)$ can be extended to a
$\tilde G(\vec p,\vec L)$ realization of $(\tilde G,\varphi,\vec d)$
via the $\mathbb{Z}^2$-action induced by ${\bm{\gamma}}$; in the other direction, a periodic
realization $\tilde G(\vec p,\vec L)$ induces a colored realization of
$(G,{\bm{\gamma}},\vec d)$ via the vertex representatives in $\tilde{G}$ of the vertices
of the quotient graph $G$.
\end{proof}

We define the \emph{colored direction network realization system} $\vec P(G,{\bm{\gamma}},\vec d)$ to be given by:
\[
\iprod{\vec p_j+\vec L\gamma_{ij}-\vec p_i}{\vec d^\perp_{ij}}=0\qquad \text{for all edges $ij\in E(G)$}
\]
The unknowns are the points $\vec p_i$ and the matrix $\vec L$; the given data are the edge
directions $\vec d_{ij}$.

\section{Properties of colored direction networks}\seclab{colored-realization}
We develop the properties of the system $\vec P(G,{\bm{\gamma}},\vec d)$ that we will need.

\subsection{Collapsed realizations of colored-Laman graphs}
Collapsed realizations of colored direction networks on colored-Laman graphs have a simple form: they
force the lattice representation to be trivial and put all the points on top of each other.
\begin{lemma} \lemlab{collapsedequivalence}
Let $(G, \bm{\gamma})$ be colored-Laman.  Then, a realization $G(\vec p, \vec L)$ of $(G, \bm{\gamma}, \vec d)$
is collapsed if and only if $\vec L=\begin{pmatrix} 0 & 0 \\ 0 & 0\end{pmatrix}$ and
$\vec p_i = \vec p_j$ for all $i, j \in V(G)$.
\end{lemma}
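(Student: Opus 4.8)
The plan is to prove the two implications separately. The reverse direction is immediate: if $\vec L$ is the zero matrix and all the $\vec p_i$ coincide, then for every edge $ij \in E(G)$ we have $\vec p_i = \vec p_j = \vec p_j + \vec L\gamma_{ij}$, so $ij$ is collapsed and hence the realization is collapsed. All the content is in the forward direction, and it has two ingredients: a combinatorial fact about colored-Laman graphs, and a telescoping computation around cycles.

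First I would record the combinatorial fact: a colored-Laman graph $(G,\bm{\gamma})$ is connected and has $\operatorname{rk}_{\mathbb{Z}^2}(G,\bm{\gamma}) = 2$. This is a one-line consequence of applying the colored-Laman-sparse inequality to $G' = G$ itself: with $m = 2n+1$ edges, $2n+1 \le 2n - 3 + 2\operatorname{rk}_{\mathbb{Z}^2}(G,\bm{\gamma}) - 2(c-1)$ rearranges to $\operatorname{rk}_{\mathbb{Z}^2}(G,\bm{\gamma}) \ge c+1$, and since the $\Z^2$-rank is at most $2$ while $c \ge 1$, we must have $\operatorname{rk}_{\mathbb{Z}^2}(G,\bm{\gamma}) = 2$ and $c = 1$. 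In particular (directly from the definition of the $\Z^2$-rank, or via \lemref{z2-basis}), there are two cycles $C_1, C_2$ in $G$ whose images $\rho(C_1), \rho(C_2)$ are linearly independent in $\Z^2$, hence span $\R^2$.

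Next, assuming $G(\vec p, \vec L)$ is collapsed means $\vec p_i = \vec p_j + \vec L\gamma_{ij}$ for every edge $ij \in E(G)$. I would sum this relation around a simple cycle $C$ in a fixed traversal order: the left-hand side telescopes to zero, while — matching the orientation of each edge against the direction in which it is traversed, exactly as in the definition of $\rho(C)$ from \secref{graphs} — the right-hand side equals $\vec L\,\rho(C)$. (Self-loops are the degenerate case: a self-loop $ij$ contributes the single relation $\vec L\gamma_{ij} = 0$, consistent with treating it as a one-vertex cycle with $\rho$-image $\pm\gamma_{ij}$.) Thus $\vec L\,\rho(C) = 0$ for every cycle $C$; applying this to $C_1$ and $C_2$ and using that their $\rho$-images span $\R^2$ forces $\vec L = 0$. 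Finally, with $\vec L = 0$ the collapsed condition reduces to $\vec p_i = \vec p_j$ along every edge, and connectivity of $G$ (established above) propagates this to all pairs of vertices.

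I do not expect a genuine obstacle. The only step requiring care is the sign/orientation bookkeeping in the telescoping sum, which must be aligned with the convention used in the definition of $\rho(C)$, together with the minor check that self-loops are covered. The fact that $\operatorname{rk}_{\mathbb{Z}^2}(G,\bm{\gamma}) = 2$ and $c = 1$ for a colored-Laman graph, while arithmetically trivial, is worth stating explicitly here since it is used repeatedly in the geometric part of the paper.
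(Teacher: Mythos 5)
Your proof is correct and follows essentially the same route as the paper's: telescope the collapsed relations around two cycles with linearly independent $\rho$-images to force $\vec L = 0$, then use connectivity of $G$ to identify all the points. The only (harmless) difference is that you derive connectivity and $\Z^2$-rank $2$ directly from the sparsity count applied to $G$ itself, whereas the paper obtains these facts by citing \lemref{222decomp} and \lemref{periodiclamancontract}.
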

\begin{proof}
Summing the relations $\vec p_j + \vec L \gamma_{ij} - \vec p_i = 0$
over a cycle $C$ yields the equation $\vec L \rho(C) = 0$.  Since there are two
cycles with linearly independent $\rho(C)$, this implies that
$\vec L = \begin{pmatrix} 0 & 0 \\ 0 & 0\end{pmatrix}$.  The fact that
$\vec p_i = \vec p_j$ for all $i, j \in V(G)$ then follows from the connectedness of
colored-Laman graphs (which follows, for instance, by
\lemref{222decomp} and \lemref{periodiclamancontract}).
The converse is clear.
\end{proof}

\subsection{Translation invariance}
This lemma formalizes the geometric observation that translating any realization of a
colored direction network results in another realization.
\begin{lemma}\lemlab{invariance}
The set of solutions $(\vec p,\vec L)$ to
$\vec P(G,{\bm{\gamma}},\vec d)$ is invariant under translation of the points $\vec p_i$ and scaling of $(\vec p, \vec L)$.
\end{lemma}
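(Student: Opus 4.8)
The plan is to exploit that the left-hand side of every equation in $\vec P(G,{\bm{\gamma}},\vec d)$ is a linear function of the pair $(\vec p,\vec L)$, so it suffices to check that each of the two claimed operations preserves the vanishing of an individual term $\iprod{\vec p_j+\vec L\gamma_{ij}-\vec p_i}{\vec d^\perp_{ij}}$; invariance of the whole solution set then follows by applying this edge by edge.

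First I would treat translation. Fix $\vec t\in\mathbb{R}^2$ and replace every point $\vec p_i$ by $\vec p_i+\vec t$, leaving $\vec L$ unchanged (only the vertex coordinates move; $\vec L$ records the lattice and is not translated). For each edge $ij$,
\[
(\vec p_j+\vec t)+\vec L\gamma_{ij}-(\vec p_i+\vec t)=\vec p_j+\vec L\gamma_{ij}-\vec p_i,
\]
so the argument of the inner product is literally unchanged and every equation of $\vec P(G,{\bm{\gamma}},\vec d)$ remains satisfied. This also covers self-loops, where $i=j$ and the translated difference is still $\vec L\gamma_{ij}$.

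Next I would treat scaling: fix $\lambda\in\mathbb{R}$ and replace $(\vec p,\vec L)$ by $(\lambda\vec p,\lambda\vec L)$. Then $\lambda\vec p_j+\lambda\vec L\gamma_{ij}-\lambda\vec p_i=\lambda(\vec p_j+\vec L\gamma_{ij}-\vec p_i)$, hence
\[
\iprod{\lambda\vec p_j+\lambda\vec L\gamma_{ij}-\lambda\vec p_i}{\vec d^\perp_{ij}}=\lambda\iprod{\vec p_j+\vec L\gamma_{ij}-\vec p_i}{\vec d^\perp_{ij}}=0 .
\]
So the scaled pair is again a solution. Combining the two cases, the solution set of $\vec P(G,{\bm{\gamma}},\vec d)$ is invariant under translation of the $\vec p_i$ and under simultaneous scaling of $(\vec p,\vec L)$, as claimed. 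There is no genuine obstacle here; the only point requiring care is the bookkeeping — the translation must be applied uniformly to all vertices (both endpoints of each edge, and the single vertex of a self-loop) while $\vec L$ stays fixed — after which the statement is immediate from linearity of the realization equations.
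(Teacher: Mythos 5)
Your proof is correct and follows essentially the same route as the paper: the paper verifies both operations in a single combined computation, showing $\iprod{(\lambda\vec p_j+\vec t)+\lambda\vec L\gamma_{ij}-(\lambda\vec p_i+\vec t)}{\vec d^\perp_{ij}}=\lambda\iprod{\vec p_j+\vec L\gamma_{ij}-\vec p_i}{\vec d^\perp_{ij}}$, while you separate translation and scaling, but the content is identical. No issues.
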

\begin{proof}
Let $\vec t$ be a vector in $\mathbb{R}^2$ and $\lambda$ a scalar in $\R$.  Then
\[
\iprod{(\lambda \vec p_j+\vec t)+ \lambda \vec L\gamma_{ij} -
(\lambda \vec p_i-\vec t)}{\vec d^\perp_{ij}} = \lambda \iprod{\vec p_j+\vec L\gamma_{ij}-\vec p_i}{\vec d^\perp_{ij}}
\]
\end{proof}

\subsection{Relationship to the $(2,2,2)$-matroid}
Our main tool for moving back and forth between (geometric) colored direction networks and (combinatorial)
colored graphs is that the system $\vec P(G,{\bm{\gamma}},\vec d)$ is closely related to the
generic representation of the $(2,2,2)$-matroid.
\begin{lemma}\lemlab{realizationmat}
The solutions $(\vec p,\vec L)$ to the system $\vec P(G,{\bm{\gamma}},\vec d)$ are the $(\vec p,\vec L)$ satisfying
\[
\vec M_{2,2,2}(G,{\bm{\gamma}})(\vec p,\vec L)^{\mathsf{T}}=0
\]
\end{lemma}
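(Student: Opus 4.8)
The plan is to verify directly that $\vec P(G,{\bm{\gamma}},\vec d)$ and the linear system $\vec M_{2,2,2}(G,{\bm{\gamma}})(\vec p,\vec L)^{\mathsf{T}}=0$ are literally the same system of equations, compared equation-by-equation (equivalently, row-by-row). Here the matrix $\vec M_{2,2,2}(G,{\bm{\gamma}})$ is understood as the instantiation of the generic filling pattern in which the pair of generic variables $(a_{ij},b_{ij})$ on the row indexed by $ij$ is set to the coordinates of $\vec d^\perp_{ij}$; recording this identification is the one piece of bookkeeping the argument needs.

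First I would fix an edge $ij\in E(G)$ and expand the corresponding equation of $\vec P(G,{\bm{\gamma}},\vec d)$, namely $\langle \vec p_j+\vec L\gamma_{ij}-\vec p_i,\vec d^\perp_{ij}\rangle=0$, using bilinearity of the inner product. Writing $\vec p_i=(x_i,y_i)$, $\vec p_j=(x_j,y_j)$, and $\vec L\gamma_{ij}=\gamma^1_{ij}\vec L_1+\gamma^2_{ij}\vec L_2$ where $\vec L_1,\vec L_2$ are the columns of $\vec L$, i.e.\ the two pairs of entries of $\vec L$ in the order they appear in the solution vector $(\vec p,\vec L)^{\mathsf{T}}$, the equation becomes a linear form in the $2n+4$ unknowns. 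Collecting coefficients: the two coordinates indexed by vertex $i$ carry $-\vec d^\perp_{ij}$; the two coordinates indexed by vertex $j$ carry $+\vec d^\perp_{ij}$; the two coordinates corresponding to $\vec L_1$ carry $\gamma^1_{ij}\vec d^\perp_{ij}$; the two corresponding to $\vec L_2$ carry $\gamma^2_{ij}\vec d^\perp_{ij}$; every other coefficient is zero. Comparing with the displayed filling pattern of $\vec M_{2,2,2}(G,{\bm{\gamma}})$ --- with $(a_{ij},b_{ij})$ read as the coordinates of $\vec d^\perp_{ij}$, and noting that the sign pattern there encodes the orientation of $ij$, so that the $i$-block is negated and the $j$-block is not --- shows this linear form is precisely the $ij$-row of $\vec M_{2,2,2}(G,{\bm{\gamma}})$ applied to $(\vec p,\vec L)^{\mathsf{T}}$.

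Since the edge $ij$ was arbitrary and the two systems are over the same set of unknowns, they have identical solution sets, which is the claim. I do not expect a real obstacle here: the content is a definition-chase, and the only thing requiring care is fixing consistent conventions for the rotation defining $\vec d^\perp$, for the flattening of the $2\times 2$ matrix $\vec L$ into the last four coordinates of the solution vector, and for the orientation-induced signs in the vertex blocks of $\vec M_{2,2,2}$.
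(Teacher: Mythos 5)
Your proposal is correct and is essentially the paper's own proof: both expand $\langle \vec p_j+\vec L\gamma_{ij}-\vec p_i,\vec d^\perp_{ij}\rangle$ by bilinearity, identify $\vec L\gamma_{ij}=\gamma^1_{ij}\vec L_1+\gamma^2_{ij}\vec L_2$, and match the resulting coefficients against the filling pattern of $\vec M_{2,2,2}(G,{\bm{\gamma}})$ with $(a_{ij},b_{ij})$ instantiated as the coordinates of $\vec d^\perp_{ij}$. Your version is merely more explicit about the bookkeeping conventions, which the paper leaves implicit.
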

\begin{proof}
Using the bilinearity of the inner product we get
\[
\iprod{\vec p_j+\vec L\gamma_{ij}-\vec p_i}{\vec d^\perp_{ij}} = \iprod{\vec p_j-\vec p_i}{\vec d^\perp_{ij}}+
\iprod{\vec L_1}{\gamma_{ij}^1\vec d_{ij}^\perp} + \iprod{\vec L_2}{\gamma_{ij}^2\vec d_{ij}^\perp}
\]
where the $\vec L_i$ are the columns of the matrix $\vec L$. In matrix form this is $\vec M_{2,2,2}(G,{\bm{\gamma}})$.
\end{proof}

\lemref{realizationmat} implies that we can determine the dimension of generic
colored direction network realization spaces using our results on natural representations
of the $(2,2,2)$-matroid.
\begin{lemma}\lemlab{realizationrank}
Let $(G,{\bm{\gamma}})$ be a colored graph with $n$ vertices, and $m$ edges.
The generic rank of the system $\vec P(G,{\bm{\gamma}},\vec d)$
(with the coordinates of $\vec p_1, \ldots, \vec p_n$ and entries of $\vec L$ as the unknowns)
is $m$ if and only if $(G,{\bm{\gamma}})$ is $(2,2,2)$-sparse.  In particular,
it is $2n+2$ if and only if $(G,{\bm{\gamma}})$ is a $(2,2,2)$-graph.
\end{lemma}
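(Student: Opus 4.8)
The plan is to recognize the coefficient matrix of the linear system $\vec P(G,{\bm{\gamma}},\vec d)$ as a generic realization of the formal matrix $\vec M_{2,2,2}(G,{\bm{\gamma}})$, and then to read the rank off from the matroidal results of the previous sections.

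First I would invoke \lemref{realizationmat}: its proof shows that the equations of $\vec P(G,{\bm{\gamma}},\vec d)$ are exactly the rows of $\vec M_{2,2,2}(G,{\bm{\gamma}})$ with the generic variables $a_{ij},b_{ij}$ set equal to the coordinates of $\vec d^\perp_{ij}$. Since $\vec d\mapsto(\vec d^\perp_{ij})_{ij\in E(G)}$ is a linear automorphism of $(\mathbb{R}^2)^{E(G)}$, it carries a generic point to a generic point; hence for generic $\vec d$ the rank of the coefficient matrix of $\vec P(G,{\bm{\gamma}},\vec d)$ equals the generic rank of $\vec M_{2,2,2}(G,{\bm{\gamma}})$. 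So it suffices to prove that the generic rank of $\vec M_{2,2,2}(G,{\bm{\gamma}})$ equals $m=|E(G)|$ — that is, the rows of $\vec M_{2,2,2}(G,{\bm{\gamma}})$ are generically linearly independent — if and only if $(G,{\bm{\gamma}})$ is $(2,2,2)$-sparse.

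To prove this I would work inside the ground set $(K_n^{6,4},{\bm{\gamma}})$. We may assume $G$ has at most $6$ copies of each edge and at most $4$ self-loops per vertex, since otherwise $(G,{\bm{\gamma}})$ already fails $(2,2,2)$-sparsity on an obvious small subgraph and, by \lemref{112max} applied to the corresponding minors, the matching rows of $\vec M_{2,2,2}$ are dependent. Then $(G,{\bm{\gamma}})$ is an edge-induced subgraph of $(K_n^{6,4},{\bm{\gamma}})$, and since every row of $\vec M_{2,2,2}(K_n^{6,4},{\bm{\gamma}})$ indexed by an edge of $G$ is supported on the columns of $V(G)\cup\{L_1,L_2\}$, the matrix $\vec M_{2,2,2}(G,{\bm{\gamma}})$ is obtained from the row-submatrix of $\vec M_{2,2,2}(K_n^{6,4},{\bm{\gamma}})$ on $E(G)$ by deleting zero columns, so the two have equal rank. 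Now $\vec M_{2,2,2}(K_n^{6,4},{\bm{\gamma}})$ is a generic matrix, so its generically linearly independent sets of rows are the independent sets of a matroid $\mathcal{L}$; by \corref{M2rep} the bases of $\mathcal{L}$ coincide with the bases of the $(2,2,k)$-matroid, and since a matroid is determined by its bases, $\mathcal{L}$ \emph{is} the $(2,2,k)$-matroid. By \propref{edmonds} and \lemref{222matroid} the independent sets of the $(2,2,k)$-matroid are exactly the $(2,2,2)$-sparse subgraphs of $(K_n^{6,4},{\bm{\gamma}})$, so the rows $E(G)$ are generically independent if and only if $(G,{\bm{\gamma}})$ is $(2,2,2)$-sparse. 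The ``in particular'' is then immediate, since a $(2,2,2)$-graph is by definition a $(2,2,2)$-sparse colored graph with $2n+2$ edges.

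The whole argument is bookkeeping, and the only step that wants care is the passage from \corref{M2rep}, which only pins down the \emph{bases}, to the statement about all independent sets; this is exactly where ``a matroid is determined by its bases'' (together with the fact that linear independence of rows of a fixed generic matrix is itself a matroid) does the work. If one prefers, the ``if'' direction also has a direct proof: a $(2,2,2)$-sparse $(G,{\bm{\gamma}})$ extends, inside $(K_n^{6,4},{\bm{\gamma}})$ on the same $n$ vertices, to a $(2,2,k')$-graph $(B,{\bm{\gamma}})$ — a basis of the $(2,2,k')$-matroid — and \lemref{M2rank} gives $\vec M_{2,2,2}(B,{\bm{\gamma}})$ full row rank, whence its subcollection of rows indexed by $E(G)$ is independent.
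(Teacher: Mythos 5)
Your proposal is correct and follows essentially the same route as the paper, whose entire proof is ``Apply \lemref{realizationmat} and then \cororef{M2rep}.'' The details you supply --- that $\vec d\mapsto \vec d^\perp$ preserves genericity, and that passing from the bases pinned down by \cororef{M2rep} to all independent sets is justified because generic row-independence of a fixed generic matrix is itself a matroid --- are exactly the steps the paper leaves implicit.
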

\begin{proof}
Apply \lemref{realizationmat} and then \cororef{M2rep}.
\end{proof}

\subsection{Genericity for colored direction networks}
Combining Lemmas \ref{lemma:realizationmat} and \ref{lemma:realizationrank} we see that the set of
directions for which the rank of $\vec P(G,{\bm{\gamma}},\vec d)$ is not predicted combinatorially by $(2,2,2)$-sparsity
is a measure-zero algebraic subset of $\mathbb{R}^{2m}$.
\begin{lemma}\lemlab{222genericity}
Let $(G,{\bm{\gamma}})$ be a $\mathbb{Z}^2$-rank $k$ $(2,2,2)$-$\mathbb{Z}^2$-graded-sparse colored graph with $n$ vertices,
$c$ connected components, and $m\le 2n+2k-2c$ edges.  The set of edge directions $\vec d$ such that the rank of $\vec P(G,{\bm{\gamma}},\vec d)$ is $m$ is the (open, dense)
complement of an algebraic subset of $\mathbb{R}^{2m}$.
\end{lemma}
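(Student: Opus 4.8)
The plan is to deduce this directly from the natural representation of the $(2,2,2)$-matroid together with the standard facts about generic matrices recalled in \secref{natural}. First I would use \lemref{realizationmat} to identify the coefficient matrix of the linear system $\vec P(G,{\bm{\gamma}},\vec d)$ with $\vec M_{2,2,2}(G,{\bm{\gamma}})$, the roles of the generic entries $a_{ij},b_{ij}$ being played by the coordinates of the rotated directions $\vec d^\perp_{ij}$. Since $\vec d\mapsto \vec d^\perp$ is a fixed invertible linear map of $\mathbb{R}^{2m}$ (rotating each $\vec d_{ij}$ by $\pi/2$), a direction assignment $\vec d$ is a generic point of $\vec M_{2,2,2}(G,{\bm{\gamma}})$ exactly when the tuple $(\vec d^\perp_{ij})$ is, and the preimage under this map of an algebraic subset is again an algebraic subset with the same properness and density properties.

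Next I would pin down the generic rank. Since $(G,{\bm{\gamma}})$ is $(2,2,2)$-$\mathbb{Z}^2$-graded-sparse with $m\le 2n-2+2k-2(c-1)$ edges, \lemref{realizationrank} (equivalently \cororef{M2rep} applied to the rows indexed by $E(G)$) shows that the generic rank of $\vec M_{2,2,2}(G,{\bm{\gamma}})$, viewed as a matrix of polynomials in the generic variables, is exactly $m$; note that $m\le 2n+2<2n+4$, so this is full row rank. The facts recalled in \secref{natural} then give that the set of realizations at which the matrix drops below rank $m$ is Zariski closed, and it is a \emph{proper} subset because the generic rank $m$ is attained at generic points: concretely, some $m\times m$ minor has determinant equal to a nonzero polynomial in the $a_{ij},b_{ij}$, hence nonvanishing off its zero set.

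Putting these together, the set of tuples $(\vec d^\perp_{ij})\in\mathbb{R}^{2m}$ for which $\vec M_{2,2,2}(G,{\bm{\gamma}})$ has rank $m$ is the complement of a proper algebraic subset, and so is open and dense; pulling back along $\vec d\mapsto \vec d^\perp$ transfers this to the set of direction assignments $\vec d$, which is exactly the assertion of the lemma. The whole argument is essentially bookkeeping on top of Lemmas \ref{lemma:realizationmat} and \ref{lemma:realizationrank} together with \cororef{M2rep}; the only step requiring any care is that the exceptional set is \emph{proper}, i.e.\ that full rank is genuinely achievable — and this is exactly where the $(2,2,2)$-sparsity hypothesis enters (through \cororef{M2rep}), rather than the purely algebraic facts.
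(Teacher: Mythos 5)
Your proposal is correct and follows essentially the same route as the paper: identify the coefficient matrix of $\vec P(G,{\bm{\gamma}},\vec d)$ with $\vec M_{2,2,2}(G,{\bm{\gamma}})$ via \lemref{realizationmat}, invoke \lemref{realizationrank} (equivalently \cororef{M2rep}) to get generic rank $m$ from $(2,2,2)$-sparsity, and observe that the exceptional set is the common zero locus of the $m\times m$ minors, a proper algebraic subset of $\mathbb{R}^{2m}$. The paper states this in two lines; your additional remarks about the invertible linear map $\vec d\mapsto\vec d^\perp$ and the properness of the exceptional set are correct elaborations of the same argument.
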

\begin{proof}
By Lemmas \ref{lemma:realizationmat} and \ref{lemma:realizationrank},
the rank is $m$ unless $\vec d$ is a common zero of all the $m\times m$ minors of the matrix
$\vec M_{2,2,2}(G,{\bm{\gamma}})$, which is a nowhere-dense closed algebraic subset
of $\mathbb{R}^{2m}$.
\end{proof}

\section{Collapse of colored-Laman circuits}\seclab{collapse}

In this short section we prove the main technical lemmas we need for \theoref{periodicparallel}.

\subsection{Generic direction networks on colored-Laman circuits}
Generic colored direction networks on colored-Laman circuits (defined in \secref{cloning}) have very simple
realization spaces: all realizations are collapsed.
\begin{lemma}\lemlab{collapse}
Let $(G,{\bm{\gamma}})$ be a colored-Laman circuit with $n$ vertices, $c$ connected components, $\Z^2$-rank $k$,
and $m= 2n+2k-2c$ edges.  Furthermore, assume that $\vec P(G,{\bm{\gamma}},\vec d)$ has rank $2n+2k - 2c$ (this is
possible by \lemref{222genericity}).  Then all solutions of $\vec P(G,{\bm{\gamma}},\vec d)$ are collapsed.
\end{lemma}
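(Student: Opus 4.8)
The plan is to compare two linear subspaces of $\R^{2n+4}$ -- the full solution space of $\vec P(G,\bgamma,\vec d)$ and its subspace of collapsed solutions -- and show they have the same dimension. First I would record two easy observations. The collapsed realizations of $(G,\bgamma,\vec d)$ form a linear subspace of $\R^{2n+4}$, namely the kernel of the linear map $\Phi\colon (\vec p,\vec L)\mapsto (\vec p_i-\vec p_j-\vec L\gamma_{ij})_{ij\in E(G)}$; and every collapsed realization is a solution of $\vec P(G,\bgamma,\vec d)$, since the zero vector pairs to $0$ with every $\vec d^\perp_{ij}$. Hence it suffices to prove $\dim\ker\Phi=\dim\{(\vec p,\vec L):\vec P(G,\bgamma,\vec d)(\vec p,\vec L)^{\mathsf T}=0\}$.

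Next I would compute $\dim\ker\Phi$ directly. A pair $(\vec p,\vec L)$ lies in $\ker\Phi$ precisely when $\vec p_i-\vec p_j=\vec L\gamma_{ij}$ on every edge. Summing this relation around any cycle $C$ forces $\vec L\rho(C)=0$; since the images $\rho(C)$ generate a subgroup of $\Z^2$ of rank $k$, they span a $k$-dimensional real subspace of $\R^2$, so $\vec L$ is confined to the $(4-2k)$-dimensional space of linear maps annihilating that subspace. Conversely, given any such $\vec L$, choosing a spanning tree and a free value of $\vec p$ at one vertex in each of the $c$ connected components of $G$ determines $\vec p$ consistently on that component: propagating $\vec p_i-\vec p_j=\vec L\gamma_{ij}$ along tree edges is well defined, and the non-tree edge relations then hold automatically because the corresponding fundamental cycles have $\vec L$-annihilated $\rho$-image. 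Since the choice of $\vec L$ and the per-component base points contribute independently, $\dim\ker\Phi=(4-2k)+2c$.

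Finally I would invoke the hypothesis: the matrix of $\vec P(G,\bgamma,\vec d)$ has $2n+4$ columns and, by assumption, generic rank $m=2n+2k-2c$, so its solution space has dimension $(2n+4)-(2n+2k-2c)=4-2k+2c$. This equals $\dim\ker\Phi$, and combined with the containment $\ker\Phi\subseteq\{\text{solutions}\}$ established above, the two subspaces must coincide; hence every solution of $\vec P(G,\bgamma,\vec d)$ is collapsed. For completeness I would also note that the rank hypothesis is attainable: a colored-Laman circuit is $(2,2,2)$-sparse -- removing any edge leaves a colored-Laman-sparse, hence $(2,2,2)$-sparse, graph, and the circuit itself meets the $(2,2,2)$-count with equality (which is also why $m=2n+2k-2c$ holds automatically) -- so \lemref{222genericity} supplies directions $\vec d$ realizing generic rank $m$.

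I expect the only delicate part to be the dimension count for $\ker\Phi$: one must handle the disconnected case, keep careful track of the identification between the $\Z^2$-rank $k$ and the dimension of the real span of $\rho(G,\bgamma)$, and verify the independence claimed above. No individual step is genuinely hard, and the colored-Laman-circuit hypothesis is used only to ensure that the assumed rank equality can occur; under that assumption the argument is essentially a rank--nullity bookkeeping.
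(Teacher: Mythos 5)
Your argument is correct and is essentially the paper's own proof: the paper isolates your dimension count for the collapsed subspace as a separate lemma (constructing $\vec L$ with $\vec L\cdot\rho(C)=0$ and propagating base points along spanning trees to get a $(4-2k+2c)$-dimensional space of collapsed realizations), and then concludes exactly as you do by comparing with the $(4-2k+2c)$-dimensional solution space given by rank--nullity. The only cosmetic difference is that you make explicit the containment of collapsed realizations in the solution space, which the paper uses implicitly.
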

\lemref{collapse} will follow from the following general fact about the subspace of collapsed realizations of any
colored direction network.
\begin{lemma}\lemlab{collapsed-dim}
Let $(G,{\bm{\gamma}})$ be a colored graph with $n$ vertices, $c$ connected components, $\Z^2$-rank $k$.  Any
direction network on the graph $(G,\bgamma)$ has a $(4-2k+2c)$-dimensional space of collapsed realizations.
\end{lemma}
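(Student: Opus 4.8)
The plan is to notice that being a collapsed realization is a condition with no dependence on the directions $\vec d$, so the lemma is a statement in linear algebra about $(G,\bgamma)$ alone. A collapsed realization is exactly a solution $(\vec p,\vec L)$ of the homogeneous linear system
\[
\vec p_j + \vec L\gamma_{ij} - \vec p_i = 0 \qquad \text{for all } ij\in E(G),
\]
whose unknowns are the $2n$ coordinates of $\vec p$ and the $4$ entries of $\vec L$. Writing $\vec p_i = (x_i,y_i)$ and separating each vector equation into its two scalar components, this system splits into two independent subsystems with the \emph{same} coefficient matrix: an ``$x$-system'' in the unknowns $x_1,\dots,x_n$ and the two entries of the first row of $\vec L$, and an identical ``$y$-system'' in $y_1,\dots,y_n$ and the two entries of the second row. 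The collapsed realization space is therefore the direct product of the solution spaces of these two identical subsystems, so it is enough to prove that each has dimension $(2-k)+c$; then the total is $2\big((2-k)+c\big) = 4-2k+2c$.

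To analyze one subsystem, write $\vec t=(t_1,t_2)$ for its two lattice parameters. Summing the edge equations around any cycle $C$ (with the signs dictated by a traversal) telescopes the point terms to zero and leaves the constraint $\rho(C)\cdot\vec t = 0$; by the definition of the $\Z^2$-rank together with \lemref{z2-basis}, the vectors $\rho(C)$ span a rank-$k$ subspace of $\R^2$, so this family of constraints says precisely that $\vec t$ lies in its $(2-k)$-dimensional orthogonal complement. Conversely, given any such $\vec t$, choose a spanning forest $F$ of $G$, assign an arbitrary value to $x$ at one vertex in each of the $c$ connected components, and propagate along the edges of $F$; the fundamental-cycle computation above shows that every non-forest edge equation is then automatically satisfied, precisely because $\vec t$ is orthogonal to $\rho(C)$ for each fundamental cycle $C$. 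Hence the projection of the subsystem's solution space onto $\vec t$ is onto this $(2-k)$-dimensional space, while its kernel consists of the assignments $x$ that are constant on each connected component (with $\vec t=0$), a $c$-dimensional space. By rank--nullity the subsystem's solution space has dimension $(2-k)+c$.

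This completes the count, and combined with the product structure from the first paragraph gives the claimed dimension $4-2k+2c$. I do not expect any real obstacle here: the content is elementary linear algebra, and the only points needing care are that the coordinate-wise splitting is exact (so that the solution space really is a product) and that the spanning-forest propagation contributes exactly one free parameter per connected component --- both of which are immediate from the set-up above.
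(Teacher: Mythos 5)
Your proof is correct and follows essentially the same route as the paper's: the cycle sums force $\vec L\rho(C)=0$ (leaving $4-2k$, or coordinate-wise $2-k$, lattice parameters), one basepoint per connected component is free, and spanning-forest propagation together with the fundamental-cycle identity handles the remaining edges. Your rank--nullity packaging with the $x$/$y$ splitting is a clean reorganization that also makes the upper bound on the dimension explicit, which the paper leaves implicit.
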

The intuition behind this lemma is that we can freely select the position of one vertex in each connected component
which then determines the location of the rest of the vertices in that component, accounting for the $2c$
term.  Additionally, when the $\Z^2$-rank is non-zero, the representation $\vec L$ of the lattice is restricted if
we want to get a collapsed realization, giving the $4-2k$.
\begin{figure}[htbp]
\centering
\subfigure[]{\includegraphics[width=.2\textwidth]{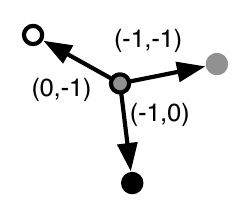}}
\subfigure[]{\includegraphics[width=.25\textwidth]{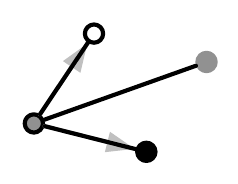}}
\subfigure[]{\includegraphics[width=.35\textwidth]{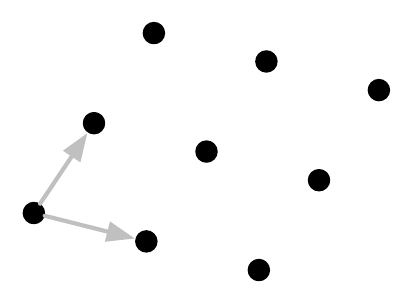}}
\caption{Constructing a collapsed realization of a tree: (a) the underlying colored graph; (b) the location of the
points in a colored realization; (c) in the development, we don't see any of the edges, since all the
vertex orbits are just translates of the same point set, reflecting the fact that the direction
condition is trivially met for a collapsed edge.}
\label{fig:collapse-tree-edges}
\end{figure}
\begin{proof}[Proof of \lemref{collapsed-dim}]
We first account for the $4$ parameters in the lattice representation $\vec L$.  Select the matrix $\vec L$
such that:
\[
\vec L\cdot\rho(C) = 0
\]
for every cycle $C$ in $G'$.  Under this condition, the number of free parameters in $\vec L$
is $4-2k$.

For now, assume that $G$ is connected, let $T$ be a spanning tree of of $G$.  For distinct vertices $v$
and $w$ in $G$, define $P_{vw}$ to be the path from $v$ to $w$ in $T$, and define $\sigma_{vw}\in \Z^2$
to be:
\[
\sigma_{vw} =  \left(\sum_{\substack{\text{$ij\in P_{vw}$ }\\ \text{traversed from $i$ to $j$}}}\gamma_{ij}\right) -
\left(\sum_{\substack{\text{$ij\in P_{vw}$} \\\text{traversed from $j$ to $i$}}}\gamma_{ij}\right)
\]

Select a root vertex $r$ and set $\vec p_r = (x_r,y_r)$ arbitrarily, and
then set $\vec p_i = \vec p_r - \vec L\cdot\sigma_{ri}$.  We check that all edges $ij$ in $G$ are collapsed.
If $ij$ is in the tree $T$, then $\gamma_{ij} = \sigma_{rj} - \sigma_{ri}$.  It then follows that
\[
\vec p_j - \vec p_i = \vec L\cdot\sigma_{ri} - \vec L\cdot\sigma_{rj} = -\vec L\cdot \gamma_{ij}
\]
so all the tree edges are collapsed.  (See \figref{collapse-tree-edges} for an example of the
construction.)
\begin{figure}[htbp]
\centering
\subfigure[]{\includegraphics[width=.3\textwidth]{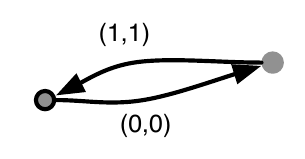}}
\subfigure[]{\includegraphics[width=.2\textwidth]{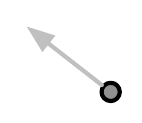}}
\subfigure[]{\includegraphics[width=.25\textwidth]{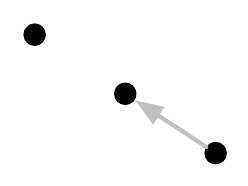}}
\caption{Constructing a collapsed realization of a $\Z^2$-rank $1$ cycle: (a) the underlying colored graph; (b) the location of the
points in a colored realization, with the two vertices on top of each other and the lattice representation degenerate;
(c) in the development, the lattice degenerates in the direction $(1,1)$
so that the cycle ``closes up'' and collapses.}
\label{fig:collapse-non-tree-edges}
\end{figure}

For non-tree edges $ij$, let $C_{ij}$ be the fundamental cycle of $ij$ with respect to $T$.
Using the identity $\rho(C_{ij}) - \gamma_{ij} = \sigma_{ri}  - \sigma_{rj}$ we compute
\[
\vec p_j - \vec p_i =  \vec L\cdot\sigma_{ri} - \vec L\cdot\sigma_{rj} = \vec L\rho(C_{ij}) - \vec L\cdot\gamma_{ij}
\]
and, since $\vec L\cdot\rho(C_{ij}) = 0$ (by construction), the edge $ij$ is collapsed as well.
(\figref{collapse-non-tree-edges} shows an example.)

The general case of the lemma follows from considering the connected components one by one.
\end{proof}

\lemref{collapse} follows nearly immediately from \lemref{collapsed-dim}.
\begin{proof}[Proof of \lemref{collapse}]
The hypothesis of the lemma is that the realization space is $(4-2k+2c)$-dimensional.  By
\lemref{collapsed-dim} the space of collapsed solutions has at least this dimension, so the
two coincide.
\end{proof}

\subsection{Collapsed edges and doubling an edge}
We now turn to the case in which the underlying colored graph of the colored direction network
is not $(2,2,2)$-colored.  In this case, collapsed edges can be given a combinatorial interpretation.

\begin{lemma}\lemlab{collapsecontract}
Let $(G,{\bm{\gamma}},\vec d)$ be a generic colored direction network, and let $ij$ be an edge of $E(G)$.
Suppose every solution $(\vec p,\vec L)$ of $\vec P(G,{\bm{\gamma}},\vec d)$
has $\vec p_i=\vec p_j+\vec L\gamma_{ij}$ (i.e., $ij$ is collapsed).  Then, $(\vec p,  \vec L)$
is a solution to $\vec P(G,{\bm{\gamma}},\vec d)$ if and only if it is a solution to
$\vec P(G+(ij)_c, {\bm{\gamma}},\vec d')$  for any extension $\vec d'$ of the
assignment $\vec d$ to $G+(ij)_c$.
\end{lemma}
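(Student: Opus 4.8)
The plan is to compare the two linear systems directly: $\vec P(G+(ij)_c,\bgamma,\vec d')$ consists of all the equations of $\vec P(G,\bgamma,\vec d)$ together with exactly one additional equation, the one indexed by the doubled edge $(ij)_c$. So the two solution sets can differ only in that this extra equation might cut down the solutions of $\vec P(G,\bgamma,\vec d)$; I would show that under the collapse hypothesis it does not.

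First I would dispatch the trivial inclusion. Since $\vec d'$ extends $\vec d$, i.e.\ $\vec d'_e = \vec d_e$ for every $e\in E(G)$, each equation of $\vec P(G,\bgamma,\vec d)$ occurs verbatim among the equations of $\vec P(G+(ij)_c,\bgamma,\vec d')$; hence any $(\vec p,\vec L)$ solving the latter solves the former. For the reverse direction, the point I would make explicit is that the doubling operation gives $(ij)_c$ the same color $\gamma_{ij}$ as $ij$ (this is how doubling is defined in \secref{graphs}), so the equation contributed by $(ij)_c$ is $\iprod{\vec p_j + \vec L\gamma_{ij} - \vec p_i}{(\vec d'_{(ij)_c})^{\perp}} = 0$ --- involving the same difference vector $\vec p_j + \vec L\gamma_{ij} - \vec p_i$ that governs the edge $ij$. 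Now take any solution $(\vec p,\vec L)$ of $\vec P(G,\bgamma,\vec d)$. By hypothesis $ij$ is collapsed in it, which means precisely $\vec p_j + \vec L\gamma_{ij} - \vec p_i = \vec 0$, so the inner product above vanishes regardless of the value of $\vec d'_{(ij)_c}$. Thus $(\vec p,\vec L)$ also satisfies the equation for $(ij)_c$ and solves $\vec P(G+(ij)_c,\bgamma,\vec d')$, which finishes the argument.

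I do not expect a genuine obstacle here: the statement is a bookkeeping observation about which linear equations appear in each system, combined with the definition of a collapsed edge. The only thing that needs care is keeping the colors straight --- making the reader see that $(ij)_c$ and $ij$ share the difference vector $\vec p_j + \vec L\gamma_{ij} - \vec p_i$, after which the collapse hypothesis makes the new equation automatically true. Genericity of the direction network is not used in the proof itself; it enters only indirectly, as the hypothesis under which the collapse assumption is verified in applications (for instance via \lemref{collapse}).
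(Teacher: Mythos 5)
Your proof is correct and is essentially the paper's own argument: the doubled edge contributes one extra equation $\langle \vec p_j + \vec L\gamma_{ij} - \vec p_i, (\vec d'_{(ij)_c})^{\perp}\rangle = 0$, which is automatically satisfied by every solution because the collapse hypothesis makes the difference vector zero, so adding it does not change the solution set. Your version just spells out the trivial inclusion and the role of the shared color $\gamma_{ij}$ a bit more explicitly than the paper does.
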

\begin{proof}
Since every solution of $\vec P(G,{\bm{\gamma}},\vec d)$ has $\vec p_j +\vec L \gamma_{ij} - \vec p_i = 0$,
we can add a new constraint of the form $\langle \vec p_j +\vec L \gamma_{ij} - \vec p_i, (a, b) \rangle = 0$
without changing its solution set.  This is equivalent to a system of the
form $\vec P(G+(ij)_c, {\bm{\gamma}},\vec d')$ where $\vec d'$ is an extension of the
assignment $\vec d$ to $G+(ij)_c$.
\end{proof}

\section{Genericity and Proof of \theoref{periodicparallel}}\seclab{parallel}
We are nearly ready to prove \theoref{periodicparallel}.
\subsection{Genericity for colored-Laman direction networks}
The last technical tool we need is a description of the set of generic directions for direction networks
on colored-Laman graphs.
\begin{lemma}\lemlab{genericity}
Let $(G,{\bm{\gamma}})$ be a colored-Laman graph on $n$ vertices (and thus $m=2n+1$ edges).
The set of directions $\vec d\in \mathbb{R}^{4n+2}$
such that:
\begin{itemize}
\item $\vec P(G,{\bm{\gamma}},\vec d)$ has rank $2n+1$
\item For all edges $ij\in E(G)$, $\vec P(G+(ij)_c,{\bm{\gamma}},\vec d')$ has rank $2n+2$ for some $\vec d'$ extending $\vec d$
\end{itemize}
is the open, dense complement of an algebraic subset of $\mathbb{R}^{4n+2}$
\end{lemma}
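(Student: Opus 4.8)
The plan is to use the standard template for genericity statements of this kind: the two conditions defining the good set $\mathcal{G}\subseteq\mathbb{R}^{4n+2}$ are complements of algebraic subsets, so it suffices to produce a single $\vec d$ satisfying both, since a proper algebraic subset is nowhere dense. All the combinatorial input is packaged in \lemref{periodiclamancontract}, which says that for a colored-Laman $(G,\bgamma)$ the doubled graph $(G+(ij)_c,\bgamma)$ is a $(2,2,2)$-graph for every edge $ij$, together with \lemref{realizationmat} and \lemref{realizationrank}, which read the generic rank of $\vec P(\cdot)$ off of $(2,2,2)$-sparsity.

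I would first handle the second bullet one edge at a time. Fix $ij\in E(G)$. By \lemref{periodiclamancontract} the graph $(G+(ij)_c,\bgamma)$ is a $(2,2,2)$-graph on $n$ vertices, so \lemref{realizationrank} (via \lemref{realizationmat}) gives that some $(2n+2)\times(2n+2)$ minor $D_{ij}$ of $\vec M_{2,2,2}(G+(ij)_c,\bgamma)$ is a nonzero polynomial in the $4n+4$ direction variables: the $4n+2$ variables $\vec d$ of $G$ together with the pair $(a_0,b_0)$ attached to the new copy $(ij)_c$. Writing $D_{ij}=\sum_{k,l}q^{(ij)}_{k,l}(\vec d)\,a_0^k b_0^l$ as a polynomial in $(a_0,b_0)$ with coefficients in $\mathbb{R}[\vec d]$, the fact that $D_{ij}\not\equiv 0$ forces some coefficient $q_{ij}:=q^{(ij)}_{k_0,l_0}$ to be a nonzero polynomial in $\vec d$ alone. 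Let $U_{ij}=\{\vec d:q_{ij}(\vec d)\neq 0\}$, the complement of a proper algebraic subset of $\mathbb{R}^{4n+2}$. If $\vec d\in U_{ij}$ then $D_{ij}(\vec d,\,\cdot\,,\,\cdot\,)$ is not the zero polynomial in $(a_0,b_0)$, so we may pick $(a_0,b_0)$ with $D_{ij}\neq 0$; the resulting extension $\vec d'$ of $\vec d$ to $G+(ij)_c$ then makes $\vec P(G+(ij)_c,\bgamma,\vec d')$ have rank $2n+2$, which is exactly the second bullet for $ij$.

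Next I would observe that on the set $\bigcap_{ij\in E(G)}U_{ij}$ — still dense, open, and nonempty, being a finite intersection of dense opens — the first bullet comes for free: deleting the row indexed by $(ij)_c$ from $\vec M_{2,2,2}(G+(ij)_c,\bgamma)$ with directions $\vec d'$ returns $\vec M_{2,2,2}(G,\bgamma)$ with directions $\vec d$, and deleting one row from a rank-$(2n+2)$ matrix with $2n+2$ rows leaves a matrix of rank exactly $2n+1$; by \lemref{realizationmat} this is the first bullet. Hence $\bigcap_{ij}U_{ij}\subseteq\mathcal{G}$, so $\mathcal{G}$ is dense. Finally $\mathcal{G}$ is itself the complement of an algebraic subset of $\mathbb{R}^{4n+2}$: failing the first bullet is the simultaneous vanishing at $\vec d$ of all $(2n+1)\times(2n+1)$ minors of $\vec M_{2,2,2}(G,\bgamma)$, and failing the second bullet for a fixed $ij$ means that for every $(2n+2)\times(2n+2)$ submatrix $S$ the polynomial $\det_S(\vec d,a_0,b_0)$ vanishes identically in $(a_0,b_0)$, i.e. all of its $(a_0,b_0)$-coefficients vanish at $\vec d$; both are polynomial conditions on $\vec d$, and $\mathcal{G}^c$ is their finite union. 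Since this algebraic set misses $\bigcap_{ij}U_{ij}\neq\emptyset$, it is proper, so $\mathcal{G}$ is the open dense complement of an algebraic subset, as claimed.

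The genuinely routine parts are the algebraic-geometry facts recalled in \secref{natural} and the bookkeeping of which submatrix of $\vec M_{2,2,2}$ recovers which smaller natural-representation matrix. The one place that requires care is turning the existential ``for some $\vec d'$ extending $\vec d$'' in the second bullet into a bona fide generic condition on $\vec d$: this is precisely the coefficient-extraction step, where $D_{ij}\not\equiv 0$ as a polynomial in all $4n+4$ variables is used to produce a single $(a_0,b_0)$-coefficient that is a nonzero polynomial in $\vec d$, and I expect that to be the main thing to verify carefully.
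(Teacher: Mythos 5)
Your proposal is correct and follows essentially the same route as the paper, which simply applies the generic-rank statement for $(2,2,2)$-sparse graphs (\lemref{222genericity}, via \lemref{realizationmat}, \lemref{realizationrank}, and \lemref{periodiclamancontract}) to $(G,\bgamma)$ and to each doubled graph $(G+(ij)_c,\bgamma)$ and takes the finite union of the resulting proper algebraic bad sets. Your coefficient-extraction step, turning the existential ``for some $\vec d'$ extending $\vec d$'' into a proper algebraic condition on $\vec d$ alone, carefully fills in the projection from $\mathbb{R}^{4n+4}$ to $\mathbb{R}^{4n+2}$ that the paper's two-line proof leaves implicit.
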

\begin{proof}
Applying \lemref{222genericity}	 to $(G,{\bm{\gamma}})$ and each graph $(G+(ij)_c,{\bm{\gamma}})$
yields a finite set of nowhere dense algebraic subsets
of $\mathbb{R}^{4n+2}$ for which the statement of the lemma does not hold.
The union of these is algebraic and nowhere dense, as required.
\end{proof}

\subsection{Remark on genericity}
We remark that non-generic sets of directions come in two types:
\begin{itemize}
\item Those for which $\vec P(G,{\bm{\gamma}},\vec d)$ has rank less than $2n+1$
\item Those for which some $\vec P(G+(ij)_c,{\bm{\gamma}},\vec d')$ has rank less than $2n+2$
\end{itemize}
Both of these conditions are necessary for the proof of \theoref{periodicparallel}, and thus the genericity
assumption given here can't be weakened too much.  They also have slightly different geometric interpretations:
\begin{itemize}
\item If the rank of $\vec P(G,{\bm{\gamma}},\vec d)$ is not maximum, then there is a larger than expected space
of non-collapsed realizations preserving the given directions.  These additional degrees of freedom translate
to non-trivial infinitesimal motions of periodic frameworks via a standard trick from parallel redrawing.
\item The rank of $\vec P(G+(ij)_c,{\bm{\gamma}},\vec d')$ not increasing means that the given directions are not
realizable as part of the difference set of points in the plane, which implies collapsed edges even before
doubling.  Intuitively, the rank of the colored direction network system doesn't rise when doubling a collapsed
edge because there is no new constraint on its direction.
\end{itemize}

\subsection{Proof of \theoref{periodicparallel}}
Let $(G,{\bm{\gamma}})$ be a colored-Laman graph, and
select $\vec d$ as in \lemref{genericity}.
By \lemref{realizationrank}, $\vec P(G,{\bm{\gamma}},\vec d)$ has a $3$-dimensional solution space.
The set of collapsed solutions is two-dimensional by \lemref{collapsedequivalence}.  Hence, there is
a solution $\vec p=\hat{\vec p}$ and
$\vec L=\hat{\vec L}$ that is not collapsed, and by \lemref{invariance} we can assume that
$\hat{\vec p}_1 = (0, 0)$.  Any other solution with $\vec p_1 = (0, 0)$ is $(\hat{\vec p}, \hat{\vec L})$
up to scaling by some real number $\lambda$.

We suppose, for a contradiction, that some edge $ij$ is collapsed in $(\hat{\vec p}, \hat{\vec L})$.
Because all the realizations are scalings of $(\hat{\vec p}, \hat{\vec L})$,
$ij$ must be collapsed in all realizations. It follows from \lemref{collapsecontract} that
$\vec P(G,{\bm{\gamma}},\vec d)$ has the same solution space as
$\vec P(G+(ij)_c,{\bm{\gamma}},\vec d')$ where $\vec d'$ is chosen as in \lemref{genericity}.

The combinatorial \lemref{periodiclamancontract} implies that $(G+(ij)_c,{\bm{\gamma}})$ is $(2,2,2)$-colored.
By the hypothesis on $\vec d'$, from \lemref{genericity}, $\vec P(G+(ij)_c,{\bm{\gamma}},\vec d)$ has full
rank, and then \lemref{collapse} implies that all solutions of $\vec P(G+(ij)_c,{\bm{\gamma}},\vec d')$,
and thus $\vec P(G,{\bm{\gamma}},\vec d)$ are collapsed.  This contradicts
our assumption that $(\hat{\vec p}, \hat{\vec L})$ is not collapsed, proving that,
if $(G,{\bm{\gamma}})$ is colored-Laman and $\vec d$ is chosen generically as in \lemref{genericity}, then all
realizations with at least one non-collapsed edge are faithful.

As noted above, the realization space is three dimensional.  \lemref{invariance} shows that there is
a $2$-dimensional subspace of translations and, since the system is homogenous, scaling provides
an independent dimension of realizations.  This proves that the faithful realization is unique
up to translation and scale.

In the other direction, if $(G,{\bm{\gamma}})$ is not colored-Laman, then \cororef{M2rep} and \lemref{collapse},
applied to colored-Laman circuit supplied by \lemref{circuits} implies that some edge collapses.
\hfill $\qed$

\section{Periodic and colored rigidity}\seclab{continuous}
With \theoref{periodicparallel} proved, we return from the setting of direction networks to that of
bar-joint rigidity.  Sections \ref{section:continuous}--\ref{section:generic}
follows the same three-step outline used for the 1d-periodic case in \secref{1d-full}, going from
the continuous rigidity theory to the combinatorics of colored-Laman graphs and then (generically),
back again.  We start by recalling the definition of periodic frameworks from the introduction.

\subsection{Periodic frameworks}
A periodic framework is defined by a triple $(\tilde G, \varphi,\tilde{\bm{\ell}})$ where:
$\tilde{G}$ is a simple infinite graph; $\varphi$ is a free $\mathbb{Z}^2$-action on $\tilde{G}$ by automorphisms
such that the quotient is finite; and $\tilde{\bm{\ell}}=(\tilde{\ell_{ij}})$ assigns a
length to each edge of $\tilde G$.

A \emph{realization} $\tilde G(\vec p,\vec L)$ of a periodic framework $(\tilde G, \varphi,\tilde{\bm{\ell}})$
is defined to be a mapping $\vec p$ of the vertex set $V(\tilde G)$ into $\mathbb{R}^2$ and a representation
$\Z^2 \to \R^2$ encoded by a matrix $\vec L \in \R^{2 \times 2}$ (with $\R^2$ here viewed as translations) such that:
\begin{itemize}
\item the representation is equivariant with respect to the $\Z^2$-actions on $\tilde{G}$ and the plane; i.e.,
$\vec p_{\gamma\cdot i} = \vec p_i + \vec L\cdot \gamma$ for all $i\in V(\tilde{G})$ and
$\gamma\in \Z^2$.
\item The specified edge lengths are preserved by $\vec p$; i.e., $||\vec p_i - \vec p_j||=\tilde{\ell}_{ij}$ for all
edges $ij\in E(\tilde{G})$.
\end{itemize}
To be realizable, a periodic framework needs to assign the same length to edges in the same $\Z^2$-orbit, and
from now on we make this assumption, since we are interested in analyzing generic realizations.

\subsection{Periodic rigidity and flexibility}
The \emph{realization space} of a periodic framework is defined to be the algebraic set
$\Real(\tilde{G}, \varphi,\bm{\tilde{\ell}})$ of all realizations.  The group of $2$-dimensional
Euclidean isometries, $\operatorname{Euc(2)}$, acts naturally on
$\Real(\tilde{G}, \varphi,\bm{\tilde{\ell}})$; for $\phi \in \operatorname{Euc(2)}$ with
rotational part $\phi_0 \in \operatorname{Euc(2)}$, the action is
given by
\[\phi(\tilde{G}(\vec p, \vec L)) = \tilde{G}(\phi(\vec p), \phi_0 \circ \vec L)
\]
The \emph{configuration space}
$\Conf(\tilde{G}, \varphi,\bm{\tilde{\ell}})=\Real(\tilde{G}, \varphi,\bm{\tilde{\ell}})/\operatorname{Euc(2)}$
is defined to be the quotient of the realization space by Euclidean motions.
A realization $\tilde G(\vec p,\vec L)$ is \emph{rigid} if $\tilde{G}(\vec p,\vec L)$ is isolated in
the configuration space and \emph{minimally rigid} if it is rigid but ceases to be so when the
$\mathbb{Z}^2$-orbit of any edge $ij\in E(\tilde G)$
is removed.  Since $\Real(\tilde{G}, \varphi,\bm{\tilde{\ell}})$ is a subset of an infinite-dimensional space,
its topology merits some discussion.  The interested reader can refer to \cite[Appendix A]{MT10}\footnote{The reference
\cite{MT10} is a previous version of the present paper.}.

\subsection{Main theorem}
We can now state our main theorem:
\periodiclaman
The proof will make use of (technically simpler) colored frameworks, which we now define.
\subsection{Colored frameworks}
A priori, the realization space $\Real(\tilde{G}, \varphi, \bm{\tilde{\ell}})$ could be an unwieldy infinite dimensional object.
However, since $\tilde{G}/\Z^2$ is finite, the realization space is really finite dimensional.  We now make this precise
via the following definition.

A {\em $\Z^2$-colored framework} is defined as a triple $(G, {{\bm{\gamma}}}, \bm{\ell})$
where $(G, \bm{\gamma})$ is a $\Z^2$-colored graph and
$\bm{\ell}=(\ell_{ij})_{ij\in E(G)}$ is an assignment of lengths to the
edges of $G$.

A {\em realization} $G(\vec p,\vec L)$ of a $\Z^2$-colored
framework $(G, {{\bm{\gamma}}}, \bm{\ell})$ is an assignment $\vec p=(\vec p_i)_{i\in V(G)}$ of
points to the vertices of $G$ and a choice of matrix $\vec L \in \R^{2 \times 2}$ such that
for all $ij \in E(G)$ we have
\begin{equation}
\label{eq:lengths}\|\vec p_j+ \vec L\cdot {\gamma}_{ij} - \vec p_i \|^2=\ell_{ij}^2
\end{equation}
\begin{figure}[htbp]
\centering
\includegraphics[width=0.9\textwidth]{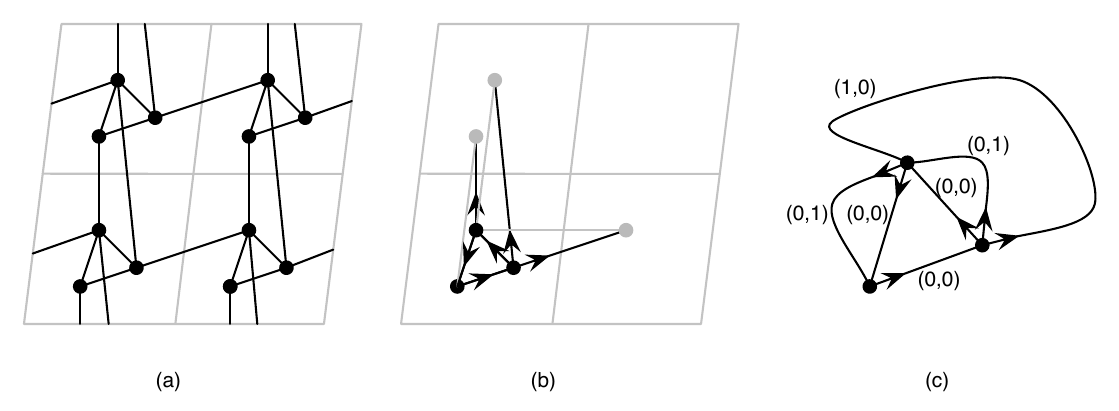}
\caption{The correspondence between periodic and colored frameworks: (a) a periodic framework;
(b) the associated colored framework; (c) the
underlying colored graph.}
\label{fig:framework-exampleX}
\end{figure}

It is clear from the definition that the realization space $\Real(G, {{\bm{\gamma}}}, \bm{\ell})$ is
naturally identified with a subvariety of $\R^{2n + 4} = (\R^2)^n \times \R^{2\times 2}$ where $n = |V(G)|$.
As with Lemma \ref{lemma:dictionary}, there is a dictionary between triples $(G, {{\bm{\gamma}}}, \bm{\ell})$
and triples $(\tilde{G}, \varphi, \bm{\tilde{\ell}})$ where $\tilde{G}$ is the development of $(G,\bgamma)$ and
$\tilde{\bm{\ell}}$ is obtained by assigning $\ell_{ij}$ to every edge in the fiber over $ij\in E(G)$.

\subsection{Continuous rigidity of colored frameworks}
As in the theory of finite (uncolored) frameworks in Euclidean space, if no vertex is
``pinned down,'' then there are always trivial motions of a
realization that arise from Euclidean isometries.  A realization is then rigid if these are
the only motions.  We now make the various notions of rigidity precise in the colored framework setting.

The isometry group $\operatorname{Euc(2)}$ of the Euclidean plane  acts naturally on $\Real(G, {{\bm{\gamma}}}, \bm{\ell})$.
For any $\phi \in \operatorname{Euc(2)}$, let $\phi_0 \in  \R^{2\times 2} $ be the rotational part.  Then the action
$$ \phi\cdot( \vec p_1, \dots, \vec p_n, \vec L) = (\phi(\vec p_1), \dots, \phi(\vec p_n), \phi_0 \cdot \vec L)$$ preserves
$\Real(G, {{\bm{\gamma}}}, \bm{\ell})$.  We define motions given by $\operatorname{Euc(2)}$ to be trivial, and we
define the {\em configuration space} $\Conf(G, {{\bm{\gamma}}}, \bm{\ell})$ to be
$\Real(G, {{\bm{\gamma}}}, \bm{\ell})/ \operatorname{Euc(2)}$.

Let $(G, {{\bm{\gamma}}}, \bm{\ell})$ be a $\Z^2$-colored framework.
A realization $G(\vec p, \vec L)$ of $(G, {{\bm{\gamma}}}, \bm{\ell})$ is {\em rigid} if
the corresponding point in $\Conf(G, {{\bm{\gamma}}}, \bm{\ell})$ is isolated.
Otherwise, it is {\em flexible}.  If $G(\vec p, \vec L)$ is rigid and is flexible
after the removal of any edge, we say $G(\vec p, \vec L)$ is {\em minimally rigid}.

\subsection{Equivalence of periodic and colored frameworks}
The following proposition can be obtained from \cite{BS10} by translating the arguments
into the setting of colored frameworks.
\begin{prop}[\frameworkdictionary][{\cite[Theorem 3.1]{BS10}}]\proplab{frameworkdictionary}
Let $(\tilde{G}, \varphi, \bm{\tilde{\ell}})$ be a periodic
framework and $(G, \bm{\gamma}, \bm{\ell})$ an associated $\Z^2$-colored graph.
There is a natural homeomorphism $\Psi: \Real(\tilde{G}, \varphi, \bm{\tilde{\ell}}) \to \Real(G, \bm{\gamma}, \bm{\ell})$
respecting the action of $\operatorname{Euc(2)}$.  In particular,
$\tilde{G}(\vec{\tilde{p}}, \vec L)$ is rigid if and only if
$\Psi(\tilde{G}(\vec{\tilde{p}}, \vec L))$ is rigid.
\end{prop}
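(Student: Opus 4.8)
The plan is to write down the correspondence $\Psi$ explicitly, verify that it is a well-defined bijection with continuous inverse, check that it intertwines the two $\operatorname{Euc(2)}$-actions, and then read off the statements about rigidity by passing to the quotient configuration spaces. First I would define $\Psi$. The data of a realization $\tilde G(\vec{\tilde{p}}, \pi)$ consists of a homomorphism $\pi : \Z^2 \to \R^2$, equivalently a matrix $\vec L \in \Mat_{2\times 2}(\R)$ with $\pi(\gamma) = \vec L\cdot\gamma$, together with the points $\vec{\tilde{p}}_i$ for $i \in V(\tilde G)$. Let $\{\tilde i : i \in V(G)\}$ be the set of $\Z^2$-orbit representatives used in \secref{graphs} to pass from $(\tilde G,\varphi)$ to $(G,\bgamma)$ --- the same choice that determined $\bgamma$ and $\bm{\ell}$ --- and set $\Psi(\tilde G(\vec{\tilde{p}}, \pi)) = G(\vec p, \vec L)$ with $\vec p_i = \vec{\tilde{p}}_{\tilde i}$. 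To see that this lands in $\Real(G,\bgamma,\bm{\ell})$, recall that for $ij \in E(G)$ the lift with tail $\tilde i$ has head $\gamma_{ij}\cdot\tilde j$ by the definition of the color, so equivariance of $\vec{\tilde{p}}$ gives $\vec{\tilde{p}}_{\gamma_{ij}\cdot\tilde j} = \pi(\gamma_{ij}) + \vec{\tilde{p}}_{\tilde j} = \vec L\gamma_{ij} + \vec p_j$, and the periodic length equation for that lift is precisely \eqref{lengths}.

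For the inverse, given $G(\vec p,\vec L)$ I would use the identification of $\tilde G$ with the development (\lemref{dictionary}), so that each vertex of $\tilde G$ is uniquely of the form $\gamma\cdot\tilde i$, and set $\pi(\gamma) = \vec L\gamma$ and $\vec{\tilde{p}}_{\gamma\cdot\tilde i} = \vec L\gamma + \vec p_i$. Freeness of $\varphi$ makes this well-defined, equivariance of $\vec{\tilde{p}}$ is automatic, and since $\bm{\tilde{\ell}}$ is constant on $\Z^2$-orbits of edges the single colored length equation \eqref{lengths} for $ij$ implies the periodic length equation for every lift of $ij$; this map is visibly two-sided inverse to $\Psi$. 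For continuity, give $\Real(\tilde G,\varphi,\bm{\tilde{\ell}})$ the subspace topology from the product topology on $(\R^2)^{V(\tilde G)}\times\Hom(\Z^2,\R^2)$, as discussed in \cite[Appendix A]{MT10}. Then $\Psi$ is the restriction of a coordinate projection (retain the finitely many representative coordinates and the matrix of $\pi$), hence continuous; and under $\Psi^{-1}$ each coordinate $\vec{\tilde{p}}_{\gamma\cdot\tilde i}$ is the affine function $\vec L\gamma + \vec p_i$ of the finite-dimensional data $(\vec p,\vec L)$, so $\Psi^{-1}$ is continuous as well. Thus $\Psi$ is a homeomorphism.

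Equivariance is then a direct check: for $\phi\in\operatorname{Euc(2)}$ with linear part $\phi_0$ and translation part $\vec t$, $\phi$ sends $\tilde G(\vec{\tilde{p}},\pi)$ to $\tilde G(\phi\circ\vec{\tilde{p}},\phi_0\circ\pi)$, and applying $\Psi$ yields representative points $\phi_0\vec{\tilde{p}}_{\tilde i}+\vec t = \phi(\vec p_i)$ together with the matrix $\phi_0\vec L$, which is exactly $\phi\cdot G(\vec p,\vec L)$. Hence $\Psi$ descends to a homeomorphism of configuration spaces $\Conf(\tilde G,\varphi,\bm{\tilde{\ell}})\to\Conf(G,\bgamma,\bm{\ell})$, so a realization is isolated in one configuration space if and only if its image is isolated in the other --- i.e., rigidity is preserved --- and applying the same argument to the frameworks obtained by deleting a $\Z^2$-orbit of edges (respectively the corresponding edge of $G$) gives the minimal-rigidity statement too. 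The one genuinely delicate point, and the step I expect to be the main obstacle, is setting up the topology on the infinite-dimensional space $\Real(\tilde G,\varphi,\bm{\tilde{\ell}})$ and checking that the $\operatorname{Euc(2)}$-quotient is well enough behaved that ``isolated point'' transports along $\Psi$; this is exactly the bookkeeping carried out in \cite{BS10} and \cite[Appendix A]{MT10}, and everything else is a routine translation between the two descriptions.
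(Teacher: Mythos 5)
Your proof is correct and follows essentially the same route the paper takes: the paper does not prove this proposition but cites \cite[Theorem 3.1]{BS10} together with \cite[Appendix A]{MT10} for the topological bookkeeping, and your explicit construction of $\Psi$ via orbit representatives and the development is exactly the ``translation into the colored setting'' the paper alludes to (and mirrors its proofs of Lemmas \ref{lemma:dictionary} and \ref{lemma:dnrealizations}). You also correctly flag the only delicate point, the topology on the infinite-dimensional realization space, and defer it to the same references the paper does.
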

\figref{framework-exampleX} shows the correspondence between periodic and colored
frameworks associated with the same colored graph.

\section{Infinitesimal colored rigidity}\seclab{infinitesimal}
We now introduce \emph{infinitesimal rigidity}, a linearization of
of the rigidity problem that is more tractable than the quadratic system of length equations.
The \emph{rigidity matrix} $\vec M_{2,3,2}(G,{\bm{\gamma}}, \vec p,\vec L)$ of a
colored framework is defined by the differential of the system \eqref{lengths}:
\[
\vec M_{2,3,2}(G, {{\bm{\gamma}}}, \vec p, \vec L)  = \bordermatrix{                   &              &     i       &             &     j      &            &    L_1   & L_2  \cr
& \dots & \dots & \dots & \dots & \dots & \dots  & \dots \cr
ij   &  \dots & -\eta_{ij} & \dots & \eta_{ij} & \dots & \gamma_{ij}^1 \eta_{ij} & \gamma_{ij}^2 \eta_{ij} \cr
& \dots & \dots & \dots & \dots & \dots & \dots & \dots  }
\]
where $\eta_{ij} = \vec p_j + \vec L\cdot\gamma_{ij} - \vec p_i$.  This matrix was first computed in \cite{BS10}.

The kernel of the rigidity matrix is defined to be the space of \emph{infinitesimal motions},
which spans the tangent space $T_{(\vec p,\vec L)}\Real(G,{\bm{\gamma}},\bm{\ell})$ of the realization
space at the point $(\vec p,\vec L)$.

It is shown in \cite{BS10} (and easy to check via direct computation) that the Lie algebra of $\operatorname{Euc(2)}$ always
induces a $3$-dimensional subspace of infinitesimal motions.  A realization $G(\vec p,\vec L)$ is defined to be
\emph{infinitesimally rigid} if the space of infinitesimal motions is $3$-dimensional and \emph{infinitesimally flexible} otherwise.
Infinitesimal rigidity is equivalent to the rigidity matrix having corank 3. Infinitesimal rigidity always implies rigidity,
but the converse holds only up to a nowhere dense set of \emph{non-generic} realizations, which we define below.

\subsection{Genericity for colored frameworks}
A realization $G(\vec p,\vec L)$ is defined to be \emph{generic} if the rank of the rigidity matrix is
maximized over all choices of $\vec p$ and $\vec L$; i.e.,
$\vec M_{2,3,2}(G,{\bm{\gamma}},\vec p,\vec L)$ achieves its generic rank at $G(\vec p,\vec L)$.
The important thing, for our purposes, is that the generic rank of the rigidity matrix
depends only on the underlying colored graph $(G,{\bm{\gamma}})$.

Thus, we define $(G, {{\bm{\gamma}}})$ to be
{\em generically rigid} (resp. {\em flexible}) if generic $G(\vec p, \vec L) \in \R^{2n+4}$ are rigid (resp. flexible).
Similarly define {\em generic infinitesimal rigidity} (resp. {\em flexibility}) of $(G, {{\bm{\gamma}}})$.  We define
$(G, {{\bm{\gamma}}})$ to be {\em generically minimally rigid} if generic $G(\vec p, \vec L)$ are minimally rigid.

The analogue of the following lemma for the non-periodic setting follows from the main theorem of
\cite{AR78}.  This result says intuitively that for \emph{generic} realizations,
continuous and infinitesimal rigidity have the same behavior.  With minor modifications,
the proofs of \cite{AR78} carry over to our setting \cite[Appendix A]{MT10}.
\begin{lemma}[\equivalentlem] \label{lemma:equivalent} A colored graph $(G, {{\bm{\gamma}}})$ is:
\begin{itemize}
\item Generically rigid if and only if it is generically infinitesimally rigid.
\item Generically flexible if and only if it is generically infinitesimally flexible.
\end{itemize}
\end{lemma}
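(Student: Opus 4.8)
The plan is to adapt the argument of Asimow and Roth \cite{AR78} to the colored setting, as the surrounding text indicates; the essential new ingredient is the presence of the lattice variables $\vec L$, and everything else is a translation of the classical proof. Since infinitesimal rigidity is already noted to imply rigidity, the crux is the reverse implication at generic realizations: that generic rigidity forces the generic corank of $\vec M_{2,3,2}(G,{\bm{\gamma}})$ to be exactly $3$.

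First I would set up the ``squared length'' map $f\colon \R^{2n+4}\to\R^m$ sending $(\vec p,\vec L)$ to $(\|\vec p_j+\vec L\cdot\gamma_{ij}-\vec p_i\|^2)_{ij\in E(G)}$, so that $\Real(G,{\bm{\gamma}},{\bm{\ell}})=f^{-1}({\bm{\ell}}^2)$ and the Jacobian of $f$ at $(\vec p,\vec L)$ equals $\vec M_{2,3,2}(G,{\bm{\gamma}},\vec p,\vec L)$ up to the overall factor $2$. Let $r$ be the generic rank of $\vec M_{2,3,2}(G,{\bm{\gamma}})$. Because matrix rank is lower semicontinuous, the locus $U\subset\R^{2n+4}$ where $\vec M_{2,3,2}$ has rank $r$ is Zariski-open, nonempty, hence dense; it is precisely the set of generic realizations, and on $U$ the rank of $f$ is locally constant. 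By the constant-rank theorem, for any $(\vec p,\vec L)\in U$, setting ${\bm{\ell}}$ to be the lengths it induces, the realization space $\Real(G,{\bm{\gamma}},{\bm{\ell}})$ is a smooth submanifold of $\R^{2n+4}$ of dimension $2n+4-r$ in a neighborhood of $(\vec p,\vec L)$; since $U$ is open, nearby realizations also lie in $U$, so no non-generic realizations interfere with this local picture.

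Next I would treat the trivial motions. The $\operatorname{Euc}(2)$-action is $\phi\cdot(\vec p_1,\dots,\vec p_n,\vec L)=(\phi(\vec p_1),\dots,\phi(\vec p_n),\phi_0\vec L)$, and its orbit derivative at any realization is the $3$-dimensional space of trivial infinitesimal motions recorded in the text. For a generic realization the action is moreover \emph{locally free}: when $n\ge 2$ a nontrivial isometry cannot fix two generic points, and even when $n=1$ the relation $\phi_0\vec L=\vec L$ forces $\phi_0=I$ once $\vec L$ is generic of full rank, whence $\phi$ is the identity. Thus the orbit through $(\vec p,\vec L)\in U$ is a $3$-dimensional submanifold of $\Real(G,{\bm{\gamma}},{\bm{\ell}})$, and near $(\vec p,\vec L)$ the configuration space $\Conf(G,{\bm{\gamma}},{\bm{\ell}})$ is a manifold of dimension $(2n+4-r)-3$. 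Now the two equivalences follow by a dimension count. If $(G,{\bm{\gamma}})$ is generically rigid, then the generic $(\vec p,\vec L)$ is isolated in $\Conf$, so $(2n+4-r)-3=0$, i.e.\ $r=2n+1$, i.e.\ $\vec M_{2,3,2}$ has corank $3$ generically, which is generic infinitesimal rigidity; conversely, generic infinitesimal rigidity means corank $3$ at every point of $U$, and ``infinitesimally rigid $\Rightarrow$ rigid'' applied pointwise on $U$ gives generic rigidity. If instead $r<2n+1$, the same count gives $\dim\Conf>0$ near every point of $U$, so every generic realization is flexible and, having corank $>3$, infinitesimally flexible; these observations also show that exactly one of ``generically rigid'' and ``generically flexible'' holds, so all four statements of the lemma line up.

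The step I expect to be the main obstacle is making rigorous the two places where genericity and smoothness interact: verifying that the $\operatorname{Euc}(2)$-action is proper and locally free near generic realizations (so the orbit really is an embedded $3$-manifold and the quotient is locally a manifold near generic points), and checking that the Asimow--Roth regular-value and transversality arguments survive the addition of the $\vec L$ coordinates and the fact that $\Real(G,{\bm{\gamma}},{\bm{\ell}})$ is cut out in $\R^{2n+4}$ rather than in $(\R^2)^n$. These are exactly the ``minor modifications'' alluded to above, and the complete verification is given in \cite[Appendix A]{MT10}.
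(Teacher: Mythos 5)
Your proposal is correct and is essentially the argument the paper has in mind: the paper does not prove \lemref{equivalent} in-text, stating only that the Asimow--Roth proofs carry over with minor modifications (deferring details to \cite[Appendix A]{MT10}), and your constant-rank-theorem plus orbit-dimension count on the Zariski-open locus of maximal rank is exactly that adaptation. One small nit: a reflection across the line through two points does fix both of them, so the local-freeness justification should instead observe that the stabilizer is discrete (or that a generic invertible $\vec L$ is not fixed by any $\phi_0 \neq I$), which leaves the conclusion that orbits are $3$-dimensional unchanged.
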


\section{Generic periodic rigidity: Proof of the Main \theoref{periodiclaman}}\seclab{generic}
This completes the required background, and we are ready to prove our main result.

\subsection{Proof of \theoref{periodiclaman}}
Let $(G,{{\bm{\gamma}}})$ be a colored graph with $n$ vertices and $m=2n+1$ edges.
We may reduce to the case $m = 2n+1$ since if $m \neq 2n+1$, the colored graph $(G, \bm{\gamma})$ is neither colored-Laman nor
generically minimally rigid.
By \lemref{equivalent}, it suffices to verify that the generic rank of $\vec M_{2,3,2}(G,{\bm{\gamma}},\vec p,\vec L)$
is $2n+1$ if and only if $(G,{\bm{\gamma}})$ is colored-Laman, since removing any edge will
lead to a rigidity matrix with corank at least $4$.

First, suppose that $(G,{{\bm{\gamma}}})$ is not colored-Laman.  Then by \lemref{circuits}, it contains a
colored-Laman circuit $(G',\bgamma)$ on $n'> 2$ vertices, $c'$ components, rank $k'$ and $m' = 2n' + 2k' - 2c'$ edges.
This subgraph induces a submatrix $\vec M'$ of the same form as the rigidity matrix with $2n'+2k' -2c'$ rows and $2n'+4$
columns with non-zero entries.

We will show by contradiction that $\vec M'$ has rank less than $2n' + 2k' - 2c'$.  Suppose
$\vec M'$ has full rank.
Consider the direction network on $(G', \bm{\gamma})$ with directions $\vec d$ given by the edge directions
$\eta_{ij}$ of $G(\vec p, \vec L)$.  Since $G(\vec p, \vec L)$ is itself a realization of the direction
network, not all realizations are collapsed.  However, the matrix for the system
$\vec P(G,{\bm{\gamma}},\vec d)$ can be obtained from $M'$ by swapping and negating some columns.
Hence, the system $\vec P(G,{\bm{\gamma}},\vec d)$ has full rank, and by \lemref{collapse},
all solutions are collapsed, a contradiction.  Since $\vec M'$ has the same rank as the corresponding
$2n' + 2k' - 2c'$ rows in the rigidity matrix, $\vec M_{2,3,2}(G, {{\bm{\gamma}}}, \vec p, \vec L)$
must have a row dependency, and thus rank strictly less than $2n+1$.

Now we suppose that $(G, {{\bm{\gamma}}})$ is colored-Laman.
We will show it has full rank by an example.  Construct a generic (in the sense of \lemref{genericity})
direction network $(G,{\bm{\gamma}},\vec d)$ on $(G,{{\bm{\gamma}}})$.
By \theoref{periodicparallel}, for generic $\vec d$, this direction network has a unique,
up to translation and scaling, faithful realization $G(\vec p,\vec L)$.  Thus, for all $ij \in E(G)$,
there is $\alpha_{ij} \neq 0$ such that $\vec p_j + \vec L {{\bm{\gamma}}}_{ij}- \vec p_i = \alpha_{ij} \vec d_{ij}$.
By replacing $\vec d_{ij}^\perp$ with $\vec d_{ij}^\perp/\alpha_{ij}$ and swapping and negating
some columns in $\vec M_{2,2,2}(G, \bm{\gamma})$, we
obtain the rigidity matrix $\vec M_{2,3,2}(G, {{\bm{\gamma}}}, \vec p, \vec L)$.
Since all such operations do not affect the rank, $G(\vec p, \vec L)$ is infinitesimally rigid.
\hfill $\qed$

\subsection{Remarks}
Although we proved the rigidity \theoref{periodiclaman} from the direction network
\theoref{periodicparallel} algebraically, using matrix manipultions, there is a
more geometric way to view the argument.

Let $G(\vec p,\vec L)$ be a realization of a colored framework with underlying colored graph
$(G,\bgamma)$.  This realization induces a colored direction network $(G,\bgamma,\vec d)$, where the
direction $\vec d_{ij} = \vec p_j + \vec L\cdot \gamma_{ij} - \vec p_i$.
Now let $G(\vec p',\vec L')$ be another realization of $(G,\bgamma,\vec d)$.  By construction, we know that,
for every edge $ij$ in the colored graph $(G,\bgamma)$,
\[
\iprod{\vec p_j + \vec L\cdot \gamma_{ij} - \vec p_i}
{(\vec p'_j - \vec p_j + (\vec L - \vec L')\cdot\gamma_{ij} - (\vec p'_i - \vec p_i))^\perp} = 0
\]
In other words, the difference between $(\vec p,\vec L)$ and another realization
of the colored direction network $(G,\bgamma,\vec d)$ turned by $90$ degrees gives an infinitesimal motion of
the colored framework $G(\vec p,\vec L)$. The same fact for planar finite frameworks is classical.

\section{Conclusions and further directions}\seclab{conclusions}
We considered the question of generic combinatorial periodic rigidity in the plane, and, with
\theoref{periodiclaman}, gave a  complete answer.  To conclude we indicate some
consequences and potential further directions.

\subsection{Fixed-lattice frameworks}
Elissa Ross considered a specialization of the planar periodic rigidity problem
in which the lattice representation $\vec L$ is fixed.  She proved, in our language:
\begin{prop}[\fixedtoruslaman][\cite{R09,R11}]
\proplab{fixedtoruslaman}
Let $(\tilde G, \varphi,\tilde{\bm{\ell}})$ be a generic periodic framework and further suppose that
the lattice representation $\vec L$ is fixed, with $\vec L$ non-singular.  Then a generic
realization of $(\tilde G, \varphi,\tilde{\bm{\ell}})$ is minimally rigid if and only if
its quotient graph $(G,\bgamma)$:
\begin{itemize}
\item Has $n$ vertices and $m=2n-2$ edges.
\item Every subgraph $G'$ on $n'$ vertices and $m'$ edges with $\Z^2$-rank zero
satisfies $m'\le 2n'-3$.
\item Every subgraph $G'$ on $n'$ vertices and $m'$ edges satisfies $m'\le 2n' - 2$.
\end{itemize}
\end{prop}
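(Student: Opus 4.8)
The plan is to re-run, with the lattice representation $\vec L$ held fixed and generic within the non-singular locus, the three-step program of Sections \ref{section:continuous}--\ref{section:generic}. With $\vec L$ fixed, rotations of the picture are no longer symmetries, so trivial motions form only a $2$-dimensional space (translations), and the linearized object is the \emph{fixed-lattice rigidity matrix} $\vec M^{\mathrm{fix}}(G,\bgamma,\vec p,\vec L)$ obtained from $\vec M_{2,3,2}(G,\bgamma,\vec p,\vec L)$ by deleting the four columns of the $L_1$ and $L_2$ blocks. An Asimow--Roth argument with the same structure as \lemref{equivalent}, now carried out on the fixed torus and with genericity taken over the pair $(\vec p,\vec L)$ (this is where $\vec L$ non-singular enters), shows that generic rigidity on the fixed torus is equivalent to $\vec M^{\mathrm{fix}}(G,\bgamma)$ having maximal generic rank. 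The proposition then reduces to a \emph{natural representation} statement: $\vec M^{\mathrm{fix}}(K_n^{6,4},\bgamma)$ represents the matroid whose independent sets are the colored graphs satisfying the two displayed subgraph inequalities (call these \emph{Ross-sparse}), so that a generic realization is minimally rigid exactly when $E(G)$ is a basis, i.e.\ when $m=2n-2$ and $(G,\bgamma)$ is Ross-sparse---which is the list of three conditions.

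Two structural observations pin down the two inequalities. First, for a subgraph $G'$ spanning $n'$ vertices the common translation of the vertices of $G'$ always lies in the kernel of the corresponding $n'$-vertex submatrix of $\vec M^{\mathrm{fix}}$, so a row-independent subgraph has at most $2n'-2$ rows. Second, on a connected $\Z^2$-rank-zero subgraph $G'$ the coloring is an exact coboundary, $\gamma_{ij}=\sigma_j-\sigma_i$ for some $\sigma\colon V(G')\to\Z^2$, so the edge vectors $\eta_{ij}=\vec p_j+\vec L\gamma_{ij}-\vec p_i$ equal $\vec q_j-\vec q_i$ with $\vec q_i:=\vec p_i+\vec L\sigma_i$; hence the rows of $\vec M^{\mathrm{fix}}$ indexed by $E(G')$ form an \emph{ordinary} finite rigidity matrix of the generic point configuration $\vec q$, which by Laman's theorem has rank at most $2n'-3$. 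Since minimal violations of (uncolored) Laman-sparsity occur on connected subgraphs, the quantification over all subgraphs in the statement is the right one. These observations give the ``only if'' side of the representation claim once combined with the circuit argument below, and they explain geometrically why the one degree of freedom the fixed-torus model carries beyond the finite model must be supplied by a subgraph of positive $\Z^2$-rank.

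For the remaining direction one re-runs the development of Sections \ref{section:11k}--\ref{section:parallel} with the Ross-sparsity count in place of the $(2,2,k)$ and colored-Laman counts. Concretely: (i) establish that Ross-sparse colored graphs are the independent sets of a matroid by exhibiting the appropriate non-negative monotone submodular generating function and applying \propref{edmonds}, the fixed-lattice analogue of Lemmas \ref{lemma:fmat} and \ref{lemma:232matroid}; (ii) give the determinant formulas for the pieces that play the role of $(1,1,k)$-graphs, paralleling Lemmas \ref{lemma:110rep}--\ref{lemma:112rep}, so that $\vec M^{\mathrm{fix}}$ is seen to represent this matroid; and (iii) following \secref{collapse}, show that a \emph{Ross-circuit} (a minimal non-Ross-sparse colored graph) forces a collapsed edge in every generic realization of the associated fixed-lattice colored direction network $(G,\bgamma,\vec d)$---the realization system now has the $\vec p_i$ as unknowns and $\vec L$ as a generic parameter, with coefficient matrix $\vec M_{2,2,2}(G,\bgamma)$ minus the $L$-columns, so the collapse lemma of \secref{collapse} transfers. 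Granting (i)--(iii), the proof of \secref{parallel} goes through: on a Ross-tight graph the generic direction network realization is faithful and unique up to translation, and substituting its edge vectors $\eta_{ij}=\alpha_{ij}\vec d_{ij}$ back into $\vec M^{\mathrm{fix}}$ and rescaling columns exhibits rank $2n-2$; while if a subgraph violates Ross-sparsity, extracting a Ross-circuit as in \lemref{circuits} and applying (iii) produces a row dependency.

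The main obstacle will be steps (i) and (ii): unlike the $(2,2,2)$ count, the Ross count is not the Dilworth truncation of a matroid already built here---informally it is the minimum of the colored-Laman count and a purely graphic (two-spanning-trees) count, and a minimum of submodular functions need not be submodular, so \lemref{fmat} cannot simply be reused. One must identify by hand the correct non-negative monotone submodular function, verify its hypotheses, and redo the $(1,1,\cdot)$-type determinant bookkeeping of \secref{11k-rep} for the $L$-column-deleted matrix; once that is in place, the direction-network geometry of Sections \ref{section:directions}--\ref{section:parallel} and the transfer back to rigidity in \secref{generic} are a routine adaptation. A minor point already noted is that, because $\vec L$ is a fixed parameter rather than a free variable, ``generic'' must be read throughout as generic over $(\vec p,\vec L)$ with $\vec L$ non-singular, which is exactly the role of the non-singularity hypothesis in the statement.
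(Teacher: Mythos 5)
Your proposal is a program rather than a proof: the sufficiency direction rests entirely on your steps (i)--(iii), and you yourself flag that (i) and (ii) --- exhibiting a non-negative, monotone, submodular function whose \propref{edmonds}-matroid has the Ross-sparse graphs as independent sets, and redoing the determinant bookkeeping for the $L$-column-deleted matrix --- are not carried out and cannot be obtained by reusing \lemref{fmat}, since the Ross count is a minimum of the colored-Laman count and a purely graphic count. That is precisely the hard part, and leaving it as ``one must identify by hand the correct submodular function'' is a genuine gap: without it you have neither the matroid structure needed for the circuit-extraction step nor the representation result that drives the direction-network argument. Your necessity observations, by contrast, are correct and well argued (the common translation forces $m'\le 2n'-2$ on any row-independent subgraph, and on a connected $\Z^2$-rank-zero subgraph the coboundary substitution $\vec q_i=\vec p_i+\vec L\sigma_i$ reduces the relevant rows to an ordinary finite rigidity matrix, giving $m'\le 2n'-3$).

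The paper avoids rebuilding any of this machinery. It obtains \propref{fixedtoruslaman} as a corollary of \theoref{periodiclaman} via a pinning trick: \lemref{rossgraphs} shows that $(G,\bgamma)$ is a Ross graph if and only if adding three self-loops at any one vertex, with colors $(1,0)$, $(0,1)$, $(1,1)$, produces a colored-Laman graph $(G',\bgamma)$; and the rigidity matrix of the augmented graph consists of $\vec M_{2,3,2}(G,\bgamma,\vec p,\vec L)$ together with three extra rows supported only on the $L_1,L_2$ columns, namely $(\vec L_1,\vec L_1)$, $(\vec L_2,\vec L_2)$, $(\vec L_1+\vec L_2,\vec L_1+\vec L_2)$. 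For $\vec L$ non-singular these rows force every infinitesimal motion of $G'(\vec p,\vec L)$ to act trivially on the lattice, so generic fixed-lattice rigidity of $(G,\bgamma)$ is equivalent to generic (flexible-lattice) rigidity of $(G',\bgamma)$, and the already-proved \theoref{periodiclaman} finishes the argument. If you want to salvage your route, the cleanest fix is to adopt this reduction in place of your steps (i)--(iii); the remaining work is then the purely combinatorial verification of \lemref{rossgraphs}, i.e.\ that the three self-loops convert the Ross counts into the colored-Laman counts on every subgraph.
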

We define a colored graph satisfying the properties of \propref{fixedtoruslaman} to be
a \emph{Ross graph}.  Ross graphs are related to colored-Laman graphs via the following
combinatorial equivalence.  (The colored-Laman graph in \figref{colored-laman-not-laman-spanning}
arises from the construction in \lemref{rossgraphs}.)
\begin{lemma}\lemlab{rossgraphs}
Let $(G,\bgamma)$ be a colored graph with $n$ vertices and $m$ edges.  Then $(G,\bgamma)$ is a
Ross graph if and only if for \emph{any} vertex $i\in V(G)$ adding three self-loops at vertex
$i$ with colors $(1,0)$, $(0,1)$, and $(1,1)$ yields a colored-Laman graph $(G',\bgamma)$.
\end{lemma}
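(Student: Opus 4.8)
The plan is to reduce the statement to a comparison of edge‑induced subgraphs of $G$ with those of the colored graph $G'$ obtained from $G$ by attaching the three prescribed self‑loops at $i$ (with the evident coloring), using only that each of the colors $(1,0),(0,1),(1,1)$ is nonzero and that any two of them generate $\Z^2$. Since exactly three edges are added, $G'$ has $2n+1$ edges if and only if $G$ has $2n-2$ edges, so the edge‑count clauses of the two definitions agree automatically, and it remains to show that $G$ satisfies both Ross sparsity conditions exactly when $G'$ is colored‑Laman‑sparse. I will prove ``$G$ Ross $\Rightarrow G'$ colored‑Laman'' for an arbitrary choice of $i$, and the converse assuming the hypothesis only for a single $i$; together these give the asserted equivalence (and incidentally show the quantifier over $i$ may be taken existentially).

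For the converse, suppose $G'$ is colored‑Laman‑sparse and let $H$ be an edge‑induced subgraph of $G$, with $n'$ vertices, $m'$ edges, $c'$ components and $\Z^2$-rank $r'$. Viewing $H$ inside $G'$, colored‑Laman‑sparsity gives $m'\le 2n'-3+2r'-2(c'-1)$; with $r'=0$ this is $m'\le 2n'-3$, the rank‑zero Ross condition. For the general Ross bound $m'\le 2n'-2$ I would pass to the subgraph $H^{+}$ of $G'$ spanned by the edges of $H$ together with all three new loops. If $i\in V(H)$, then $H^{+}$ has $n'$ vertices, $m'+3$ edges, $c'$ components and $\Z^2$-rank $2$, so $m'+3\le 2n'-3+4-2(c'-1)$, hence $m'\le 2n'-2-2(c'-1)\le 2n'-2$. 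If $i\notin V(H)$, the three loops form their own component of $H^{+}$, which then has $n'+1$ vertices, $m'+3$ edges, $c'+1$ components and $\Z^2$-rank $2$, so $m'+3\le 2(n'+1)-3+4-2c'$, hence $m'\le 2n'-2c'\le 2n'-2$. Thus $G$ is a Ross graph.

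For the forward implication, suppose $G$ is Ross and let $H$ be an edge‑induced subgraph of $G'$. Write $H_0$ for the (possibly empty) subgraph of $G$ spanned by the non‑loop edges of $H$, with $n_0$ vertices, $m_0$ edges, $c_0$ components and $\Z^2$-rank $r_0$, and let $t\in\{0,1,2,3\}$ count the new loops in $H$. Applying the two Ross inequalities to each connected component of $H_0$ and summing gives $m_0\le 2(n_0-c_0)-a$, where $a$ is the number of $\Z^2$-rank‑zero components of $H_0$. Attaching the $t$ loops leaves $n_0-c_0$ unchanged (if $i\in V(H_0)$ nothing changes; if $i\notin V(H_0)$ and $t\ge 1$, the vertex and component counts each rise by one), so the colored‑Laman‑sparsity inequality to be checked for $H$ is precisely $m_0+t\le 2(n_0-c_0)-1+2r$, where $r$ is the $\Z^2$-rank of $H$. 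By the bound on $m_0$ it suffices to verify $2r\ge t+1-a$. For $t=0$ this reads $2r_0\ge 1-a$, true because $a\ge 1$ unless every component of $H_0$ has positive rank, in which case $r_0\ge 1$. For $t=1$ the single new loop has nonzero color, so $r\ge 1$ and $2r\ge 2\ge 2-a$. For $t\in\{2,3\}$, $H$ contains two new loops whose colors generate $\Z^2$, so $r=2$ and $2r=4\ge t+1-a$. Hence $G'$ is colored‑Laman‑sparse, and with $2n+1$ edges it is colored‑Laman.

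The main obstacle is the bookkeeping: the $\Z^2$-rank is neither additive over connected components nor transparently monotone under adding self‑loops, and the distinguished vertex $i$ need not lie in the subgraph at hand — when it does not, the three loops contribute an extra component that has to be tracked. Routing the estimates through the combination $2(n-c)-1+2r$, with $r$ the $\Z^2$-rank, and through the component‑wise sum of the Ross inequalities is what collapses the analysis to a handful of one‑line checks.
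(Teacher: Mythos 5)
Your proof is correct. The paper actually states \lemref{rossgraphs} without giving a proof, so there is nothing to compare against; your argument is the natural one and all the steps check out: the edge counts match ($2n+1$ versus $2n-2$), the converse correctly exploits that adjoining all three loops to a subgraph $H$ forces $\Z^2$-rank $2$ whether or not $i\in V(H)$, and in the forward direction the component-wise summation of the two Ross inequalities into $m_0\le 2(n_0-c_0)-a$ is exactly what is needed to recover the $-2(c'-1)$ correction in the colored-Laman count, with the residual verification $2r\ge t+1-a$ holding in each case $t=0,1,2,3$ because the loop colors $(1,0),(0,1),(1,1)$ are pairwise linearly independent. Your observation that the universal quantifier over $i$ can be replaced by an existential one is also a correct (and mildly interesting) byproduct of proving the two directions with different quantifiers.
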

\propref{fixedtoruslaman} can then be obtained from \theoref{periodiclaman}
and the observation that the rigidity matrix of the augmented graph $(G',\bgamma)$ has the form
\[
\vec M_{2,3,2}(G',{\bm{\gamma}}', \vec p, \vec L) =
\begin{array}{c}         \;\;\;\;      i    \;\;\;\;             j             \;\;\;\;\;\;\;\;\;\;\;   L_1   \;\;\;\;\;\;\;\;\;\;\; L_2\;\;\; \\
\left(		\begin{array}{c}   \\          \vec M_{2,3,2}(G, {\bm{\gamma}}, \vec p, \vec L)  \\    \\ \hline
\begin{array}{ccc} 0\dots\dots0\;\; 	& \vec L_1                           & \vec L_1  \\
0\dots \dots0\;\; 		& \vec L_2                           & \vec L_2  \\
0\dots \dots0\;\; 		& \vec L_1 +\vec  L_2                & \vec L_1 +\vec  L_2
\end{array}
\end{array}
\right)
\end{array}
\]
which implies that any infinitesimal motion acts trivially on the lattice representation $\vec L$.

\subsection{Crystallographic rigidity: other symmetry groups}
Our Main \theoref{periodiclaman} does not close the field of Maxwell-Laman-type Theorems for
planar frameworks with forced symmetry.  Perhaps the most natural question raised by the
present work is whether similar results are possible when $\Z^2$ is replaced by another
crystallographic group.

\subsection{Periodic parallel redrawing and scene analysis}
We introduced periodic direction networks with the goal of proving a characterization of
generic infinitesimal periodic rigidity, and thus have focused narrowly
on the properties needed for that purpose.  However, as discussed in the introduction,
there is a more general theory of \emph{parallel redrawing} and \emph{scene analysis},
which relate finite direction networks and frameworks to projections of polyhedral scenes
\cite[Sections 4 and 8]{W96}.  Determining the extent to which these theories generalize
to the periodic case would be very interesting.

\subsection{Group-graded sparsity and algorithmic periodic rigidity}
We introduced and studied two families of colored graphs: colored-Laman graphs and
$(2,2,2)$-graphs.  These are matroidal and, via general augmenting path algorithms
for matroid union, recognizable in polynomial time.

Two combinatorial questions that arise are:
\begin{itemize}
\item Is there a more general theory of matroidal hereditary sparsity for $\Z^d$-colored graphs?
\item Are there cleaner, more efficient algorithms for recognizing colored-Laman and $(2,2,2)$-graphs?
\end{itemize}
For finite frameworks, the answers to both of these questions are affirmative \cite{LS08}.

\subsection{Passing to sub-lattices}
Elissa Ross mentions the following conjecture, which relates to the
example from \secref{finite-index}.
\begin{conj}[{\cite[Conjecture 8.2.8]{R11}}]\conjlab{bob}
Let $\tilde{G}(\vec p,\vec L)$ be an infinitesimally rigid periodic framework with periodic
graph $(\tilde{G},\varphi)$.  Let $\Lambda< \Z^2$ be any sub-lattice, and define $(\tilde{G},\varphi')$ to be the
periodic graph obtained by replacing the $\Z^2$-action $\varphi$ with the induced $\Lambda$-action $\varphi'$.
Then $\tilde{G}(\vec p,\vec L)$ is an infinitesimally
rigid realization of the induced abstract periodic framework on $(\tilde{G},\varphi')$.
\end{conj}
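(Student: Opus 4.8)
The plan is to reduce \conjref{bob} to a statement about ``phased'' rigidity matrices, using the finite quotient $H = \Z^2/\Lambda$ and its action on the sublattice quotient graph. Since a sublattice yielding a finite quotient has finite index $s$ in $\Z^2$, the colored graph $(G', \bgamma')$ associated to $(\tilde{G}, \varphi')$ is an $s$-fold cover of $(G, \bgamma)$, and $H$ acts freely on both its vertices and its edges. The given realization $\tilde{G}(\vec p,\vec L)$, being $\Z^2$-periodic, is in particular $\Lambda$-periodic, and the corresponding realization of the $\Lambda$-colored framework is a fixed point of the induced $H$-action (up to a translation that we normalize away); consequently its rigidity matrix $\vec M_{2,3,2}(G', \bgamma', \vec p, \vec L)$ is $H$-equivariant. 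First I would pass to $\C$ and block-diagonalize $\vec M_{2,3,2}(G',\bgamma')$ along the characters $\chi$ of the abelian group $H$, exactly as in the representation-theoretic treatment of symmetry-forced rigidity. The four columns coming from the lattice matrix $\vec L$ all lie in the trivial isotypic component, so for $\chi\neq 1$ the block $\vec M^\chi$ has $|E(G)|$ rows and exactly $2|V(G)|$ columns, while the trivial block is, up to a natural change of basis, the original matrix $\vec M_{2,3,2}(G,\bgamma,\vec p,\vec L)$.

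The hypothesis that $\tilde{G}(\vec p,\vec L)$ is infinitesimally rigid says precisely that the trivial block has corank $3$. Summing coranks over the isotypic decomposition, the $\Lambda$-framework is infinitesimally rigid if and only if \emph{every} non-trivial block $\vec M^\chi$ has full column rank $2|V(G)|$. So the conjecture comes down to the following: for each non-trivial character $\chi$, the ``$\chi$-phased'' matrix --- which has the shape of the vertex columns of $\vec M_{2,3,2}$, but with the head-contribution of each edge $ij$ scaled by $\chi(\gamma_{ij})\in\C$, evaluated at the given realization --- is injective. Note that the ``obvious'' kernel vectors of a rigidity matrix (infinitesimal translations and the infinitesimal rotation) fail to be $\chi$-equivariant for $\chi\neq 1$, so there is no \emph{a priori} obstruction; the content is that no \emph{accidental} kernel appears at this particular, possibly non-generic, realization.

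To attack this last step one would like to run an analogue of the parallel-redrawing argument behind \theoref{periodicparallel}: the phased matrix $\vec M^\chi$ is a variant of the direction-network matrix $\vec M_{2,2,2}(G,\bgamma)$ in which the coloring $\gamma_{ij}$ is replaced by the phases $\chi(\gamma_{ij})$, so one expects an exact-rank formula governed by a ``$\chi$-colored-Laman'' count together with a twisted version of the collapse analysis of \secref{collapse}. The main obstacle --- and the reason the statement remains a conjecture --- is that \theoref{periodicparallel} and \lemref{collapse} compute \emph{generic} ranks, whereas here the entries $\eta_{ij} = \vec p_j + \vec L\gamma_{ij} - \vec p_i$ come from a fixed realization that need not be generic; infinitesimal rigidity pins down only the trivial block and says nothing directly about the non-generic locus of the phased blocks. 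A purely combinatorial route handles only the generic $\Lambda$-realizations: generic rigidity of $(G,\bgamma)$ is equivalent via \theoref{periodiclaman} to containing a spanning colored-Laman subgraph, and one would want the $s$-fold cover $(G',\bgamma')$ to inherit this property, which should follow from a covering argument for colored-Laman-sparsity; but this says nothing about the special realization produced here. Thus the real work is to promote the generic phased-parallel-redrawing statement to one valid at \emph{every} $\Z^2$-infinitesimally-rigid realization --- perhaps by showing that if the bad locus of some phased block met a torsion point of the character torus it would already force a rank drop of the trivial block --- and I expect this to be the crux.
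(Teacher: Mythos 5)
The statement you set out to prove is false, and the paper's own treatment of \conjref{bob} is a \emph{refutation} by counterexample, not a proof. The example of \secref{finite-index} --- a single vertex with three self-loops whose colors have their second coordinates doubled, so that $\rho(G,\bgamma)$ generates only the index-two subgroup $\Gamma=\langle(1,0),(0,2)\rangle$ --- is colored-Laman, hence generically infinitesimally rigid as a $\Z^2$-periodic framework. But passing to the sub-lattice $\Lambda=\Gamma$ produces the disconnected cover of \figref{colored-laman-bad-sublattice}, whose two components translate independently of one another, so the induced $\Lambda$-periodic framework is infinitesimally flexible. The paper further notes that this mechanism generalizes: whenever $\rho(G,\bgamma)$ generates a proper finite-index subgroup of $\Z^2$, a suitable sub-lattice disconnects the development (\lemref{development-connectivity}) and no spanning colored-Laman subgraph of the cover can exist.

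Your isotypic reduction is sound as far as it goes --- the lattice columns do lie in the trivial component, the trivial block is the original rigidity matrix up to reparametrizing $\vec L$, and the conjecture is indeed equivalent to injectivity of every non-trivial phased block $\vec M^{\chi}$ --- but the failure is not at the genericity issue you identify as the crux. Changing the choice of orbit representatives conjugates $\vec M^{\chi}$ by diagonal phase matrices, so only the values of $\chi$ on $\rho(\HH_1(G,\Z))$ matter. If $\Lambda+\Gamma\neq\Z^2$, where $\Gamma$ is the subgroup generated by $\rho(G,\bgamma)$, then there is a character $\chi$ of $\Z^2/\Lambda$ that is non-trivial yet trivial on $\Gamma$; for such $\chi$ the phased block is gauge-equivalent to the bare vertex columns of $\vec M_{2,3,2}(G,\bgamma,\vec p,\vec L)$ and therefore contains the two-dimensional space of translations in its kernel at \emph{every} realization, generic or not. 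This is exactly the relative translation of the components of the development, and it contradicts your assertion that the obvious kernel vectors cannot be $\chi$-equivariant for $\chi\neq 1$. Your framework does correctly isolate when the conjecture could still hold, namely when $\Lambda+\langle\rho(G,\bgamma)\rangle=\Z^2$; that restricted version is essentially the open Question posed at the end of \secref{conclusions}, and is where further effort should be directed.
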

Informally, what this conjecture says is that a generic, rigid periodic framework remains so even if
we enlarge the class of allowed motions by relaxing the periodicity constraint to hold only on a sub-lattice.
Geometrically, this means just expanding the fundamental domain of the $\Z^2$-action on the plane induced by
$\vec L$.
\begin{figure}[htbp]
\centering
\subfigure[]{\includegraphics[width=.25\textwidth]{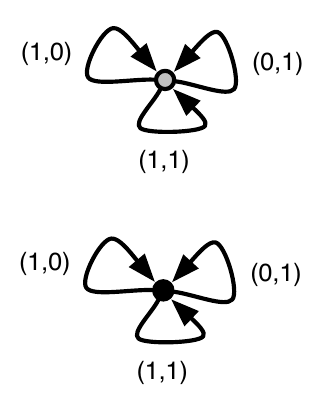}}
\subfigure[]{\includegraphics[width=.4\textwidth]{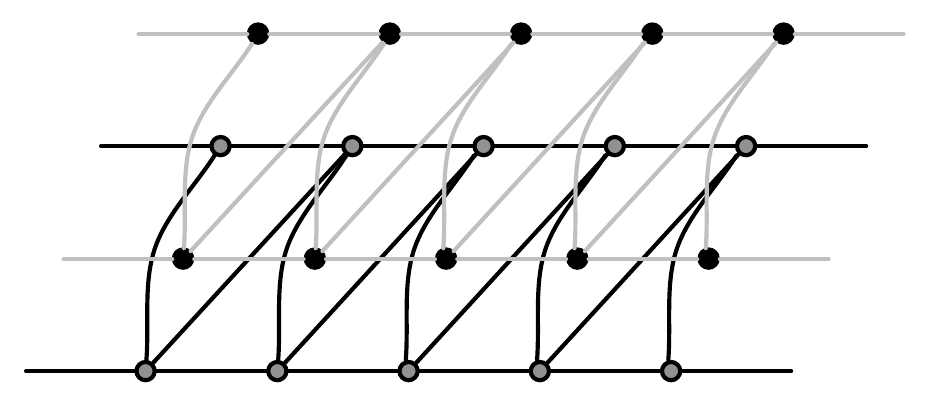}}
\caption{Restricting the $\Z^2$-action on the periodic graph from \figref{colored-laman-finite-index} (b)
to the sub-lattice generated by $(1,0)$ and $(0,2)$: (a) the resulting colored graph;
(b) the development, with connected components indicated by color.  Since the black points can
translate independently of the gray ones, any associated framework has at least these
non-trivial degrees of freedom.}
\label{fig:colored-laman-bad-sublattice}
\end{figure}
The example in \figref{colored-laman-finite-index} shows that \conjref{bob} is false, even
in a, much weaker, combinatorial version: if we take the sub-lattice to be the one generated by
$(1,0)$ and $(0,2)$, we get the periodic framework and associated colored graph in
\figref{colored-laman-bad-sublattice}. It is easy to see that the two connected components
can translate independently of each other, and that a maximal colored-Laman-sparse
subgraph is simply one of the connected components.

This counterexample generalizes.  Suppose that $(G,\bgamma)$ is a colored-Laman graph with
$n$ vertices.  The operation of passing to a sub-lattice $\Lambda$ corresponding to an index $\ell$
subgroup of $\Z^2$ in the associated periodic framework means, in combinatorial terms,
passing to an $\ell$-sheeted cover $(G^*,\bgamma^*)$ of the colored graph $(G,\bgamma)$.  Thus $G^*$
has $\ell n$ vertices and $2\ell n+\ell$ edges.

On the other hand, if $\rho(G,\bgamma)$ generates a finite index subgroup $\Gamma < \Z^2$, and we take
the corresponding sub-lattice $\Lambda$, then, by \lemref{development-connectivity}, $G^*$ has at least
two connected components, and thus any colored-Laman-sparse subgraph of $(G^*,\bgamma)$
can have at most $2\ell n + 1 - 2$ edges  which is too few to be a colored-Laman graph.  Repeating the same
construction, but with $(G,\bgamma)$ a subgraph of a colored-Laman graph $(H,\bgamma)$ with
$\rho(H,\bgamma) = \Z^2$, we see that the colored graph cover $(H^*,\bgamma)$ corresponding to the
bad sub-lattice $\Lambda$ need not be disconnected.

It would be interesting to resolve the following combinatorial question about colored graphs, which is a
kind of ``doubly generic'' version of \conjref{bob}.
\begin{question}
Let $(G,\bgamma)$ be a colored-Laman graph.  Does a generic finite-sheeted cover of $(G,\bgamma)$
that arises from passing to a sub-lattice in the development have a spanning subgraph that is colored-Laman?
\end{question}
We leave the meaning of generic intentionally vague, but it seems plausible that there are a finite
number of maximal ``bad'' sub-lattices to avoid.

\bibliographystyle{plainnat}

\end{document}